\long\def\@savemarbox#1#2{\global\setbox#1\vtop{\hsize\marginparwidth 
  \@parboxrestore\tiny\raggedright #2}}
   \def\MR#1{}  }
\newcommand{\R}{\mathbb{R}}
\newcommand{\Q}{\mathbb{Q}}
\newcommand{\CC}{\mathbb{C}}
\newcommand{\RR}{\mathbb{R}}
\newcommand{\HH}{\mathbb{H}}
\newcommand{\QQ}{\mathbb{Q}}
\newcommand{\T}{\mathcal T}
\newcommand{\A}{\mathcal A}
\newcommand{\W}{\mathcal W}
\newcommand{\vol}{{\rm vol}}
\newcommand{\voct}{{v_{\rm oct}}}
\newcommand{\vtet}{{v_{\rm tet}}}
\renewcommand{\L}{\mathcal L}
\newcommand{\toF}{\stackrel{\rm{F}}{\to}}
\renewcommand{\setminus}{{\smallsetminus}}
\newcommand{\from}{\colon\thinspace}
\newcommand{\volp}{{\rm vol}^{\perp}}
\newcommand{\volbp}{{\rm vol}^{\lozenge}}
\numberwithin{equation}{section}
\theoremstyle{plain}
\newtheorem{theorem}{Theorem}[section]
\newtheorem{corollary}[theorem]{Corollary}
\newtheorem{lemma}[theorem]{Lemma}
\newtheorem{conjecture}[theorem]{Conjecture}
\newtheorem*{namedtheorem}{\theoremname}
\newcommand{\theoremname}{testing}
\theoremstyle{definition}
\newtheorem{definition}[theorem]{Definition}
\newtheorem{question}[theorem]{Question}
\newtheorem{remark}[theorem]{Remark}
\newcommand{\refthm}[1]{Theorem~\ref{Thm:#1}}
\newcommand{\reflem}[1]{Lemma~\ref{Lem:#1}}
\newcommand{\refdef}[1]{Definition~\ref{Def:#1}}
\newcommand{\reffig}[1]{Figure~\ref{Fig:#1}}
\title[Geometry of biperiodic alternating links]%
{Geometry of biperiodic alternating links}
\author[A.\ Champanerkar]{Abhijit Champanerkar}
\address{Department of Mathematics, College of Staten Island \& The Graduate Center, City University of New York, New York, NY}
\email{abhijit@math.csi.cuny.edu}
\author[I. \ Kofman]{Ilya Kofman}
\address{Department of Mathematics, College of Staten Island \& The Graduate Center, City University of New York, New York, NY}
\email{ikofman@math.csi.cuny.edu}
\author[J. \ Purcell]{Jessica S.\ Purcell}
\address{School of Mathematical Sciences, 9 Rainforest Walk, Monash University, Victoria 3800, Australia}
\email{jessica.purcell@monash.edu}
\begin{document}

\begin{abstract}
A biperiodic alternating link has an alternating quotient link in the
thickened torus. In this paper, we focus on semi-regular links, a
class of biperiodic alternating links whose hyperbolic structure can
be immediately determined from a corresponding Euclidean
tiling. Consequently, we determine the exact volumes of semi-regular
links. We relate their commensurability and arithmeticity to the
corresponding tiling, and assuming a conjecture of Milnor, we show
there exist infinitely many pairwise incommensurable semi-regular
links with the same invariant trace field. We show that only two
semi-regular links have totally geodesic checkerboard surfaces; these
two links satisfy the Volume Density Conjecture.  Finally, we give
conditions implying that many additional biperiodic alternating links
are hyperbolic and admit a positively oriented, unimodular geometric
triangulation.  We also provide sharp upper and lower volume bounds
for these links.
\end{abstract}

\maketitle

\section{Introduction}

It is well known, due to work of Menasco in the 1980s, that there
exists a hyperbolic structure on the complement of any prime
alternating link in $S^3$ that is not a $(2,q)$-torus link
\cite{menasco}. The alternating diagram also provides a natural
explicit decomposition of the link complement into two ideal
\emph{checkerboard polyhedra} with faces identified
\cite{menasco:polyhedra, ALR:polyhedra}. Aitchison and Reeves
\cite{aitchison_reeves} studied alternating links for which these
combinatorial polyhedra can be realized directly as ideal hyperbolic
polyhedra that can be glued together to obtain the complete hyperbolic
structure on the link complement. They called such links
\emph{completely realizable}, and they described a large
family of completely realizable alternating links, called Archimedean
links.  Because the geometry of a completely realizable link is
explicit, and matches the combinatorics of the polyhedral
decomposition, it is easy to determine various geometric properties
from its diagram, such as its exact hyperbolic volume. 
However, there are only finitely many Archimedean links.

There are also a few known infinite families of completely realizable alternating links in $S^3$, built from prisms and antiprisms; these are mentioned in \cite{aitchison_reeves}, and described in some detail in Thurston's notes \cite[Section~6.8]{thurston:notes}. As far as we are aware, these are the only possibilities for completely realizable alternating links in the 3-sphere. 

In this paper we show that, in contrast, there are many infinite families of alternating links in the thickened torus $T^2\times I$ that are completely realizable, for an analogous notion of
a polyhedral decomposition.  Moreover, the geometric structure of
these links, called semi-regular links, can be immediately determined
from a corresponding Euclidean tiling. Thus geometric invariants, such
as hyperbolic volume, arithmeticity and commensurability, can be
computed directly from this tiling.

When lifted to the universal cover of $T^2\times I$, a link $L$ in
$T^2\times I$ becomes a biperiodic link $\L$ in $\R^3$. Conversely,
any biperiodic alternating link $\L$ is invariant under translations
by a two-dimensional lattice $\Lambda$, such that $L=\L/\Lambda$ is an
alternating link in $T^2\times I$. Thus, equivalently, we prove that
infinitely many biperiodic alternating links are completely
realizable, and we determine their geometric properties. Note that the
hyperbolic structures of these completely realizable biperiodic
alternating links were discovered independently by Adams, Calderon,
and Mayer \cite{adams:tilings}.

In addition, for a large class of biperiodic links whose quotient
admits a certain kind of alternating diagram on the torus, we find a
hyperbolic structure on the complement with a possibly incomplete
\emph{geometric triangulation}. That is, the complement decomposes
into tetrahedra that are all positively oriented with positive
volume. It is still unknown whether or not every link in $S^3$ admits
a geometric triangulation.  Moreover, we prove that this triangulation
is unimodular, and we find sharp lower and upper bounds for the
hyperbolic volume.  See Theorem~\ref{Thm:hyperbolic}.

The results on complete realizability have a number of interesting
consequences:

\subsubsection*{Exact volume and cusp shapes.}
We compute exact volumes for semi-regular links in terms of their
corresponding Euclidean tiling. Their cusp shapes can also be computed
from this tiling. See Theorem~\ref{Thm:BALvol} and
Corollary~\ref{Cor:CuspShapes}.

\subsubsection*{Commensurability and arithmeticity.}
Recall that two manifolds are \emph{commensurable} if they admit a
common finite sheeted cover. The \emph{trace field} of a hyperbolic
manifold $\HH^3/\Gamma$ is the smallest field containing the traces of
elements, and it is known to be a commensurability invariant of link
complements. However, families of links are known to be pairwise
incommensurable and yet have the same trace field,
e.g.\ \cite{ChesebroDeblois}. Using the geometry of semi-regular
links, we show that this phenomenon also holds for such links.
Infinitely many of them have trace field $\QQ(i,\sqrt{3})$ but are
pairwise incommensurable, assuming a conjecture of Milnor on the
Lobachevsky function.  Conversely, we also find infinitely many such
links that are commensurable to the figure-8 knot complement, and one
commensurable to the Whitehead link.  See Theorem~\ref{Thm:kM}.

\subsubsection*{Totally geodesic checkerboard surfaces.}
It is an open question whether any alternating knot admits two totally
geodesic checkerboard surfaces. Only a few links in $S^3$ are known to
admit totally geodesic checkerboard surfaces, including the Borromean
rings \cite{AdamsSchoenfeldII}. Pretzel knots $K(p,p,p)$ admit one
totally geodesic checkerboard surface, but not two
\cite{AdamsSchoenfeldI}.  Our geometric structures allow us to show
that two semi-regular links have totally geodesic checkerboard
surfaces, namely the infinite square weave and the triaxial link.
However, we prove that no other semi-regular alternating links have
this property. See Theorem~\ref{Thm:square_triaxial}.

\subsubsection*{Volume Density Conjecture.}
The volume density of a link in $S^3$ or $T^2\times I$ is defined to
be the ratio of its volume to crossing number. For a biperiodic
alternating link, the volume density is that of its quotient link in
$T^2\times I$.  In previous work, the authors showed that if a
sequence of knots or links in $S^3$ converges in an appropriate sense
(see Definition~\ref{def:folner_converge}) to the infinite square
weave, then their volume densities approaches the volume density of
the infinite square weave \cite{ckp:gmax}.  The \emph{Volume Density
  Conjecture} (Conjecture~\ref{conj:vol_density}) is that the same
result holds for any biperiodic alternating link.  In this paper, we
prove the Volume Density Conjecture for the triaxial link, which has
consequences for volume and determinant, as in \cite{ck:det_mp,
  ckp:density, ckp:gmax}.  See Theorem~\ref{Thm:triaxial}.

\

Elsewhere, biperiodic links have been called {\em tiling
  links}~\cite{adams:tilings}, {\em textile links}~\cite{BMOP} and
{\em textile structures}~\cite{morton-grishanov}.

\subsection{Organization}

In Section~\ref{sec:torihedra}, we describe the decomposition of the
link complement $(T^2\times I)-L$. In Section~\ref{sec:semiregular},
we prove that semi-regular links are completely realizable, and their
geometric structure and hyperbolic volume is determined by the
corresponding Euclidean tiling.  In Section~\ref{sec:arithmetic}, we
relate the geometry of the tiling to the commensurability,
arithmeticity and invariant trace fields of the corresponding
semi-regular links.  Sections~\ref{sec:triaxial}
and~\ref{sec:asymptotic} focus on two special links, namely the square
weave and the triaxial link. In Section~\ref{sec:triaxial}, we prove
that these are the only two semi-regular links that have totally
geodesic checkerboard surfaces.  In Section~\ref{sec:asymptotic}, we
prove the Volume Density Conjecture for the triaxial link.  Finally,
Section~\ref{sec:hyperbolic} is more broad. We prove that if any link
$L$ in $T^2\times I$ admits a certain kind of alternating diagram on
the torus, not just semi-regular, then $(T^2\times I)-L$ is hyperbolic
and admits a positively oriented, geometric triangulation.  We then
find sharp lower and upper bounds for the hyperbolic volume of these
links.

\subsection*{Acknowledgements}
We thank Colin Adams for useful discussions, and note that part (1) of
Theorem~\ref{Thm:BALvol} was proved independently in
\cite{adams:tilings}.  We thank the organizers of the workshops {\em
  Interactions between topological recursion, modularity, quantum
  invariants and low-dimensional topology} at MATRIX and {\em
  Low-dimensional topology and number theory} at MFO (Oberwolfach),
where part of this work was done.  We thank Tom Ruen, whose figures at
\cite{semi-regular-wiki} were very helpful for this project.  The
first two authors acknowledge support by the Simons Foundation and
PSC-CUNY. The third author acknowledges support by the Australian
Research Council.

\section{Torihedra}
\label{sec:torihedra}

Let $I=(-1,1)$. Let $L$ be a link in $T^2\times I$ with an alternating
diagram on $T^2\times\{0\}$, projected to the $4$--valent graph
$G(L)$.  First, we eliminate a few simple cases. If $G(L)$ is
contained in a disk in $T^2\times\{0\}$, then the link complement will
be reducible, with an essential sphere enclosing a neighborhood of the
disk. A similar argument shows that if a complementary region of
$G(L)$ is a punctured torus, the link complement is reducible. Hence,
we may assume that all complementary regions in
$(T^2\times\{0\})\setminus G(L)$ are disks or annuli.  We will say the
diagram is \emph{cellular} if the complementary regions are disks. We
call the regions the \emph{faces} of $L$ or of $G(L)$.  Similarly, if
$\L$ is the biperiodic link with quotient link $L$, then the faces of
$\L$ refer to the complementary regions of its diagram in $\R^2$,
which are the regions $\R^2-G(\L)$.  We will say the diagram of $L$ on
$T^2\times\{0\}$ is \emph{reduced} if four distinct faces meet at
every crossing of $G(\L)$ in $\R^2$.  Note that a reduced cellular
diagram has at least one crossing in $T^2\times\{0\}$, and an
alternating, reduced, cellular diagram has at least two crossings.
Throughout the paper, we will work with diagrams on $T^2\times \{0\}$
that are alternating, reduced and cellular.

\begin{definition}\label{Def:torihedron}
A \emph{torihedron} is a cone on the torus, i.e. $T^2\times
[0,1]/(T^2\times\{1\})$, with a cellular graph $G$ on
$T^2\times\{0\}$.  An \emph{ideal torihedron} is a torihedron with the
vertices of $G$ and the vertex $T^2\times\{1\}$ removed.  Hence, an
ideal torihedron is homeomorphic to $T^2\times [0,1)$ with a finite
  set of points (ideal vertices) removed from $T^2\times\{0\}$.
\end{definition}

Let $M=(T^2\times I)-L$ be the link complement.  The checkerboard
polyhedral decomposition of the complement of an alternating link in
$S^3$, as in \cite{menasco:polyhedra, ALR:polyhedra}, can be
generalized to that of an alternating link in $T^2\times I$. This was
done for a single example in \cite{ckp:gmax}, and much more generally
in \cite{HowiePurcell}.  The following version is appropriate for the
links considered here:

\begin{theorem}\label{Thm:Torihedra}
Let $L$ be a link in $T^2\times (-1,1)$ with a reduced, cellular,
alternating diagram on $T^2 \times \{0\}$.  Then the link complement
$M = (T^2\times I)-L$ admits a decomposition into two ideal torihedra
$M_1$ and $M_2$ with the following properties:
\begin{enumerate}
\item $M_1$ is homeomorphic to $T^2\times [0,1)$ and $M_2$ is
  homeomorphic to $T^2\times(-1,0]$, each with finitely many vertices
  removed from $T^2\times\{0\}$. The points $T^2\times\{\pm 1\}$ are
  ideal vertices denoted by $\pm\infty$.
\item $T^2\times\{0\}$ on $M_1$ and $M_2$ is labeled with the diagram
  graph $G(L)$, with vertices removed. Thus, the graph has 4-valent
  ideal vertices.
\item $M$ is obtained by gluing $M_1$ to $M_2$ along their faces on
  $T^2\times \{0\}$. Faces of $G(L)$ are checkerboard colored, and
  glued via a homeomorphism that rotates the boundary of each face by
  one edge in a clockwise or anti-clockwise direction, depending on
  whether the face is white or shaded.
\item Each edge class contains exactly four edges, and these edges are
  the crossing arcs of the link complement. At each ideal vertex, a
  pair of opposite edges are identified, in both $M_1$ and $M_2$, with
  the opposite pair identified at the same vertex in $M_1$ and
  $M_2$. See Figure~\ref{Fig:4valent-polyhedron}.
\end{enumerate}
\end{theorem}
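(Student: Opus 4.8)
The plan is to adapt the classical Menasco--Thurston checkerboard decomposition of an alternating link in $S^3$ to the setting of $T^2\times I$, following the blueprint of \cite{menasco:polyhedra, ALR:polyhedra} and the generalization in \cite{HowiePurcell}. First I would set up the two torihedra: thicken the diagram surface $T^2\times\{0\}$ slightly to $T^2\times[-\epsilon,\epsilon]$ (after an isotopy pushing all crossings into a small neighborhood of $T^2\times\{0\}$), and consider the two pieces of $M$ cut along $T^2\times\{0\}$ away from the crossings. The piece $M_1$ on the positive side is homeomorphic to $T^2\times[0,1)$ with the over-strand of the link pushed up into its interior; collapsing the "far" end $T^2\times\{1\}$ to a single ideal vertex $+\infty$ and noting that near each crossing the complementary structure is exactly a cone, one sees that $M_1$ is an ideal torihedron whose boundary pattern on $T^2\times\{0\}$ is the graph $G(L)$ with $4$--valent ideal vertices at the crossings (item (1), (2)). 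The symmetric statement gives $M_2$ on the negative side with ideal vertex $-\infty$.

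Next I would pin down the face identifications of item (3). This is where the alternating hypothesis enters. As in Menasco's argument, the complement of a neighborhood of the link in $T^2\times I$ deformation retracts onto the two-complex obtained from $T^2\times\{0\}$ by collapsing overpasses and underpasses; at each crossing the "flattened" picture of the two sheets near an alternating crossing forces the white face on one side to match the white face on the other side after a rotation by one edge about each ideal vertex, with the direction of rotation (clockwise vs.\ anticlockwise) determined by the checkerboard color. I would verify this locally at a single crossing using the standard alternating-crossing diagram (cf.\ \cite[Figure ...]{menasco:polyhedra}), then observe that it propagates consistently over all of $T^2\times\{0\}$ because the diagram is alternating; cellularity guarantees every face is a disk, so the gluing homeomorphism is determined by its action on each face boundary.

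Finally, for item (4) I would track the edges. Each crossing of $G(L)$ contributes a single crossing arc of the link complement; in the torihedral decomposition this arc appears as an edge, and the standard count shows each edge class has exactly four representatives --- two in $M_1$ and two in $M_2$ --- arising from the four quadrants around the crossing. The statement about opposite edges being identified at each ideal vertex, consistently in $M_1$ and $M_2$, follows from examining the two-sheeted local picture at a crossing: the two edges of the torihedron emanating "downward" from the cone vertex at a given crossing are the two diagonal crossing arcs, and the face gluing (rotation by one edge) identifies them in opposite pairs. The local model is exactly that of Figure~\ref{Fig:4valent-polyhedron}, and since the diagram is reduced (four distinct faces at each crossing) there are no degenerate identifications to worry about.

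The main obstacle I anticipate is not any single step but the bookkeeping needed to check that the local alternating-crossing model globalizes correctly on the torus --- in particular, verifying that the "rotate by one edge" gluing is well-defined and consistent around faces that may wrap nontrivially in $T^2$, and confirming that no face is identified to itself in a way that breaks the torihedron structure (this is precisely what the reduced and cellular hypotheses rule out). Since \cite{HowiePurcell} handles the general case, I would lean on that reference for the technical heart and restrict the argument here to the verification that their hypotheses are met by any reduced, cellular, alternating diagram on $T^2\times\{0\}$, together with spelling out the explicit form of the decomposition in items (1)--(4).
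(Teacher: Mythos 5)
Your proposal matches the paper's proof in both strategy and substance: the paper likewise proceeds exactly as in Menasco's checkerboard decomposition, replacing the two $3$--balls with the two thickened tori $T^2\times[0,1)$ and $T^2\times(-1,0]$, and observes that the face and edge identifications are entirely local at each crossing, so the construction carries over verbatim to the toroidal setting. Your additional attention to the global consistency of the ``rotate by one edge'' gluing is reasonable but not an obstacle, precisely because (as the paper notes) the identification is determined locally at each crossing and the reduced, cellular hypotheses ensure the faces are disks with no degenerate identifications.
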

We will refer to $M_1$ and $M_2$ as the upper and lower torihedra of
$L$.  When there is no ambiguity, the torihedron will be denoted by
$P(L)$.

\begin{proof}[Proof of Theorem \ref{Thm:Torihedra}]
We proceed exactly as in \cite{menasco:polyhedra}, but instead of two
3-balls, we start with a decomposition of $T^2\times I$ into two
thickened tori $T^2 \times [0,1)$ and $T^2 \times (-1,0]$.  The
torihedra are formed by cutting along checkerboard surfaces; an arc of
intersection between such surfaces becomes an ideal edge of the
decomposition.  The form of the torihedra near a crossing is
illustrated in \reffig{4valent-polyhedron}.  Since $L$ has an
alternating, reduced and cellular diagram, the 1-skeleton of each
torihedron is the same as $G(L)$, and its vertices are ideal
(removed).  The remainder of the construction for the cell
decompostion and the face identification in \cite{menasco:polyhedra}
is completely local at every crossing, and hence goes through likewise
in the toroidal case.
\end{proof}

\begin{figure}
  \centering
  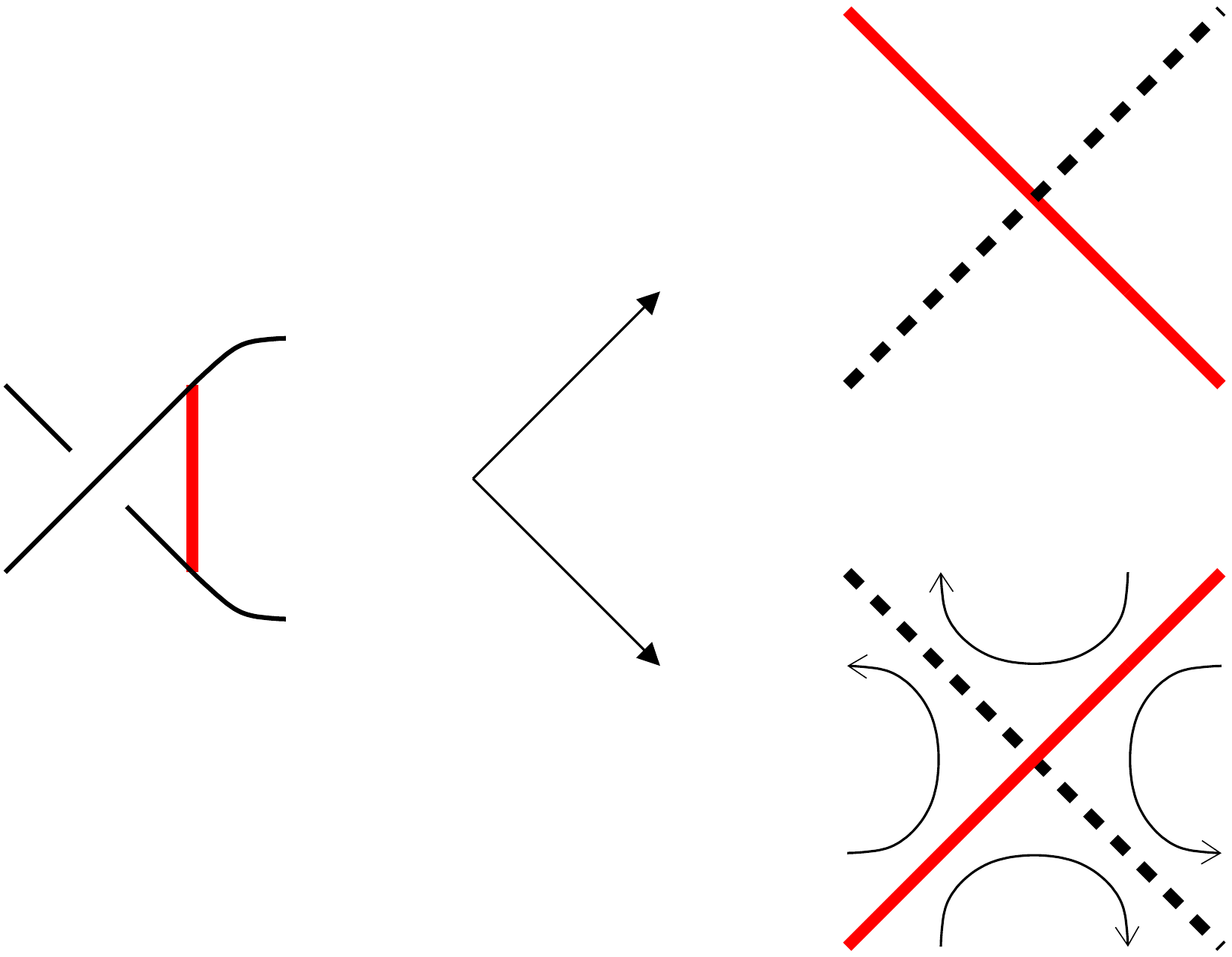
  \caption{Crossing edge $e$ at a $4$--valent vertex, and
    corresponding copies of $e$ on the upper and lower
    torihedra. The arrows on the lower diagram indicate the direction
    of twisting for gluing faces.}
  \label{Fig:4valent-polyhedron}
\end{figure}

These decompositions can be extended to biperiodic alternating links.

\begin{theorem}\label{Thm:BiperiodicDecomp}
Let $\L$ be a biperiodic alternating link whose quotient link $L$ in
$T^2\times I$ has a reduced, cellular, alternating diagram on
$T^2\times\{0\}$.  Then $\R^3-\L$ admits a decomposition into two
identical infinite ideal polyhedra.  Each infinite polyhedron $P(\L)$
is homeomorphic to $\R^2\times [0,\infty)$, with a biperiodic planar
graph identical to $G(\L)$ in $\R^2\times 0$ with vertices removed.
The polyhedra are glued via a homeomorphism determined by the
checkerboard coloring of faces of $G(\L)$ as in \refthm{Torihedra}.
\end{theorem}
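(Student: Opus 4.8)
The plan is to lift the torihedral decomposition of Theorem~\ref{Thm:Torihedra} through the universal cover of $T^2 \times I$, and to check that the lift has the claimed structure. First I would fix the covering map $p \co \R^3 \to T^2\times I$ induced by the lattice $\Lambda$ acting on $T^2 \times \{0\} = \R^2/\Lambda$, noting $p$ restricts to the universal cover $\R^3 - \L \to (T^2\times I) - L$ since $\L = p^{-1}(L)$. Theorem~\ref{Thm:Torihedra} gives $M = (T^2\times I)-L = M_1 \cup M_2$ with $M_1 \cong T^2\times[0,1)$ and $M_2 \cong T^2\times(-1,0]$ (ideal vertices removed), glued along $T^2\times\{0\}$. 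Pulling back this decomposition along $p$, the preimage $P_1 = p^{-1}(M_1)$ is homeomorphic to $\R^2\times[0,\infty)$ — here one must observe that the single ideal point $+\infty = T^2\times\{1\}$, which is a cone point of $M_1$, lifts to the single ideal point $\R^2\times\{\infty\}$ of $P_1$, since collapsing $T^2\times\{1\}$ to a point is compatible with collapsing $\R^2 \times \{\infty\}$ to a point (the cone on $T^2$ pulls back to the cone on $\R^2$); similarly $P_2 = p^{-1}(M_2) \cong \R^2 \times (-\infty, 0]$. The punctures of $M_1$ on $T^2\times\{0\}$, i.e.\ the vertices of $G(L)$, lift to the vertices of $G(\L)$, so the $1$--skeleton of $P_1$ on $\R^2\times\{0\}$ is exactly $G(\L)$ with vertices removed, as claimed. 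By the symmetry in Theorem~\ref{Thm:Torihedra}(1) between $M_1$ and $M_2$, the two infinite polyhedra $P_1, P_2$ are identical, giving the single model $P(\L)$.

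Next I would verify that the face-pairing survives the lift. The gluing $M = M_1 \cup_\phi M_2$ of Theorem~\ref{Thm:Torihedra}(3) is along a homeomorphism $\phi$ of $T^2\times\{0\}$ that is defined \emph{locally}: it rotates the boundary of each face of $G(L)$ by one edge, clockwise or anti-clockwise according to the checkerboard color. Since $G(\L)$ is a $\Lambda$--invariant lift of $G(L)$ and the checkerboard coloring of $G(L)$ lifts to the (unique) $\Lambda$--invariant checkerboard coloring of $G(\L)$, the pairing $\phi$ lifts to a $\Lambda$--equivariant homeomorphism $\widetilde\phi$ of $\R^2\times\{0\}$ with the same local description: each face of $G(\L)$ is rotated by one edge in the direction prescribed by its color. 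Gluing $P_1$ to $P_2$ along $\widetilde\phi$ produces a space covering $M$, hence homeomorphic to $\R^3 - \L$; equivalently, one checks directly that the edge classes, which are local at crossings by Theorem~\ref{Thm:Torihedra}(4), lift to edge classes of the same combinatorial type (opposite edges identified at each ideal vertex), so the glued complex is a manifold and the obvious map to $M$ is the covering $p$. Either way $\R^3 - \L = P(\L) \cup_{\widetilde\phi} P(\L)$.

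The step I expect to require the most care is the claim that $P_1 = p^{-1}(M_1)$ is genuinely homeomorphic to $\R^2\times[0,\infty)$ with \emph{only one} ideal point at infinity, rather than something with more complicated end behavior — in other words, that the cone structure on $M_1$ behaves well under the infinite-sheeted pullback. The point is that $M_1$ deformation retracts onto a neighborhood of its cone point $+\infty$, so $M_1 \cong (T^2\times[0,1))$ rel the puncture structure, and the covering $p^{-1}(M_1)\to M_1$ is the cover corresponding to $\Lambda \le \pi_1(T^2\times[0,1)) = \pi_1(T^2) = \Lambda'$ — but $\Lambda$ is a finite-index... in fact full-rank sublattice, so $p^{-1}(M_1)$ is the universal cover, which is $\R^2\times[0,1) \cong \R^2\times[0,\infty)$, and the single end/cone point is preserved because the cone point of $T^2\times[0,1]/(T^2\times\{1\})$ has a neighborhood with fundamental group all of $\Lambda'$, hence lifts connectedly to a single point. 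Once this is pinned down, the rest is a routine bookkeeping of how the vertex links, edge classes, and face pairings descend, all of which are local and already handled by Theorem~\ref{Thm:Torihedra}.
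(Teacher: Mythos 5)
Your proposal takes exactly the paper's approach --- the paper's entire proof is the single sentence ``Lift the torihedral decomposition of Theorem~\ref{Thm:Torihedra} to the cover $\RR^3-\L$ of $(T^2\times I)-L$'' --- and your fleshed-out version of that lift (pullback of $M_1$, $M_2$, equivariant lift of the face pairing, single cone point upstairs) is correct. One minor terminological slip: $\R^3-\L\to(T^2\times I)-L$ is the $\ZZ^2$--cover induced by the universal cover $\R^3\to T^2\times I$, not the universal cover of the link complement itself (whose fundamental group also contains meridians), but nothing in your argument actually uses simple connectivity, so this does not affect the proof.
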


\begin{proof}
Lift the torihedral decomposition of \refthm{Torihedra} to the cover
$\RR^3-\L$ of $(T^2\times I)-L$.
\end{proof}

\subsection{Geometric torihedra and stellation}

We now wish to determine conditions that guarantee the torihedra admit
a hyperbolic structure. In \cite{HowiePurcell} it was shown that if
such a link on $T^2\times I$ is weakly prime (see
Definition~\ref{Def:WeaklyPrime}) then the link complement is
hyperbolic. However, the torihedra themselsves may not admit a
hyperbolic structure. For example, if $G(\L)$ or, equivalently, $G(L)$
has bigon faces, there is no way the torihedra and polyhedra as
constructed can admit a hyperbolic structure. While geometric
information can be obtained even if the torihedra are not hyperbolic,
we will obtain much stronger results when they are. Thus we will
modify the torihedra by collapsing bigons.

\begin{definition}\label{Def:tiling}
Let $\L$ be a biperiodic alternating link whose quotient link $L$ in
$T^2\times I$ has a reduced, cellular, alternating diagram on
$T^2\times\{0\}$.  We define the graph $T_L$ on $T^2\times\{0\}$ by
collapsing all bigons of the diagram graph $G(L)$, as shown in
\reffig{bigons}.  If $L$ has no bigons, then $T_L=G(L)$.  By lifting
to the cover, we obtain a graph denoted $T_{\L}$ on
$\RR^2\times\{0\}\subset\RR^3$ by collapsing all bigons of $G(\L)$.
\end{definition}

\begin{lemma}\label{Lem:BigonCollapse}
Let $L$ be a link in $T^2\times I$ with a reduced, cellular,
alternating diagram on $T^2\times\{0\}$. Let $M_1$ and $M_2$ be the
torihedra in \refthm{Torihedra}. Collapse each bigon in $M_1$ and
$M_2$. Then $M=(T^2\times I)-L$ has a decomposition into ideal
torihedra with the following properties:
\begin{enumerate}
\item $T^2\times\{0\}$ on $M_1$ and $M_2$ contains a graph $T_L$ as in
  \refdef{tiling} with its vertices removed.  Thus, the valences of
  ideal vertices are $4$, $3$, and $2$, depending on whether the
  corresponding vertex of $G(L)$ is adjacent to $0$, $1$, or $2$
  bigons.
\item $M$ is obtained by gluing faces on $M_2$ to faces on $M_1$ by a
  homeomorphism that rotates the boundary of each face by a single
  rotation in one edge, clockwise or anti-clockwise depending on
  whether the same face in $G(L)$ is white or shaded.
\item Each edge class corresponds to a crossing arc as before, but
  crossing arcs associated to crossings adjacent across a bigon are
  identified into a single edge. Thus if a crossing is adjacent to no
  bigons, the corresponding edge has degree four. If it is adjacent to
  a twist region with $n$ bigons, the corresponding edge has degree
  $4+2n$.
\end{enumerate}

By lifting to the cover, a similar decomposition holds for $\RR^3-\L$,
with the graph $T_{\L}$ on $\RR^2\times\{0\}\subset\RR^3$.
\end{lemma}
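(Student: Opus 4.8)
The plan is to adapt the proof of \refthm{Torihedra} so that the only extra work is understanding the purely local effect of collapsing a bigon on each of the two torihedra. First I would recall that, before collapsing, \refthm{Torihedra} gives $M = M_1 \cup M_2$ with $1$--skeleton $G(L)$ on $T^2\times\{0\}$ (vertices removed), four edges per edge class, and the ``single rotation'' face gluing. Collapsing a bigon is an isotopy of the pair $(M_i, G(L))$: a bigon face of the diagram bounds a flat disk in the checkerboard surface, and the two crossing arcs on its two sides can be pushed together. Concretely, I would describe the move shown in \reffig{bigons} as replacing the two $4$--valent vertices at the ends of the bigon's two edges by a single merged configuration, which on each torihedron lowers the valence of the affected ideal vertices by one per adjacent bigon; iterating through a twist region of $n$ bigons merges everything down appropriately. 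This gives (1): an ideal vertex of $T_L$ sits where $G(L)$ had $0$, $1$, or $2$ adjacent bigons, and its valence is $4$, $3$, or $2$ respectively (a vertex adjacent to two bigons is exactly a ``clasp'' and becomes $2$--valent, consistent with $T_L$ being the tiling graph with no bigons).

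Next I would check that the face gluing is unaffected away from the collapsed bigons, so (2) is just the restriction of the gluing in \refthm{Torihedra}(3) to the non-bigon faces: each surviving face is still glued to its checkerboard partner by a single clockwise or anti-clockwise edge-rotation, with the direction determined by the white/shaded coloring. The point here is that collapsing a bigon deletes a pair of faces (the bigon on $G(L)$ appears as a bigon face on \emph{both} torihedra, and these two bigon faces are glued to each other), so no gluing data for the remaining faces changes; one only needs to note that the collapse does not create any new identifications among the surviving faces. For (3), I would trace what happens to edge classes: before collapsing, each crossing contributes an edge class of degree four (two copies of the crossing arc in $M_1$, two in $M_2$, matching \refthm{Torihedra}(4) and \reffig{4valent-polyhedron}). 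When two crossings are adjacent across a bigon, the bigon collapse identifies their crossing arcs; propagating along a twist region with $n$ bigons (hence $n+1$ crossings) fuses all of these into one edge, and the degree count is $4$ from the first crossing plus $2$ for each additional bigon, giving $4+2n$ — I would verify this by the local picture at one bigon collapse (each collapsed bigon contributes its two ``bigon edges'', one from each torihedron, i.e.\ $+2$) and then induct on $n$.

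Finally, the lift to $\RR^3 - \L$ is immediate: the bigon collapse of $G(L)$ lifts to the bigon collapse of $G(\L)$ equivariantly under $\Lambda$, so applying the same collapse to the polyhedra of \refthm{BiperiodicDecomp} yields the analogous decomposition with $T_\L$ on $\RR^2\times\{0\}$, exactly as in the proof of \refthm{BiperiodicDecomp}.

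I expect the main obstacle to be item (3): carefully bookkeeping the edge identifications through a twist region, in particular confirming that collapsing successive bigons really does merge all $n+1$ crossing arcs into a single edge class of degree $4+2n$ rather than producing several shorter classes, and checking this is consistent simultaneously in $M_1$ and $M_2$ with the opposite-edge identifications at the (now lower-valence) ideal vertices from \refthm{Torihedra}(4). The cleanest way to handle this is to do the computation for a single bigon collapse — tracking the two bigon faces, their four boundary edges, and the two crossing arcs being merged — and then observe that the configuration after one collapse is again of the same form (a reduced alternating diagram with one fewer bigon), so the general statement follows by induction on the number of bigons, with \refdef{tiling} describing the end result.
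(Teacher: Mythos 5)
Your proposal is correct and follows essentially the same route as the paper: the paper's proof simply traces the local effect of collapsing a bigon in each torihedron (as in the $S^3$ case, illustrated in \reffig{bigons}), which is exactly what you do, just with the valence and degree bookkeeping (the $4+2n$ count and the drop in vertex valence per adjacent bigon) written out explicitly and the lift to $\RR^3-\L$ handled as in \refthm{BiperiodicDecomp}.
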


\begin{proof}
The proof is by tracing the result of collapsing a bigon in torihedra
$M_1$ and $M_2$, similar to the process for collapsing bigons in the
polyhedral decomposition of an alternating link in $S^3$. This is
illustrated in \reffig{bigons}.
\end{proof}
  
\begin{figure}
  \centering
  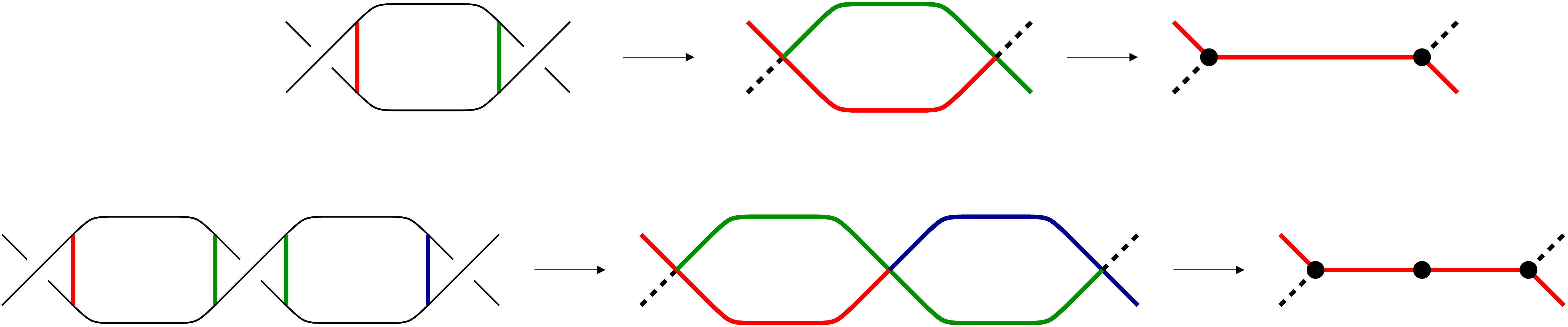
\caption{ Top: Collapsing a bigon creates an edge with 3--valent
  endpoints.  Bottom: Collapsing a sequence of bigons creates a chain
  of edges with 3--valent end vertices.}
  \label{Fig:bigons}
\end{figure}

We now turn the torihedral decomposition into a triangulation.

\begin{lemma}\label{Lem:Stellation}
Let $L$ be a link in $T^2\times I$ with a reduced, cellular,
alternating diagram on $T^2\times\{0\}$.  Then the link complement
$M=(T^2\times I)-L$ admits a decomposition into ideal tetrahedra with
the following properties:
\begin{enumerate}
\item Edges are labeled as \emph{horizontal}, \emph{vertical}, and
  \emph{stellating}. Each horizontal edge corresponds to an edge of
  $T_L$, and is identified to other horizontal edges. Each vertical
  edge runs from an ideal vertex of $M_1$ or $M_2$ to
  $\pm\infty$. Each stellating edge runs through the center of a
  non-triangular face in the complement of $T_L$, from $-\infty$ to
  $+\infty$.
\item For $n\geq 3$, each $n$--gon region in the complement of $T_L$
  on $T^2\times\{0\}$ is divided into $n$ ideal tetrahedra.
\end{enumerate}
\end{lemma}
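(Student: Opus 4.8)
The plan is to start from the torihedral decomposition of $M$ obtained after collapsing bigons (Lemma~\ref{Lem:BigonCollapse}), whose upper and lower pieces $M_1$ and $M_2$ each carry the graph $T_L$ on $T^2\times\{0\}$ with all faces of size $\geq 3$, and subdivide each torihedron into ideal tetrahedra by \emph{stellation} (coning). First I would fix attention on a single torihedron, say $M_1\cong T^2\times[0,1)$ with ideal vertex $+\infty$ at the top and the ideal vertices of $T_L$ on $T^2\times\{0\}$. For each face $F$ of $T_L$ that is an $n$-gon with $n\geq 3$, I would place a new stellating vertex at $+\infty$ already serving as the apex and cone $F$ to it, cutting the resulting pyramid over $F$ into $n$ tetrahedra by also coning $F$ to its own ``center'': concretely, introduce a stellating edge through the center of $F$ running vertically, then the $n$ triangles of the barycentric-type subdivision of $F$ each span a tetrahedron with the stellating edge and the apex. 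Doing this compatibly in $M_1$ and $M_2$ — so that the stellating edge through the center of a face runs all the way from $-\infty$ to $+\infty$ — glues the two cones along $T^2\times\{0\}$ and produces the claimed global triangulation. The edge labels then fall out of the construction: horizontal edges are the edges of $T_L$; vertical edges join an ideal vertex of $T_L$ to $\pm\infty$; stellating edges are the new central edges through non-triangular faces. Each $n$-gon face contributes exactly $n$ tetrahedra in $M_1$ and $n$ in $M_2$, but these are glued in pairs across $T^2\times\{0\}$, giving $n$ tetrahedra per $n$-gon as stated in (2).

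The main steps, in order, are: (i) recall from Lemma~\ref{Lem:BigonCollapse} that after collapsing bigons every face of $T_L$ has at least three sides, so coning to a center is combinatorially sensible and produces genuine (non-degenerate) cells; (ii) describe the cone on a single $n$-gon face in one torihedron, obtaining a ``drum'' over the $n$-gon with apex $\pm\infty$, and cut it into $n$ tetrahedra using one stellating edge and $n$ new ``radial'' faces; (iii) check that triangular faces of $T_L$ need no stellating edge — the drum over a triangle with apex is already a single tetrahedron — which is why the statement restricts stellating edges to non-triangular faces; (iv) verify that the face identifications of Lemma~\ref{Lem:BigonCollapse}(2), which rotate each face of $G(L)$ by one edge, carry the radial subdivision of a white face to the radial subdivision of the corresponding shaded face, so the two torihedral triangulations match up along $T^2\times\{0\}$ after the one-edge rotation; (v) assemble the global list of tetrahedra and read off the edge classes, confirming that each stellating edge is a single class running $-\infty$ to $+\infty$ and each horizontal edge is a class of edges of $T_L$.

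The step I expect to be the main obstacle is (iv): making sure the combinatorial subdivision is \emph{equivariant} enough that the twisting face-gluing of Lemma~\ref{Lem:BigonCollapse} descends to a gluing of tetrahedra, rather than forcing the stellating vertex of a face to be identified inconsistently with itself or with a vertex of $T_L$. The key observation is that the gluing homeomorphism rotates each face by exactly one edge, so it permutes the $n$ ``radial'' triangles of the $n$-gon cyclically and fixes the central stellating point; thus the subdivision is preserved up to a cyclic relabeling, and the tetrahedra glue in matched pairs. One must also track that a vertex of $T_L$ that became $3$- or $2$-valent after bigon collapse still has the radial triangles around it behaving correctly — but since those low-valence vertices sit on faces of size $\geq 3$ (the bigons having been collapsed away), the local picture at each such vertex is still a fan of triangles and causes no degeneracy. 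Everything else — that the pieces are honestly tetrahedra, that they cover $M$, that the count is $n$ per $n$-gon — is a routine bookkeeping check once the single-face picture and its equivariance under the face-gluing are in hand.
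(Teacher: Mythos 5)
Your high-level plan (cone the faces of $T_L$ to $\pm\infty$ and stellate) is in the same spirit as the paper's, but the specific subdivision you describe does not produce the decomposition claimed in the lemma. You triangulate each $n$-gon face $F$ radially by a central vertex $c$, cone each radial triangle to the apex of its own torihedron, and then try to pair the resulting upper and lower tetrahedra across $T^2\times\{0\}$. This has two problems. First, $c$ becomes a genuine vertex of your cells, and it is a material (non-ideal) point in the interior of $T^2\times I$, so the cells with vertices $c, v_i, v_{i+1}, \pm\infty$ are not ideal tetrahedra. Second, two tetrahedra glued along a radial triangle form a triangular bipyramid, not a tetrahedron, so your construction yields $2n$ tetrahedra per $n$-gon rather than the $n$ asserted in part (2); likewise your item (iii) gives $2$ tetrahedra per triangle rather than $3$. (In the paper's triangulation $\T$ the triangular bipyramids are also stellated, into $3$ tetrahedra each; the reduction to $2$ per triangle happens only later, via a $3$-$2$ move, when forming $\T'$.)

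The fix, which is the paper's construction, is to reverse the order of operations: first glue the upper pyramid over $F$ to the lower pyramid over $F$ along the entire face, using the rotation-by-one-edge homeomorphism of \reflem{BigonCollapse}, so that $F$ disappears into the interior and one obtains a single ideal bipyramid over $F$ with apexes $\pm\infty$; only then stellate that bipyramid by a single edge from $-\infty$ to $+\infty$ passing through the center of $F$ (the center is not a vertex). The $n$ resulting tetrahedra have vertex set $\{+\infty,-\infty,v_i,v_{i+1}\}$, hence are ideal, each contains exactly one horizontal edge, and there are exactly $n$ of them. This order of operations also dissolves the equivariance worry in your step (iv): since the face is glued before it is subdivided, the rotation only affects which horizontal edges of the equator are identified with which --- i.e.\ the edge-class bookkeeping in part (1) --- and places no compatibility constraint on the subdivision itself.
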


Denote the collection of ideal tetrahedra of \reflem{Stellation} by
$\T$. We will call $\T$ the \emph{stellated bipyramid triangulation}.

\begin{proof}
Cone each face of $T_L$ on $T^2\times \{0\}$ to $\pm\infty$,
respectively, obtaining a bipyramid. Edges of the pyramids that are
incident to $\pm\infty$ become \emph{vertical edges}.  Edges of the
pyramids that are edges of $T_L$ become \emph{horizontal edges}.  See
Figure~\ref{Fig:hor-ver-edges}.

\begin{figure}
  \centering
  \input{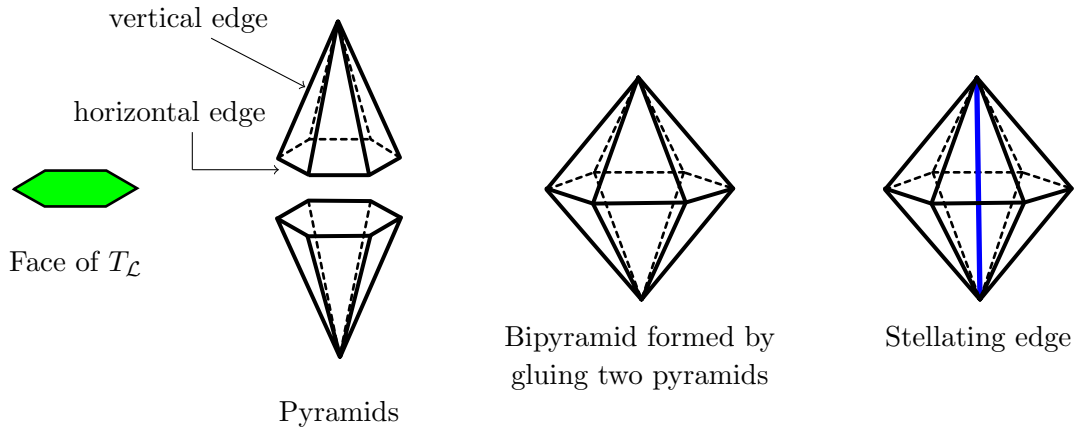}
  \caption{Horizontal, vertical and stellating edges of the stellated
    bipyramid triangulation.}
  \label{Fig:hor-ver-edges}
\end{figure}

Now, for each face of $T_L$, glue the upper pyramid on that face to
the lower pyramid according to the checkerboard coloring, as described
in \reflem{BigonCollapse}.  This provides a decomposition of
$(T^2\times I)-L$ into bipyramids on the faces of $T_L$. Obtain an
ideal triangulation $\T$ of $(T^2\times I)-L$ by stellating the
bipyramids into tetrahedra by adding \emph{stellating edges}, as shown
in Figure~\ref{Fig:hor-ver-edges}.
\end{proof}

By taking the cover of $(T^2\times I)-L$, \reflem{Stellation} gives a
decomposition of $\RR^3-\L$ into ideal tetrahedra. This decomposition
was used in \cite{ckp:gmax} to triangulate the infinite square weave.
It was extended to alternating links in $S^3$ in
\cite{Adams:bipyramids} and to links in thickened higher genus
surfaces in \cite{adams:tilings}.  Our work here for links in
$T^2\times I$ was done independently.

\section{Semi-regular alternating links}
\label{sec:semiregular}

In this section, we study a class of biperiodic alternating links whose
geometric structure and hyperbolic volume can be immediately
determined from a corresponding Euclidean tiling.  For the tilings
considered in this section, the vertices can only be $3$--valent or
$4$--valent; i.e.\ the links $L$ or $\L$ can have at most one bigon
per twist region.

\begin{definition} 
A biperiodic alternating link $\L$ is called \emph{semi-regular} if
$T_{\L}$ is isomorphic, as a plane graph, to a biperiodic edge-to-edge
Euclidean tiling with convex regular polygons.  We also say the
quotient link $L$ is \emph{semi-regular}, and refer to the quotient
tiling of $T^2$ as $T_L$.
\end{definition}

Note that a semi-regular link is reduced and cellular. 

If the link has no bigons, then $T_{\L}$ is isomorphic to $G(\L)$.
Since there is at most one bigon in every twist region of $\L$, for
every bigon, both endpoints of the corresponding edge in $T_{\L}$ are
$3$--valent. Thus, if $V_4(T_{\L})$ denotes the subset of $4$--valent
vertices, then the graph $T_{\L}-V_4(T_{\L})$ admits a perfect
matching, defined as follows.

\begin{definition}\label{Def:PerfectMatching}
  A \emph{perfect matching} on a graph is a pairing of adjacent
  vertices that includes every vertex exactly once.
\end{definition}

Figure~\ref{Fig:3-isolated} shows an example of a tiling with isolated
$3$--valent vertices. Hence, this tiling cannot be associated with a
semi-regular biperiodic alternating link.

\begin{figure}
  \centering
  \includegraphics[height=1.6in]{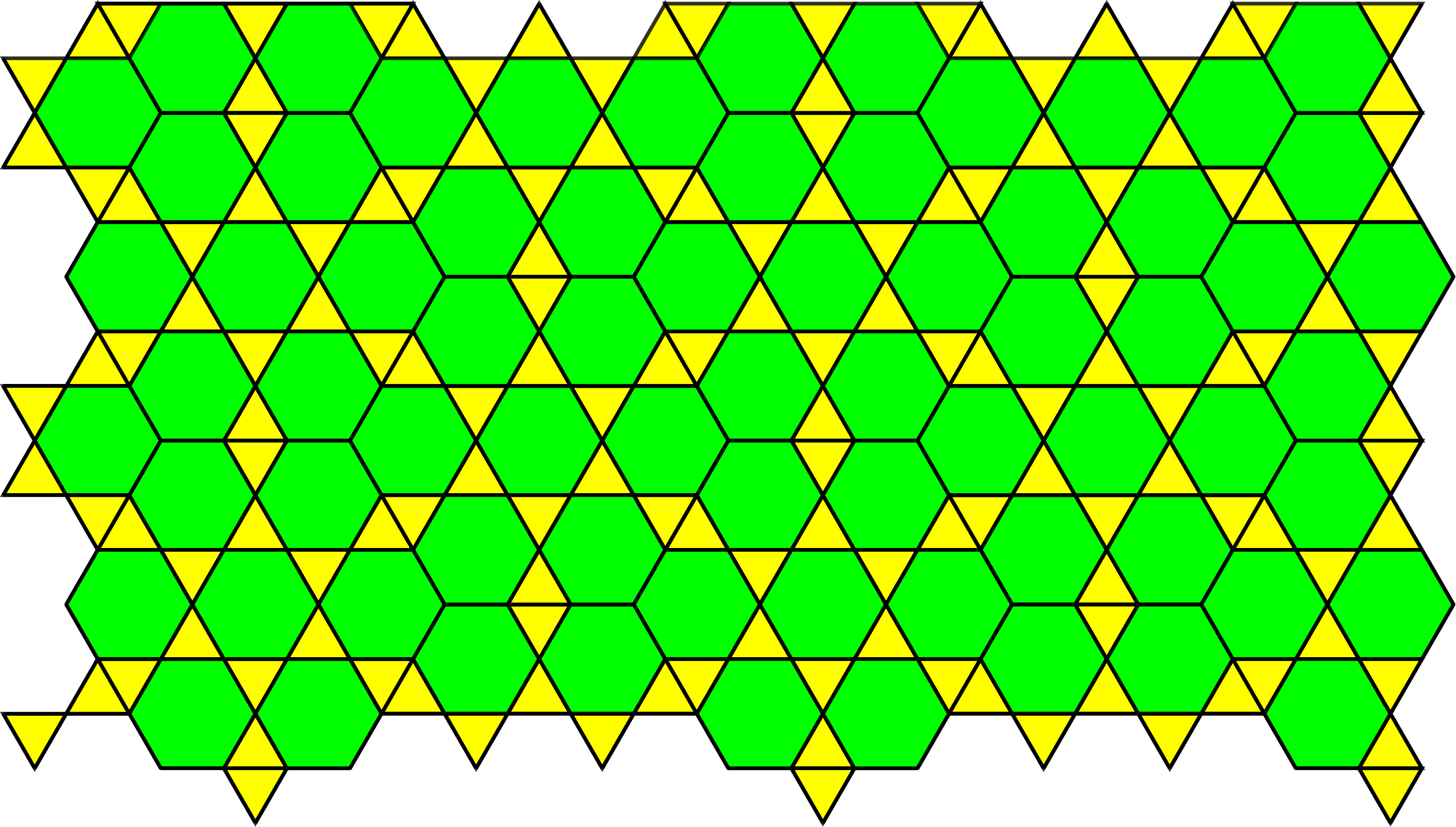}    \\
\caption{A tiling with isolated $3$--valent vertices. Figure modified from \cite{semi-regular-wiki}.}
\label{Fig:3-isolated}
\end{figure}

We now discuss local shapes for Euclidean tilings with $3$ and
$4$--valent vertices.  Let $T$ be a biperiodic edge-to-edge Euclidean
tiling with convex regular polygons. Gr\"{u}nbaum and Shephard
\cite{GS} classified such tilings according to the pattern of polygons
at each vertex. They described 21 vertex types which can occur in $T$.
If an $a$--gon, $b$--gon, $c$--gon, etc.\ appear in cyclic order
around the vertex, then its type is denoted by $a.b.c \ldots$.  For
example, the square tiling has vertex type $4.4.4.4$.

\begin{lemma}\label{lemma:4valent}
Let $T$ be any biperiodic edge-to-edge Euclidean tiling with convex
regular polygons, with vertices of valence $3$ and $4$, such that
$T-V_4(T)$ admits a perfect matching.
\begin{enumerate}
\item[($i$)]
  If $T$ has only $4$--valent vertices, then the only
  polygons that can occur in $T$ are triangles, squares and hexagons,
  and every vertex of $T$ has one of five vertex types:
  \[ 3.3.6.6,\quad   3.6.3.6,\quad   3.4.4.6,\quad  3.4.6.4, \ \text{or} \  4.4.4.4 \]
\item[($ii$)]
  If $T$ has only $4$--valent vertices, the number of
  triangles is twice the number of hexagons in the fundamental domain.
\item[($iii$)]
  If $T$ has $3$--valent vertices, then the only polygons
  that can occur in $T$ are triangles, squares, hexagons, octagons,
  and dodecagons.
\end{enumerate}
\end{lemma}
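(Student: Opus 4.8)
The plan is to extract everything from the angle-sum condition at a vertex together with standard planar tiling combinatorics. A regular $n$-gon has interior angle $\pi(1-2/n)$, so if polygons with $n_1,\dots,n_k$ sides meet at a vertex of valence $k$ then
\[ \sum_{j=1}^{k}\Bigl(1-\frac{2}{n_j}\Bigr)\pi = 2\pi,\qquad\text{i.e.}\qquad \sum_{j=1}^{k}\frac{1}{n_j} = \frac{k-2}{2}. \]
First I would solve this Diophantine equation for $k=3$ and $k=4$ (with $n_j\ge 3$). An elementary case check (order the $n_j$ and bound the smallest) shows that for $k=4$ the only multisets are $\{3,3,4,12\}$, $\{3,3,6,6\}$, $\{3,4,4,6\}$, $\{4,4,4,4\}$, with seven inequivalent cyclic orderings $3.3.4.12,\ 3.4.3.12,\ 3.3.6.6,\ 3.6.3.6,\ 3.4.4.6,\ 3.4.6.4,\ 4.4.4.4$; and for $k=3$ the ten multisets $\{3,7,42\},\{3,8,24\},\{3,9,18\},\{3,10,15\},\{3,12,12\},\{4,5,20\},\{4,6,12\},\{4,8,8\},\{5,5,10\},\{6,6,6\}$.

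For (i) it then remains to exclude the two types containing a dodecagon. Here I would argue locally around a hypothetical dodecagon $D$. Since every vertex of $D$ is $4$-valent and $D$ contributes $150^\circ$ there, the other three polygons at that vertex sum to $210^\circ$ with each angle in $[60^\circ,90^\circ]$, forcing exactly two triangles and one square; in particular across each edge of $D$ sits a triangle or a square. Angle-chasing at the apexes of these neighbours gives: (a) no two consecutive edges of $D$ carry squares; (b) any edge of $D$ carrying a triangle has a neighbouring edge also carrying a triangle; and (c) four consecutive edges of $D$ carrying triangles force the two `middle squares' and the two hexagons these create to meet at a common vertex, accounting for $90^\circ+120^\circ+120^\circ=330^\circ$ and leaving an impossible $30^\circ$ gap. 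Constraints (a)--(c) cut the possible arrangements around $D$'s twelve edges down to two patterns (isolated squares separating triangle-runs of length two, resp.\ length three), which I would eliminate by pushing the same local analysis a little further; equivalently, one may simply invoke the Gr\"unbaum--Shephard classification \cite{GS}, by which the dodecagon vertex types occur only in tilings that also contain a $3$-valent $3.12.12$ vertex. Either way only triangles, squares and hexagons remain, leaving the five listed vertex types.

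Part (ii) is then a one-line Euler count: on the torus $V-E+F=0$, and $4$-valence gives $2E=4V$, hence $F=V$; double-counting edge--face incidences with part (i), $2E=3f_3+4f_4+6f_6$ while $f_3+f_4+f_6=F=V=E/2$, so $3f_3+4f_4+6f_6=4(f_3+f_4+f_6)$, i.e.\ $f_3=2f_6$. For (iii), of the ten $3$-valent multisets above, the six involving a polygon outside $\{3,4,6,8,12\}$ cannot be completed to a tiling: at such a vertex there is a regular $p$-gon with $p\in\{5,7,9,10,11\}$ or $p\ge 13$, and the two polygons meeting it along its two edges at that vertex (whose sizes are forced by the Diophantine relation) propagate a contradiction around the $p$-gon --- this is exactly the realizability part of \cite{GS}, which I would cite. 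The surviving $3$-valent multisets $\{3,12,12\},\{4,6,12\},\{4,8,8\},\{6,6,6\}$, together with the four $4$-valent multisets (whose only large polygon is the dodecagon), show that the only polygons in $T$ are triangles, squares, hexagons, octagons and dodecagons. (The perfect-matching hypothesis is not actually used in (i)--(iii); it is recorded for the link-theoretic results that follow, and it is what excludes tilings like the one in \reffig{3-isolated}.) The main obstacle throughout is precisely the realizability step --- ruling out the angle-admissible but non-tileable configurations (the dodecagon types in (i), the exotic large-polygon types in (iii)) --- which is where the combinatorics of how polygons fit around a large polygon enters; everything else is a finite computation or the Euler identity.
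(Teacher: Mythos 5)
Your proposal is correct and takes essentially the same route as the paper: enumerate the angle-admissible $3$- and $4$-valent vertex types (you solve the Diophantine equation $\sum 1/n_j=(k-2)/2$ directly, where the paper cites the Gr\"unbaum--Shephard list of 21 types), use the Euler characteristic of the torus for ($ii$), and eliminate the angle-admissible but non-realizable types --- the dodecagon types in ($i$) and the exotic large polygons in ($iii$) --- by a local non-extendability case analysis, which the paper likewise leaves at the level of ``a straightforward case-by-case analysis shows.'' One minor caveat: your fallback appeal to Gr\"unbaum--Shephard, asserting that a $3.3.4.12$ or $3.4.3.12$ vertex forces a $3$-valent $3.12.12$ companion, overstates what is true (for instance, the $2$-uniform tiling with vertex types $3.3.4.12$ and $3.3.3.3.3.3$ has only $6$-valent companion vertices); but the weaker fact you actually need --- that such a vertex forces \emph{some} vertex of valence other than $4$ --- is exactly what your direct angle-chase around the dodecagon delivers, and your observation that the perfect-matching hypothesis is not used in ($i$)--($iii$) is accurate.
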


The allowable vertex types for part ($i$) are shown in \reffig{VertexTypes}.

\begin{figure}
\begin{tabular}{ccccc}
\includegraphics{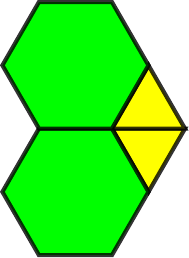}&
\includegraphics{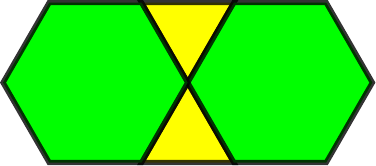}&
\includegraphics{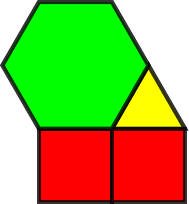}&
\includegraphics{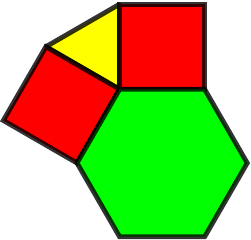} &
\includegraphics{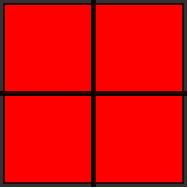} \\
$3.3.6.6$ &  $3.6.3.6$ &  $3.4.4.6$ & $3.4.6.4$ &  $4.4.4.4$ \\
\end{tabular}
\caption{Allowable vertex types for tilings with only $4$--valent
  vertices.}
\label{Fig:VertexTypes}
\end{figure}

\begin{proof}
Part~($i$):
Among the 21 vertex types classified in \cite{GS}, there
are only seven $4$--valent vertex types. These are the five types
discussed above, plus two more types that include a dodecagon:
$3.4.3.12$ and $3.3.4.12$.  But allowing only these seven types of
$4$--valent vertices makes it impossible to extend a tiling from a
vertex of the dodecagon to a tiling of the whole plane. A
straightforward case-by-case analysis shows that if a dodecagon is
present, then all the vertices of the dodecagon cannot be of type
$3.4.3.12$ or $3.3.4.12$, thus ruling out a dodecagon among the vertex
types for a $4$--valent tiling.

Part~($ii$): If $t,s,h$ are the numbers of triangles, squares and
hexagons, respectively, in the fundamental domain, then we have $4v =
2e = 6h+4s+3t$ and $f = h+s+t$.  Now, since the fundamental domain
gives a tiling of the torus, $v-e+f=0$ which implies that $t=2h$.

Part~($iii$): Again we consider the 21 vertex types classified in
\cite{GS}. Ten of these give a 3-valent vertex, and those of type
$4.8.8$, $6.6.6$, $3.12.12$, and $4.6.12$ satisfy the conditions of
the lemma. We show that the others cannot appear.  For each of the
other types, all but $5.5.10$ contain an $n$--gon that appears in no
other vertex types, namely $n=15$, $18$, $20$, $24$, and $42$. In each
of these cases, since we allow only 3 and 4-valent vertices, each of
the vertices meeting the $n$--gon must be of the same vertex type,
namely the unique 3-valent vertex meeting that $n$--gon. But now we
step through each case and check that no tiling of the plane exists
with these vertices on the $n$--gon, and only 3 and 4-valent vertex
types away from the $n$--gon. Finally, once we have ruled out the
other types, the same argument applies for the case $5.5.10$: all
vertices must be of type $5.5.10$, and we cannot complete the tiling.
\end{proof}

\begin{theorem}\label{Thm:Realization}
Every biperiodic edge-to-edge Euclidean tiling $T$ with convex regular
polygons, with vertices of valence $3$ or $4$, such that $T-V_4(T)$
admits a perfect matching, is the tiling $T_{\L}$ for a semi-regular
biperiodic link $\L$.
\end{theorem}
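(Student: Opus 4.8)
The plan is to invert the bigon-collapsing operation of \refdef{tiling}. Starting from $T$ and the given perfect matching $\mu$ on $T-V_4(T)$ --- which we take to be $\Lambda$-periodic, equivalently a perfect matching of the finite quotient graph $T_L-V_4(T_L)$, since that is what is needed to produce a \emph{biperiodic} link --- I would build a $4$-valent plane graph $H$ as follows. Keep every $4$-valent vertex of $T$, to become a crossing. For each matched edge $e=uv\in\mu$ (so $u,v$ are $3$-valent in $T$), replace $e$ by a bigon: introduce vertices $c_u$ near $u$ and $c_v$ near $v$, joined by two parallel edges $e_1,e_2$ bounding a new $2$-gon face $B_e$, with $c_u$ inheriting the two remaining edges of $T$ at $u$ and $c_v$ the two remaining edges of $T$ at $v$, arranged so that the two faces of $T$ meeting $e$ become the two faces of $H$ meeting $B_e$. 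This is precisely the reverse of the move in \reffig{bigons}, carried out $\Lambda$-equivariantly, so $H$ is a connected, $\Lambda$-periodic, $4$-valent plane graph.

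Next I would verify the combinatorial hypotheses needed to call the associated link semi-regular. The faces of $H$ are the bigons $B_e$ together with the faces of $T$ (each edge-replacement above leaves boundary lengths unchanged), and all are disks, so the induced diagram on $T^2$ is cellular. It is also reduced, i.e.\ four distinct faces meet at every crossing: at a crossing from a $4$-valent vertex $v$ of $T$ the incident faces are the four polygons of $T$ around $v$, distinct because each is a bounded convex polygon meeting $v$ in a single corner; at a crossing $c_u$ coming from a matched edge the incident faces are $B_e$ together with the three distinct polygons of $T$ around the $3$-valent vertex $u$, and $B_e$, being a $2$-gon, differs from each of those, since a convex regular polygon has at least three sides. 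This is exactly where one uses that $T$ is an honest tiling of $\RR^2$ by convex polygons. Finally, since $H$ is a $4$-valent plane graph its faces admit a checkerboard coloring, and up to mirror image there is a unique alternating crossing assignment compatible with it; performing this $\Lambda$-equivariantly yields a biperiodic alternating link $\L$ with $G(\L)=H$, whose quotient $L=\L/\Lambda$ has a reduced, cellular, alternating diagram on $T^2\times\{0\}$.

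To finish, I would check that $T_{\L}\cong T$. The only bigon faces of $H$ are the $B_e$, since every other face of $H$ has the boundary length of a face of $T$, hence at least three edges; and each $B_e$ is isolated --- neither of its corners $c_u,c_v$ meets another bigon, the remaining faces there being polygons of $T$ --- so $\L$ has at most one bigon per twist region. Hence collapsing all bigons of $G(\L)=H$, as in \refdef{tiling}, collapses exactly the $B_e$'s; collapsing $B_e$ merges $e_1,e_2$ back to $e$ and returns $c_u,c_v$ to $u,v$, and doing this over all $e\in\mu$ recovers $T$ together with its matching. Thus $T_{\L}\cong T$ as plane graphs and $\L$ is semi-regular.

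The step demanding the most care is the simultaneous verification that the ``uncollapsing'' move is genuinely inverse to bigon collapse \emph{and} yields a reduced, cellular diagram: reducedness rests on the distinctness of the faces surrounding each vertex in a convex-regular-polygon tiling of $\RR^2$, cellularity on those faces being disks, and one must also confirm that no unintended bigons appear in $H$. A secondary point is ensuring that the matching, and likewise the checkerboard coloring, can be taken $\Lambda$-periodic (after passing to a finite-index sublattice if necessary), so that $\L$ is genuinely biperiodic.
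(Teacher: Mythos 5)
Your proposal is correct and follows essentially the same route as the paper, which simply doubles each matched edge into a bigon and replaces vertices by alternating crossings; your write-up just supplies the verifications (cellular, reduced, unique alternating assignment, and that collapsing the new bigons recovers $T$) that the paper leaves implicit. Your observation that the perfect matching must be taken $\Lambda$-periodic for $\L$ to be biperiodic is a legitimate point the paper glosses over, and your handling of it is appropriate.
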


\begin{proof}
Given a tiling, note that if all vertices of $T$ are $4$--valent, then
$T=G(\L)=T_{\L}$ for a semi-regular link $\L$ with no bigons simply by
replacing vertices with crossings in an alternating manner.
Otherwise, if the tiling $T$ has $3$--valent vertices, then the
perfect matching condition implies that the $3$--valent vertices of
$T$ can be covered by a subset of mutually disjoint edges.  We double
each of these edges to obtain the projection $G(\L)$ of a semi-regular
link $\L$ with bigons.  Then the graph $T_{\L}$ of
\reflem{BigonCollapse} is exactly $T$.
\end{proof}

Let $\vtet\approx 1.01494$ and $\voct\approx 3.66386$ be the
hyperbolic volumes of the regular ideal tetrahedron and the regular
ideal octahedron, respectively.  Also let $v_{16}\approx 7.8549$ and
$v_{24}\approx 10.3725$ denote the volume of a hyperbolic ideal
bipyramid over a regular octagon and regular dodecagon, respectively,
with $16$ and $24$ faces.

The main result in this section is the following theorem. Part (1) was
proved independently in \cite{adams:tilings}.

\begin{theorem}\label{Thm:BALvol}
Let $\L$ be any semi-regular biperiodic link, with alternating
quotient link $L$ in $T^2\times I$, and $T_L$ as in \refdef{tiling}.
Then
\begin{enumerate}
\item\label{Itm:HypStruct} $(T^2\times I)-L$ has a complete hyperbolic
  structure coming from a decomposition into regular ideal bipyramids
  on the faces of $T_L$.
\item\label{Itm:4ValVol} If $T_L$ is $4$--valent, then the volume of
  $(T^2\times I)-L$, denoted $\vol(L)$, satisfies
  \[ \vol(L)= 10 H\,\vtet+S\,\voct, \]
  where the fundamental domain of $\L$ contains $H$ hexagons and $S$
  squares.
\item\label{Itm:3ValVol} If $T_L$ also has 3-valent vertices, then
  \[ \vol(L) = (6 H + 2T)\,\vtet + S\,\voct +  \Omega\,v_{16} + D\,v_{24}, \]
  where $H$, $T$, $S$, $\Omega$ and $D$ denote the number of hexagons,
  triangles, squares, octagons and dodecagons, respectively, in the
  fundamental domain of $\L$.
\end{enumerate}
\end{theorem}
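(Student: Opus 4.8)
The plan is to upgrade the stellated bipyramid decomposition of \reflem{Stellation} to a \emph{geometric} decomposition: give the bipyramid over each $n$--gon face of $T_L$ the shape of the regular ideal hyperbolic bipyramid $B_n$, check Thurston's gluing and completeness equations, and then read off the volume as a sum over the faces in a fundamental domain. First I would fix the shapes. Since $\L$ is semi-regular, realize $T_\L$ concretely as a biperiodic edge-to-edge Euclidean tiling by regular polygons; for each $n$--gon face let $B_n$ be the ideal hyperbolic bipyramid obtained by coning a regular Euclidean $n$--gon (inscribed in a circle in $\CC\subset\partial\HH^3$) to the two ideal points $0$ and $\infty$. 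Stellating $B_n$ into $n$ ideal tetrahedra --- each cut out by the Euclidean triangle with angles $2\pi/n$, $\pi/2-\pi/n$, $\pi/2-\pi/n$ --- I would record the dihedral angles of $B_n$: they are $2\pi/n$ along each horizontal (base) edge, $\pi-2\pi/n$ (the interior angle of a regular $n$--gon) along each vertical (lateral) edge, and the stellating (axial) edge is interior to $B_n$ with total angle $n\cdot(2\pi/n)=2\pi$.

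Next I would verify that gluing the $B_{n_F}$ as prescribed by \reflem{Stellation} solves the edge equations, i.e.\ that the dihedral angles around every edge class of $\T$ sum to $2\pi$. For a stellating edge this is immediate, as it lies in a single bipyramid. For a vertical edge at an ideal vertex $v$ of $T_L$, the bipyramids around it are exactly those over the faces of $T_L$ incident to $v$, each contributing its interior angle at $v$; because $T_L$ is a \emph{Euclidean} edge-to-edge tiling by regular polygons, these interior angles sum to exactly $2\pi$. This is the conceptual heart of the argument --- the single place where semi-regularity is used, and the reason a Euclidean rather than a spherical tiling is needed. For a horizontal edge (an edge of $T_L$) one must instead unwind the one-edge ``twist'' in the face identifications of \reflem{BigonCollapse} and \reflem{Stellation} to determine which bipyramids, with which multiplicities, surround its edge class; combined with the dihedral value $2\pi/n$ and the restricted list of polygon types in Lemma~\ref{lemma:4valent}, the contributions again total $2\pi$. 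I expect this last step --- tracking the twisted gluing at a crossing, cf.\ \reffig{4valent-polyhedron} --- to be the main technical obstacle.

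For completeness of the resulting hyperbolic cone metric it is enough that every cusp cross-section be a genuine, scale-free flat surface. The two ends $T^2\times\{\pm 1\}$ give cusps $\pm\infty$ whose cross-section is tiled, by the internal gluings of a single torihedron, by the polygons of $T_L$: it is literally the flat torus underlying $T_L$, hence complete. The cusps at the ideal vertices of $T_L$ have cross-sections built from the (genuinely Euclidean) vertex-link quadrilaterals of the $B_n$; the edge equations force all cone points to have angle $2\pi$, and the quadrilaterals meet along their edges without rescaling, so these cross-sections are flat tori as well. This proves \refitm{HypStruct}.

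Finally, by \refitm{HypStruct} we have $\vol(L)=\sum_F\vol(B_{n_F})$, summed over the faces $F$ of $T_L$ in a fundamental domain of $\L$. I would evaluate the pieces: $\vol(B_4)=\voct$; $\vol(B_6)=6\,\vtet$, since stellating $B_6$ gives six regular ideal tetrahedra; and $\vol(B_3)=2\,\vtet$, since stellating $B_3$ gives three ideal tetrahedra of volume $\Lambda(2\pi/3)+2\Lambda(\pi/6)=\vtet-\Lambda(\pi/3)=\tfrac23\,\vtet$, using $\Lambda(\pi-x)=-\Lambda(x)$ and $2\Lambda(\pi/6)=3\Lambda(\pi/3)=\vtet$ for the Lobachevsky function $\Lambda$; also $\vol(B_8)=v_{16}$ and $\vol(B_{12})=v_{24}$ by definition. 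For \refitm{4ValVol} the only faces are triangles, squares and hexagons, and Lemma~\ref{lemma:4valent}($ii$) gives $T=2H$, so $\vol(L)=6H\,\vtet+2T\,\vtet+S\,\voct=10H\,\vtet+S\,\voct$. For \refitm{3ValVol} the faces are triangles, squares, hexagons, octagons and dodecagons by Lemma~\ref{lemma:4valent}($iii$), while collapsed bigons contribute no faces and hence no volume, yielding $\vol(L)=(6H+2T)\,\vtet+S\,\voct+\Omega\,v_{16}+D\,v_{24}$.
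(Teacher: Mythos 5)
Your overall strategy --- realize each face's bipyramid as the regular ideal $B_n$, verify Thurston's equations directly, and sum volumes --- is viable and is genuinely different from the paper's route, but as written it is missing its central computation. You correctly identify the horizontal edges as the crux and then defer them (``I expect this last step to be the main technical obstacle''), asserting without argument that the contributions total $2\pi$. That assertion is the heart of the theorem. The verification is short but must be done: after unwinding the twisted face gluing, the four tetrahedra identified along a horizontal edge correspond to the four faces of $T_L$ meeting the associated vertex, one per face (see \reffig{4valent-anglesum}), each contributing $2\pi/n_i$; the Euclidean vertex condition $\sum_i (n_i-2)\pi/n_i = 2\pi$ is equivalent to $\sum_i 1/n_i = 1$, whence $\sum_i 2\pi/n_i = 2\pi$. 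No case analysis over the polygon types of Lemma~\ref{lemma:4valent} is needed. Moreover, for part~\eqref{Itm:3ValVol} you never address the degree-six horizontal edges created by collapsing bigons (Lemma~\ref{Lem:BigonCollapse}): there six tetrahedra contribute $4\pi/n_1+2\pi/n_2+4\pi/n_3+2\pi/n_4$, and one must combine the Euclidean conditions at the \emph{two} $3$--valent endpoints to obtain $2/n_1+1/n_2+2/n_3+1/n_4=1$, as in Equation~(\ref{eq:3valent-anglesum}).

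A second gap is that you equate ``the edge equations'' with ``dihedral angles sum to $2\pi$,'' whereas the gluing equations are complex: the shape parameters around an edge must multiply to $1$, not merely have arguments summing to $2\pi$. Your setup actually supplies the fix --- at stellating and horizontal edges the relevant parameters are unimodular, so the angle sum is the whole equation, and at vertical edges the link triangles are literally the Euclidean triangles of the stellated tiling around the vertex, so they close up with no scaling --- but you only invoke the no-rescaling point in the completeness discussion, not where it is needed. The paper sidesteps both issues by a different mechanism: it exhibits the angle assignment as an \emph{angle structure} on a modified triangulation $\T'$ (Lemmas~\ref{Lem:NoBigonsAngles} and~\ref{Lem:BigonsAngles}, with a 3--2 move over each triangular face), shows in \reflem{VolMax} that this angle structure maximizes the volume functional (using that the regular $n$--bipyramid uniquely maximizes volume among ideal $n$--bipyramids), and then invokes Casson--Rivin to conclude that the maximizer realizes the complete structure --- so neither the complex gluing equations nor cusp completeness is ever checked directly. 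Your volume evaluations ($\vol(B_3)=2\vtet$, $\vol(B_6)=6\vtet$, $T=2H$ from Lemma~\ref{lemma:4valent}($ii$), and the final formulas) are correct.
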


The proof of the theorem uses angle structures. 

\begin{definition}\label{def:AngleStruct}
Given an ideal triangulation $\{\Delta_i\}$ of a 3--manifold, an
\emph{angle structure} is a choice of three angles $(x_i, y_i, z_i)
\in (0, \pi)^3$ for each tetrahedron $\Delta_i$, assigned to three
edges of $\Delta_i$ meeting in a vertex, such that
\begin{enumerate}
\item\label{item:sumtopi} $x_i + y_i + z_i = \pi$;
\item\label{item:oppedge} opposite edges in each $\Delta_i$ are
  assigned the same angle;
\item for every edge in the triangulation, the angle sum about the
  edge is $2\pi$.
\end{enumerate}
\end{definition}

Associated to any angle structure is a volume, obtained by summing the
volumes of hyperbolic ideal tetrahedra realizing the specified
angles. This defines a volume functional over the space of angle
structures for a given ideal triangulation of a manifold. The volume
functional is convex, and thus has a maximum. If the maximum of the
volume functional occurs in the interior of the space of angle
structures, then it follows from work of Casson and Rivin that the
angle structures at the maximum give the unique complete, finite
volume hyperbolic structure on the manifold \cite{rivin} (see also
\cite{futer-gueritaud:angles}). We will prove \refthm{BALvol} by
finding the unique maximum in the interior of the space of angle
structures.

We now slightly modify the stellated bipyramid triangulation $\T$ from
\reflem{Stellation} by performing a 3-2 move on every stellated
bipyramid over a trianglular face.  As a result, every triangle of
$T_{\L}$ corresponds to two ideal tetrahedra, one above and one below
the triangular face.  Let $\T'$ denote this modified triangulation.

\begin{lemma}\label{Lem:NoBigonsAngles}
Let $L$ be a semi-regular link with no bigons.  Then the triangulation
$\T'$ of $(T^2\times I)-L$ admits an angle structure with the
following properties:
\begin{enumerate}
\item The tetrahedra coming from stellated hexagons are all regular
  ideal tetrahedra.
\item The tetrahedra coming from each stellated octahedron glue to
  form a regular ideal octahedron.
\item The tetrahedra above and below every triangular face of $T_L$
  are regular ideal tetrahedra.
\end{enumerate}
\end{lemma}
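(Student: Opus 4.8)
Since $L$ has no bigons, $T_L$ is $4$--valent, so by Lemma~\ref{lemma:4valent}($i$) every face of $T_L$ is a triangle, a square, or a hexagon. Consequently each tetrahedron of $\T'$ has one of three types: \emph{(a)} one of the six tetrahedra obtained by stellating a hexagonal face; \emph{(b)} one of the four tetrahedra obtained by stellating a square face; or \emph{(c)} one of the two tetrahedra, one above and one below a triangular face of $T_L$, that meet along that triangle after the $3$--$2$ move. The plan is to write down an explicit candidate angle structure having the three asserted properties and then verify it is an angle structure in the sense of Definition~\ref{def:AngleStruct}.

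Define the candidate as follows. On every tetrahedron of type \emph{(a)} or \emph{(c)}, assign the angle $\pi/3$ to each of the three pairs of opposite edges, making it a regular ideal tetrahedron. On every tetrahedron of type \emph{(b)}, assign $\pi/2$ to the pair of opposite edges consisting of the stellating edge and the (opposite) horizontal edge, and $\pi/4$ to each of the remaining two pairs, which are the vertical edges; since a regular ideal octahedron is exactly four copies of the $(\pi/2,\pi/4,\pi/4)$ ideal tetrahedron glued cyclically about the geodesic joining two antipodal ideal vertices, the four tetrahedra over a square then assemble into a regular ideal octahedron. All assigned angles lie in $(0,\pi)$, so conclusions (1)--(3) of the lemma hold by construction once we know the assignment is an angle structure.

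Conditions (1) and (2) of Definition~\ref{def:AngleStruct} are immediate: $\pi/3+\pi/3+\pi/3=\pi$, $\ \pi/2+\pi/4+\pi/4=\pi$, and opposite edges are assigned equal angles by fiat. The substance is condition (3), that the angle sum about each edge equal $2\pi$, and I would verify it edge type by edge type, following \reflem{Stellation}. About a stellating edge through an $n$--gon face ($n\in\{4,6\}$), the $n$ tetrahedra of that stellated bipyramid wrap around, each meeting it with angle $2\pi/n$, so the sum is $2\pi$. About a vertical edge from a vertex $v$ of $T_L$ to $\pm\infty$, the incident tetrahedra are precisely those of the stellated bipyramids over the faces of $T_L$ meeting $v$; a direct computation with the assignment shows that an $n$--gon face meeting $v$ contributes in total exactly $\pi(n-2)/n$, the interior angle of a regular $n$--gon (a triangle contributes $\pi/3$ via a single tetrahedron; a square contributes $2\cdot\tfrac{\pi}{4}$ via two tetrahedra; a hexagon contributes $2\cdot\tfrac{\pi}{3}$ via two tetrahedra). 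Because $T$ is a Euclidean tiling, the interior angles of the faces surrounding $v$ sum to $2\pi$, so the vertical edge equation holds; this is consistent with each of the five vertex types of Lemma~\ref{lemma:4valent}($i$).

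The main obstacle is the horizontal edges. Their links are not confined to a single face or a single vertex: because the face identifications of \reflem{BigonCollapse} rotate each face by one edge, the link of a horizontal edge chains together tetrahedra coming from several bipyramids, and one must determine, from the face-pairing of \refthm{Torihedra}, exactly which tetrahedra --- and with what multiplicity --- encircle a given horizontal edge. I expect this bookkeeping to be the bulk of the proof; it is the analysis carried out for the infinite square weave in \cite{ckp:gmax} and for bipyramid decompositions in \cite{Adams:bipyramids, adams:tilings}. Having done it, one sees that each incident tetrahedron contributes $\pi/3$ (types \emph{(a)}, \emph{(c)}) or $\pi/2$ (type \emph{(b)}), and that the cyclic arrangement forced by the checkerboard coloring, together with the constraint that each vertex be one of the five allowed types, makes the total exactly $2\pi$. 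Once condition (3) holds for all three edge types, the assignment is a genuine angle structure with the three stated properties, as required; it is this structure --- assembled from regular ideal bipyramids on the faces of $T_L$ (a regular ideal octahedron over each square, a regular ideal bipyramid over each hexagon, and a pair of regular ideal tetrahedra over each triangle) --- that makes the volume computation of \refthm{BALvol} transparent.
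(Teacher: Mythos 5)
Your candidate angle assignment is exactly the one the paper uses (all angles $\pi/3$ on hexagon and triangle tetrahedra, $(\pi/2,\pi/4,\pi/4)$ on square tetrahedra with $\pi/2$ on the stellating/horizontal pair), and your verifications for conditions (1)--(2) of \refdef{AngleStruct} and for the stellating and vertical edge classes match the paper's. But you have left the one genuinely substantive step unproved: the angle sum around horizontal edge classes. You correctly flag this as ``the bulk of the proof'' and then assert the answer (``Having done it, one sees\dots''), appealing vaguely to the checkerboard rotation and a case check over the five vertex types. As written, this is a gap: you never determine which tetrahedra are incident to a horizontal edge class, which is precisely what must be established.

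The paper closes this gap with almost no bookkeeping, and you already have the needed input in hand. By \refthm{Torihedra}(4), each horizontal edge class consists of the four copies of a single crossing arc, one lying in each of the four faces $F_1,\dots,F_4$ of $T_L$ surrounding the corresponding vertex (alternately in the upper and lower torihedron). In the stellation $\T$, each such copy is the horizontal edge of exactly one tetrahedron of the bipyramid over $F_i$, carrying angle $2\pi/n_i$ where $n_i=|F_i|$; so the angle sum is $\sum_{i=1}^4 2\pi/n_i$, which equals $2\pi$ by the Euclidean vertex identity $\sum_i 1/n_i = 1$ (the paper's Equation~(3.1), coming from $\sum_i (n_i-2)\pi/n_i = 2\pi$). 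No enumeration of the five vertex types is needed, and the $3$--$2$ move over triangles replaces one tetrahedron contributing $2\pi/3$ by two contributing $\pi/3$ each, so the sum is unchanged in $\T'$. (A minor stylistic difference: the paper builds the angle structure on $\T$ first and only then performs the $3$--$2$ move, tracking how the angles transform; you work directly in $\T'$, which is fine but makes the horizontal-edge incidence count slightly more delicate since triangles then contribute two tetrahedra to the class rather than one.)
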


\begin{proof}
Since $L$ has no bigons, $G(L)$ is isomorphic to $T_L$.  Recall that
all tetrahedra of $\T$ come from stellating bipyramids over all faces
of $T_L$, including the triangular faces.  We first assign angles to
vertical edges.  Each vertical edge of the triangulation runs from a
vertex of $G(L)$ to $\pm\infty$. The vertex is also a vertex of $T_L$,
a Euclidean tiling. The four faces adjacent to this vertex have four
interior angles coming from the Euclidean tiling. There are eight
tetrahedra coming from the stellation adjacent to this vertical edge:
two tetrahedra per face adjacent to that vertex in $T_L$. For each
tetrahedron, assign to its vertical edge half of the Euclidean
interior angle from $T_L$ of the associated face. Because $T_L$ is a
Euclidean tiling, it immediately follows that the angle sum around
every vertical edge is $2\pi$.

In addition, we obtain further information about adjacent faces.  A
regular Euclidean $n$--gon has interior angle $(n-2)\pi/n$.  If the
faces adjacent to a vertex of $T_L$ are an $n_1$--gon, an $n_2$--gon,
an $n_3$--gon, and an $n_4$--gon, with interior angles $\theta_1$,
$\theta_2$, $\theta_3$, and $\theta_4$, respectively, then the sum of
the interior angles at that vertex satisfies
\begin{align} \label{eq:4valent-anglesum}
\sum_{i=1}^4 \theta_i = \sum_{i=1}^4 \frac{(n_i-2)\pi}{n_i} = 2\pi
\quad \implies \quad \frac{1}{n_1} + \frac{1}{n_2} +\frac{1}{n_3}
+\frac{1}{n_4} &=1.
\end{align}
See \reffig{4valent-anglesum}~(a).

\begin{figure}
  \centering
  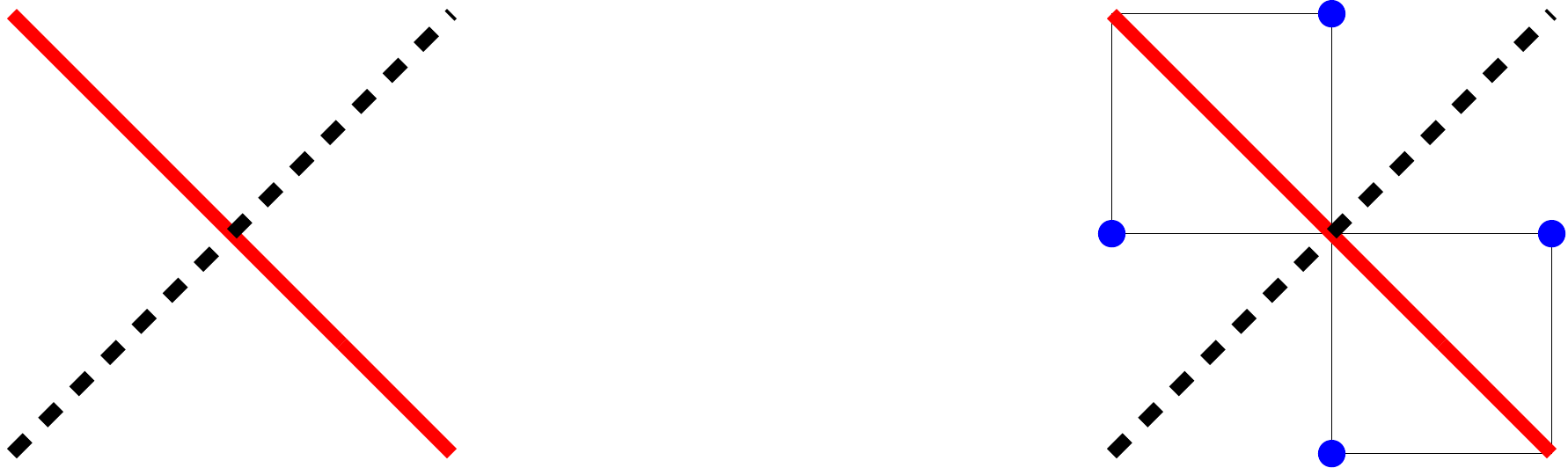
  \caption{(a) Vertex angles $\theta_i$ at a 4--valent vertex of the
    tiling $T_L$.  (b) Dihedral angles $\alpha_i=2\pi/n_i$ at
    stellating edges of $\T$.}
\label{Fig:4valent-anglesum}
\end{figure}

Next we assign angles to the stellating edges. These run from
$-\infty$ to $+\infty$ through the center of an $n$--gon face, and are
adjacent to $n$ tetrahedra.  As the faces of $T_L$ are regular
Euclidean polygons, we assign each edge meeting the stellating edge an
angle $2\pi/n$, so the angle sum around stellating edges is
$2\pi$. Because opposite dihedral angles on each tetrahedron are
equal, this also assigns angles to horizontal edges: each has angle
$2\pi/n$.

Now consider the angle sum over all angles assigned to edges
identified to a fixed horizontal edge $e$. There are exactly four
tetrahedra incident to $e$, shown in \reffig{4valent-anglesum}~(b),
with one tetrahedron in each of the four faces adjacent to a vertex of
$T_L$. The dihedral angle on the horizontal edge coming from the
$i$-th tetrahedron is $\alpha_i=2\pi/n_i$.  Thus, the angle sum at $e$
is
\[ \frac{2\pi}{n_1} + \frac{2\pi}{n_2} + \frac{2\pi}{n_3} + \frac{2\pi}{n_4} = 2 \pi,\]
where the equality follows by equation~(\ref{eq:4valent-anglesum}).
Therefore, the angles assigned to the horizontal, vertical and
stellating edges make the angle sum $2\pi$ on every edge of $\T$,
which gives an angle structure.

Finally, we use this angle structure to get an angle structure on
$\T'$.  Stellated bipyramids over a triangular face consist of three
tetrahedra, each with angle $2\pi/3$ on their horizontal and
stellating edges, and angle $\pi/6$ on vertical edges. Replacing these
three tetrahedra via a 3-2 move, the stellating edge is removed.  At
vertical edges, two faces glue to one, so the angle becomes
$\pi/3$. At horizontal edges, the angle is halved, becoming
$\pi/3$. This results in two ideal tetrahedra with all angles $\pi/3$,
which are regular ideal tetrahedra.
\end{proof}

\begin{lemma}\label{Lem:BigonsAngles}
Let $L$ be a semi-regular link with at most one bigon per twist
region.  Then the triangulation $\T'$ of $(T^2\times I)-L$ admits an
angle structure with the properties of \reflem{NoBigonsAngles}, and
additionally
\begin{enumerate}
\item[(4)] Tetrahedra coming from octagons glue to form a bipyramid over a
  regular ideal octagon, and those coming from dodecagons form a
  bipyramid over a regular ideal dodecagon.
\end{enumerate}
\end{lemma}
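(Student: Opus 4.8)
The plan is to repeat the construction in the proof of \reflem{NoBigonsAngles}, now permitting $3$--valent vertices in $T_L$. Since $L$ has at most one bigon per twist region, \reflem{lemma:4valent}($iii$) guarantees that the faces of $T_L$ are triangles, squares, hexagons, octagons, and dodecagons, and that every vertex of $T_L$ is $4$--valent (of one of the five types in \reffig{VertexTypes}) or $3$--valent of type $4.8.8$, $6.6.6$, $3.12.12$, or $4.6.12$. The triangulation $\T'$ is exactly the one produced by \reflem{Stellation} --- which already incorporates the bigon collapse through $T_L$ --- followed by a 3-2 move on each stellated triangular bipyramid, so no new triangulation is required.

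I would assign angles by the same recipe as before: a tetrahedron coming from an $n$--gon face of $T_L$ receives dihedral angle $2\pi/n$ on its horizontal and stellating edges and dihedral angle $\tfrac{(n-2)\pi}{2n}$, half the Euclidean interior angle of that face, on its two pairs of vertical edges; since $2\pi/n + 2\cdot\tfrac{(n-2)\pi}{2n} = \pi$ and all three angles lie in $(0,\pi)$ for $n\ge 3$, this is a legitimate ideal-tetrahedron shape. The vertical-edge equations then go through verbatim: around a vertical edge the incident tetrahedra come in pairs, two for each of the ($3$ or $4$) faces of $T_L$ meeting the corresponding vertex, so the angle sum equals $\sum_j \tfrac{(n_j-2)\pi}{n_j}$, which is the Euclidean cone angle $2\pi$ at that vertex of the tiling. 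Likewise, the $n$ tetrahedra around a stellating edge through an $n$--gon contribute $2\pi/n$ apiece, summing to $2\pi$.

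The step that requires genuinely new work --- and the main obstacle --- is the edge equation at the remaining (horizontal) edges, because collapsing a bigon alters the equivalence classes of crossing arcs in the way recorded in \reflem{BigonCollapse}: a crossing in a twist region with one bigon contributes an edge of degree $6$ rather than $4$, and the corresponding class threads through the faces of $T_L$ meeting \emph{both} $3$--valent endpoints of the collapsed bigon edge. One must check that the angle sum around every such class is still $2\pi$; this uses the Euclidean relation $\sum_{i=1}^4 1/n_i = 1$ at $4$--valent vertices (equation~\eqref{eq:4valent-anglesum}) together with the Euclidean relation $\sum_{i=1}^3 1/n_i = \tfrac12$ at $3$--valent vertices, which follows from $\sum_{i=1}^3 \tfrac{(n_i-2)\pi}{n_i} = 2\pi$, combined over the faces adjacent to a bigon edge. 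This produces an angle structure on $\T'$, and properties (1)--(3) follow just as in \reflem{NoBigonsAngles}.

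Finally, for property (4): I would observe that for any $n\ge 3$ the $n$ ideal tetrahedra with dihedral angles $2\pi/n$ on their axial (stellating) and horizontal edges and $\tfrac{(n-2)\pi}{2n}$ on their slant (vertical) edges, glued cyclically about their common axial edge with total angle $2\pi$, develop in $\HH^3$ to a non-degenerate convex ideal polyhedron, namely the bipyramid over a regular ideal $n$--gon; indeed this bipyramid has, by its dihedral symmetry, dihedral angle $2\pi/n$ along each of its $n$ base edges and $\tfrac{(n-2)\pi}{n}$ along each of its $2n$ slant edges, in agreement with the reassembly (as one checks directly for $n=3$, two regular ideal tetrahedra glued along a face, and $n=4$, the regular ideal octahedron of property (2)). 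Taking $n=8$ and $n=12$ then shows that the eight tetrahedra over an octagon glue to the bipyramid over a regular ideal octagon, of volume $v_{16}$, and the twelve over a dodecagon glue to the bipyramid over a regular ideal dodecagon, of volume $v_{24}$, which is property (4).
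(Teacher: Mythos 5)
Your proposal is correct and follows essentially the same route as the paper: assign half the Euclidean vertex angles of $T_L$ to vertical edges and $2\pi/n$ to stellating and horizontal edges, with the only genuinely new check being the degree-six horizontal edge class coming from a collapsed bigon, which you correctly reduce to the relation $\sum_i 2/n_i = 1$ at each of the two $3$--valent endpoints (the paper combines these into its Equation~(\ref{eq:3valent-anglesum}) to get the angle sum $\frac{4\pi}{n_1}+\frac{2\pi}{n_2}+\frac{4\pi}{n_3}+\frac{2\pi}{n_4}=2\pi$ over the six incident tetrahedra). Your verification of property (4) by matching the assigned shapes to the stellation of the regular ideal $n$--bipyramid is also consistent with the paper, which leaves that point implicit.
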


\begin{proof}
As in \reflem{NoBigonsAngles}, we stellate bipyramids, and assign
angles to vertical edges of tetrahedra using the vertex angles of the
Euclidean tiling $T_L$. To stellating edges, hence also to horizontal
edges, assign angles $2\pi/n$.  Again, we need to check that the angle
sum around each edge class is $2\pi$.  For vertical and stellating
edges, and for horizontal edges with no 3-valent endpoints, the proof
of \reflem{NoBigonsAngles} applies as before to give angle sum $2\pi$
as desired.

Consider a horizontal edge $e$ with 3-valent endpoints.  Recall the
effect of collapsing bigons, as in \reflem{BigonCollapse}: the
horizontal edge $e$ will be identified to the crossing arcs of a bigon
face, as shown in \reffig{bigons}~(top).  There are four faces
surrounding the two 3-valent vertices of $e$: an $n_1$--gon, an
$n_2$--gon, an $n_3$--gon, and an $n_4$--gon, as illustrated in
\reffig{3valent-anglesum}~(a).

\begin{figure}
  \centering
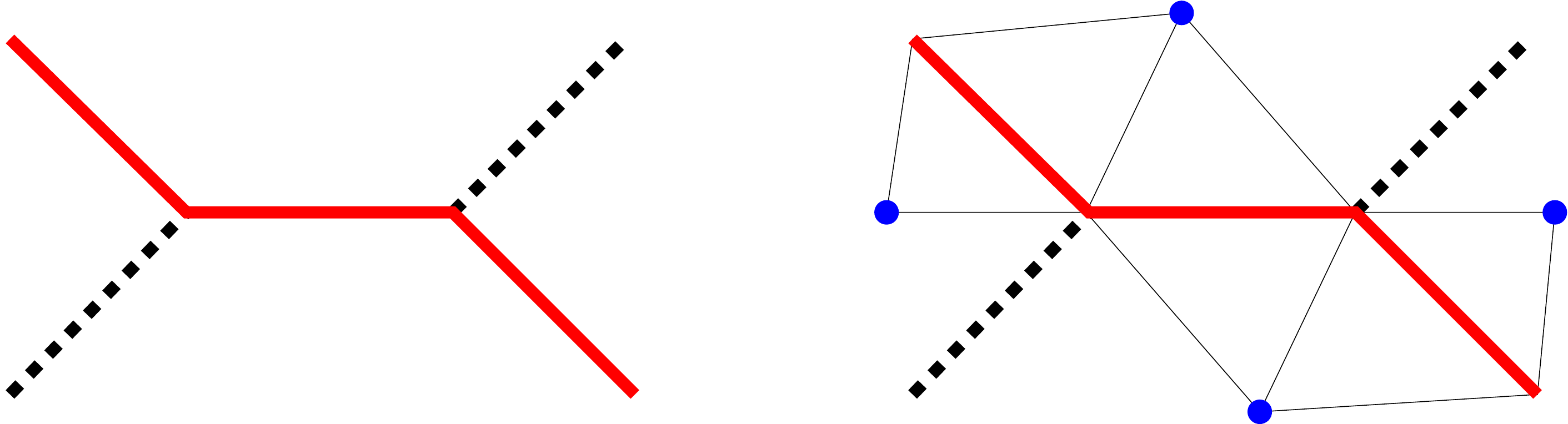
\caption{ (a) Vertex angles $\theta_i$ at a pair of 3--valent vertices
  of the tiling $T_L$.  (b) Dihedral angles $\alpha_i=2\pi/n_i$,
  $1\leq i \leq 4$, at stellating edges of $\T$.}
  \label{Fig:3valent-anglesum}
\end{figure}

The vertex angle of each regular Euclidean $n_i$--gon is
$\theta_i=\frac{(n_i-2)\pi}{n_i}$ for $1\leq i \leq 4$.  Hence, the
following equations express the angle sum around the two endpoints of
edge $e$:

\begin{align*}
  \frac{(n_1-2)\pi}{n_1} + \frac{(n_2-2)\pi}{n_2} +
  \frac{(n_3-2)\pi}{n_3} = 2 \pi & \quad\implies\quad
  \frac{2}{n_1} + \frac{2}{n_2} + \frac{2}{n_3} = 1
  \\
  \frac{(n_1-2)\pi}{n_1} + \frac{(n_3-2)\pi}{n_3} +
  \frac{(n_4-2)\pi}{n_4} = 2 \pi &\quad\implies\quad
  \frac{2}{n_1} + \frac{2}{n_3} + \frac{2}{n_4} = 1
\end{align*}
Adding the equations on the right, we obtain:
\begin{align}
 \label{eq:3valent-anglesum}
 \frac{2}{n_1} + \frac{1}{n_2} + \frac{2}{n_3} + \frac{1}{n_4} &= 1
\end{align}

Now, the horizontal edge $e$ associated to the crossing arc of a bigon
is identified to six tetrahedra, as shown in
\reffig{3valent-anglesum}~(b). The angle sum at edge $e$ is
\[
\frac{4\pi}{n_1} + \frac{2\pi}{n_2} + \frac{4\pi}{n_3} +
\frac{2\pi}{n_4} = 2 \pi,
\]
where the equality follows by Equation~(\ref{eq:3valent-anglesum}).
Thus, this angle assignment gives the desired angle structure.

To complete the proof, perform a 3-2 move on the stellated bipyramids
over triangles, as in the proof of \reflem{NoBigonsAngles}.  As
before, we obtain two regular ideal tetrahedra for each triangular
face of $T_L$.
\end{proof}

\begin{lemma}\label{Lem:VolMax}
Let $L$ be a semi-regular link.  For the triangulation $\T'$ of
$(T^2\times I)-L$, let $\A(\T')$ be the set of angle structures on
$\T'$. Then the volume functional $\vol\from\A(\T')\to\RR$ is
maximized at the angle structure of \reflem{NoBigonsAngles} or
\reflem{BigonsAngles}, depending on whether or not $T_L$ has 3-valent
vertices.
\end{lemma}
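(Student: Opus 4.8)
The strategy is the classical Casson–Rivin approach: the volume functional on the space of angle structures $\A(\T')$ is strictly concave, so it attains a unique maximum, and the angle structure produced in \reflem{NoBigonsAngles} (resp.\ \reflem{BigonsAngles}) is a critical point; concavity then forces it to be \emph{the} maximum. Concretely, I would first recall that $\vol$ is strictly concave down on $\A(\T')$, with $\partial \vol/\partial \theta = -\log|\,\mathrm{Im}(\text{cross-ratio})|$ type formulas; the key consequence I need is that a critical point of $\vol$ on the affine space $\A(\T')$, if it lies in the interior, is the global maximum. So the whole lemma reduces to showing that the explicit angle structure $\A_0$ of \reflem{NoBigonsAngles}/\reflem{BigonsAngles} is a critical point of $\vol$ restricted to $\A(\T')$.

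**Critical-point criterion.** The standard criterion (Rivin; see also Futer–Gu\'eritaud) is that $\A_0$ is a critical point of $\vol$ if and only if, for every edge $e$ of $\T'$ and every way of deforming the angles that preserves properties \refitm{item:sumtopi}–\refitm{item:oppedge} of \refdef{def:AngleStruct} and the edge-sum-$2\pi$ conditions, the derivative of $\vol$ vanishes; equivalently, the dihedral angles at $A_0$ satisfy Thurston's gluing equations in a way that makes all tetrahedra simultaneously realizable as genuine hyperbolic ideal tetrahedra fitting around each edge consistently. This is automatic here for a cleaner reason: all the tetrahedra in $\A_0$ are \emph{regular} ideal tetrahedra (angles $\pi/3$) or explicit pieces of regular ideal bipyramids over regular polygons (with dihedral angles $2\pi/n$ on horizontal/stellating edges and half-polygon-angles on vertical edges), and these genuinely assemble, as already checked in the two preceding lemmas, into the honest geometric pieces: regular ideal tetrahedra, regular ideal octahedra, and regular ideal bipyramids over regular $n$-gons. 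A collection of genuine hyperbolic ideal tetrahedra whose gluing equations are satisfied is exactly a critical point of the volume functional. So the bulk of the work is: (i) observe that the angle assignment of \reflem{NoBigonsAngles}/\reflem{BigonsAngles} lies in the \emph{interior} of $\A(\T')$ (every angle is strictly between $0$ and $\pi$ — indeed each is one of $\pi/3$, $2\pi/n$ with $n\le 12$, or half a Euclidean polygon angle, all in $(0,\pi)$), and (ii) observe that the associated ideal tetrahedra are genuine, with the edge-consistency already verified, hence Thurston's gluing equations hold, hence it is a critical point.

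**Assembling the argument.** I would write: since $\A(\T')$ is a nonempty (by \reflem{NoBigonsAngles}/\reflem{BigonsAngles}) bounded convex polytope and $\vol$ extends continuously to its closure and is strictly concave on the interior, $\vol$ has a unique maximum, and that maximum is the unique interior critical point whenever one exists (Rivin, \cite{rivin}; see also \cite{futer-gueritaud:angles}). Then I verify that $\A_0$ is an interior point and a critical point. For the critical-point verification, the gradient of $\vol$ with respect to a tangent vector to $\A(\T')$ can be computed edge-by-edge using the formula that the derivative of the volume of an ideal tetrahedron with respect to one of its angles $\alpha$ is $-\log|2\sin\alpha|$ (up to the standard normalization), and the tangent space to $\A(\T')$ is spanned by "edge-swap" moves that add $+\epsilon$ to one pair of opposite edges and $-\epsilon$ to another within tetrahedra sharing a common edge; the relevant sum telescopes to zero precisely because at $\A_0$ the tetrahedra around each edge are the honest pieces of a regular bipyramid/octahedron and therefore carry equal "edge invariants" $\log|2\sin\alpha|$ in the pattern demanded by the gluing equations. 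I would spell this out cleanly by noting that \reflem{NoBigonsAngles}(1)–(3) and \reflem{BigonsAngles}(4) exhibit $\A_0$ as coming from an actual (possibly incomplete a priori, but in fact complete) hyperbolic structure built from regular ideal polyhedra, and a hyperbolic structure's angle data is always a critical point of $\vol$ — this is exactly the statement that Thurston's equations $\Leftrightarrow$ critical point.

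**Main obstacle.** The one genuinely delicate point is confirming that $\A_0$ lies in the \emph{interior} of $\A(\T')$ rather than on its boundary: a maximum on the boundary would not, by the Casson–Rivin machinery, give a hyperbolic structure, and more to the point, critical-point-implies-maximum needs an interior critical point. This is where I would be careful to check that no angle degenerates to $0$ or $\pi$: the vertical-edge angles are half the interior angle of a regular $n$-gon, i.e.\ $(n-2)\pi/(2n) \in [\pi/6, \pi/2) \subset (0,\pi)$ since $n \ge 3$; the horizontal/stellating angles are $2\pi/n$, and by \reflem{lemma:4valent} the only polygons occurring have $n \in \{3,4,6,8,12\}$, so $2\pi/n \in \{2\pi/3, \pi/2, \pi/3, \pi/4, \pi/6\} \subset (0,\pi)$; after the 3-2 move over triangles the new tetrahedra are regular with all angles $\pi/3$. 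Hence every coordinate of $\A_0$ is strictly interior, and the argument closes. (A secondary, purely bookkeeping obstacle is making sure the 3-2 move is compatible with the angle-structure formalism — that the post-move angles still satisfy \refitm{item:sumtopi}–\refitm{item:oppedge} and all edge sums, which \reflem{NoBigonsAngles}/\reflem{BigonsAngles} already handle.)
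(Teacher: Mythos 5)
Your overall strategy (exhibit the explicit angle structure as an interior critical point of the concave functional $\vol$ and invoke Casson--Rivin) is legitimate in principle, but it is not the paper's route, and as written it has a real gap at its central step. Your justification for criticality is that the tetrahedra ``genuinely assemble, as already checked in the two preceding lemmas, into the honest geometric pieces.'' Lemmas~\ref{Lem:NoBigonsAngles} and~\ref{Lem:BigonsAngles} check only that the dihedral angle sums around each edge equal $2\pi$ --- that is, exactly the conditions of Definition~\ref{def:AngleStruct}, which \emph{every} point of $\A(\T')$ satisfies by definition. Criticality of $\vol$ at a point of $\A(\T')$ is strictly stronger: by Casson--Rivin it is equivalent to the tetrahedra satisfying the full gluing equations (the modulus condition $\prod_i |z_i|=1$ around each edge, not merely the argument condition) together with the completeness equations at every cusp; equivalently, the gradient must vanish along all leading--trailing deformations, including those attached to curves on the cusp tori, which your sketch never mentions. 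Your ``telescoping'' step simply asserts that these identities hold ``in the pattern demanded by the gluing equations,'' which is the thing to be proved. Worse, in the paper's architecture the statement that the regular ideal pieces glue up to the complete hyperbolic structure is the \emph{conclusion} of Theorem~\ref{Thm:BALvol}\refitm{HypStruct}, deduced from this lemma via Casson--Rivin; taking it as an input here is circular. (The equations can in fact be checked by hand --- e.g.\ around a vertical edge the product of shape parameters reduces to $\prod_f(-e^{-2\pi i/n_f})=1$ using $\sum 1/n_f=1$ --- but that computation, plus completeness, is precisely the work your write-up omits.)

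The paper's actual proof avoids criticality entirely and is a piece-by-piece domination argument valid at every point of $\A(\T')$: group the tetrahedra of $\T'$ into the sub-collections triangulating each bipyramid; for an arbitrary angle structure, the tetrahedra over a triangle contribute at most $2\vtet$ (a single tetrahedron's volume is maximized by the regular ideal one), each square's octahedron contributes at most $\voct$ by \cite[Lemma~3.3]{ckp:weaving}, and each $n$--bipyramid for $n=8,12$ contributes at most the volume of the regular ideal $n$--bipyramid by Rivin~\cite{rivin} together with \cite[Theorem~2.1]{Adams:BipyramidVolume}. The explicit angle structure attains all of these bounds simultaneously, hence is a global maximum --- no gradient computation, no gluing or completeness equations. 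To repair your version you would have to carry out the full gluing-and-completeness verification (at which point Theorem~\ref{Thm:BALvol}\refitm{HypStruct} becomes a direct computation rather than a corollary); otherwise, switch to the domination argument.
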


\begin{proof}
If $L$ has no bigons, then by
Lemma~\ref{lemma:4valent}~($i$), the only regular
$n$--gons that can arise in $T_L$ have $n=3,\,4$, or $6$.  If $L$ has
bigons, then $3$--valent vertices appear in pairs, and octagons and
dodecagons can also arise.

The tetrahedra coming from regular triangles and hexagons have the
angles of a regular ideal tetrahedron. Since the volume of a
tetrahedron is maximized by the regular ideal tetrahedron, the given
angle structure maximizes the volume of these tetrahedra.

For $n=4$, $8$, or $12$, we claim that the tetrahedra obtained by
stellating the regular $4$, $8$, and $12$--bipyramids also maximize
volume.  The regular $4$--bipyramid is the regular ideal octahedron,
which has volume $\voct$.  If we combine the tetrahedra obtained by
stellating the square bipyramids, we obtain a decompostion of
$(T^2\times I) -L$ into regular tetrahedra and octahedra.  In
\cite[Lemma~3.3]{ckp:weaving}, we proved that for any angle structure
on an ideal octahedron $P$, the volume of that angle structure
satisfies $\vol(P) \leq \voct$.  The same argument applies as well for
$n=8$ and $n=12$: by work of Rivin~\cite{rivin}, an angle structure
will have volume bounded by the unique complete hyperbolic structure
on the $n$--bipyramid with that angle assignment. But the maximal
volume of a complete ideal $n$--bipyramid is obtained uniquely by the
regular $n$--bipyramid (see
\cite[Theorem~2.1]{Adams:BipyramidVolume}).

Therefore, whether or not $L$ has bigons, the angle structure on $\T'$
maximizes volume.
\end{proof}

\begin{proof}[Proof of \refthm{BALvol}]
In Lemmas~\ref{Lem:NoBigonsAngles} and \ref{Lem:BigonsAngles}, we
found an angle structure on an ideal triangulation of the complement
of any semi-regular biperiodic link, and by \reflem{VolMax}, that
angle structure maximizes volume. Work of Casson and Rivin implies
that the gluing of hyperbolic ideal tetrahedra that realize this angle
structure gives the complete finite volume structure on
$(T^2\times I)-L$. This completes the proof of part
\eqref{Itm:HypStruct}.

To prove parts \eqref{Itm:4ValVol} and \eqref{Itm:3ValVol}, it remains
to apply Lemma~\ref{lemma:4valent}, describing which Euclidean tilings
can occur as $T_{\L}$.  For part \eqref{Itm:4ValVol}, by
Lemma~\ref{lemma:4valent}~($ii$), there are twice the number of
triangles as hexagons.  A bipyramid on a regular hexagon decomposes
into six regular ideal tetrahedra, which contribute $6\vtet$ to the
volume.  Since every triangle contributes two regular ideal
tetrahedra, it follows that every hexagon contributes $10\vtet$ to the
volume. Thus, if $T_{\L}$ has $H$ hexagons and $S$ squares per
fundamental domain, then the volume density of $\L$ is $10
H\,\vtet+S\,\voct$ per fundamental domain.

Similarly for part \eqref{Itm:3ValVol}, for each regular $n$--gon face
of $T_{\L}$, the contribution to the volume is that of a regular
$n$--bipyramid.  This proves the result.
\end{proof}

\begin{corollary}\label{Cor:CuspShapes}
Let $\L$ be any semi-regular biperiodic link, with alternating
quotient link $L$ in $T^2\times I$. Then the cusps of
$(T^2\times I)-L$ satisfy the following.
\begin{enumerate}
\item Cusps corresponding to $T^2\times \{\pm 1\}$ have fundamental
  domain identical to a fundamental domain of the corresponding
  Euclidean tiling $T_{\L}$.
\item Cusps corresponding to components of $L$ are tiled by regular
  triangles and squares in the case $L$ has no bigons, and otherwise
  by regular triangles and squares, and triangles obtained by
  stellating a regular octagon and a regular dodecagon.
\end{enumerate}
\end{corollary}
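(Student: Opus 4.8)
The plan is to read off both cusp shapes directly from the geometric decomposition established in \refthm{BALvol}, namely that $(T^2\times I)-L$ is obtained by gluing regular ideal bipyramids over the faces of $T_L$. Recall that each such bipyramid has two ideal apexes at $\pm\infty$ and an equatorial polygon that is a regular ideal $n$--gon corresponding to an $n$--gon face of $T_L$. The two apex vertices assemble into the two cusps coming from $T^2\times\{\pm 1\}$, while the equatorial ideal vertices — which are the ideal vertices of the torihedra, i.e.\ the crossing arcs — assemble into the cusps coming from the components of $L$. So the whole argument is a matter of identifying the induced cusp cross-sections in each of these two cases.

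For part (1): fix the cusp at $+\infty$ (the case $-\infty$ is symmetric). In each regular ideal bipyramid over an $n$--gon face $F$ of $T_L$, the cross-section of the cusp at the apex is a Euclidean polygon similar to the base polygon $F$; more precisely, horocyclic cross-sections near the apex of a regular ideal pyramid over a regular Euclidean $n$--gon are themselves regular Euclidean $n$--gons, and the dihedral angles along the vertical edges are exactly the interior angles $\theta_i = (n_i-2)\pi/n_i$ of $T_L$ that we assigned in \reflem{NoBigonsAngles} (and \reflem{BigonsAngles}). When we glue the bipyramids along the faces of $T_L$ according to the checkerboard identification, the vertical edges at a given vertex $v$ of $T_L$ are identified so that the $n$--gon cross-sections fit together around the image of $v$ with total angle $\sum \theta_i = 2\pi$ — this is exactly equation~(\ref{eq:4valent-anglesum}) (respectively, the pair of equations preceding~(\ref{eq:3valent-anglesum})). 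Hence the cusp cross-section at $+\infty$ is tiled by regular Euclidean polygons, one per face of $T_L$, glued edge-to-edge exactly as the faces of $T_L$ are glued — i.e.\ it is a fundamental domain for $T_{\L}$ itself. I would draw the picture of the link of $+\infty$ inside one bipyramid (it is \reffig{hor-ver-edges} seen from above) and then note the gluing is the face-pairing of \reflem{BigonCollapse}, which is precisely the incidence pattern of $T_L$.

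For part (2): fix an equatorial ideal vertex, i.e.\ a crossing arc. In a regular ideal bipyramid over a regular $n$--gon, the link of an equatorial vertex is a Euclidean triangle: it has one edge on each of the two pyramid faces meeting that vertex and one edge cutting across the equatorial $n$--gon face, with the cross-equator angle equal to $\pi/n$ and the two vertical-edge angles summing to $\pi - \pi/n$. In fact, when $L$ has no bigons these triangles are isoceles right triangles for $n=4$ (the octahedron case) and $30$-$60$-$90$ triangles coming from the regular tetrahedra for $n=3$; assembling them around a crossing arc, four torihedral faces meet, so the cusp is tiled by copies of these triangles, which regroup into the "regular triangles and squares" claimed. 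When $L$ has bigons, the additional $n=8$ and $n=12$ bipyramids contribute their equatorial-vertex triangles (the triangles obtained by stellating a regular ideal octagon and dodecagon), and these appear along the longer edge classes of degree $4+2k$ described in \reflem{BigonCollapse}(3). I would check that the angle sums match: around an edge of degree $d$ the cross-sectional triangles close up with total angle $2\pi$, which follows from the angle structure of \reflem{BigonsAngles} since the cusp cross-section triangles have angles that are exactly the dihedral angles of the tetrahedra there.

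The only real subtlety — and the step I expect to require the most care — is bookkeeping for part (2) in the presence of bigons: one must track, along each identified edge class (crossing arc, now of degree $4+2k$), exactly which bipyramid equatorial triangles glue up and verify the resulting cross-section is embedded and tiled as claimed, rather than merely checking the angle sum is $2\pi$. This is most cleanly done by returning to \reffig{bigons} and \reffig{hor-ver-edges}, tracing how the two pyramid faces and the equatorial faces adjacent to a crossing arc sit after the $3$-$2$ move on triangles, and confirming the combinatorial type of the resulting Euclidean cusp triangulation. Parts (1) and the no-bigon half of (2) are then immediate consequences of the explicit regular-bipyramid geometry already in hand.
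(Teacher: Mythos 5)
Your overall strategy is exactly the paper's: read both cusp cross-sections off the regular ideal bipyramid decomposition of \refthm{BALvol}, using the apex links for part (1) and the equatorial vertex links for part (2), with the angle structures of Lemmas~\ref{Lem:NoBigonsAngles} and~\ref{Lem:BigonsAngles} guaranteeing that the pieces close up with angle sum $2\pi$. The paper's own proof is two sentences saying precisely this, and your part (1) is correct as written. In part (2), though, several of your local geometric claims are wrong even though the conclusion survives. The link of an equatorial ideal vertex in the full regular ideal $n$--bipyramid is a quadrilateral (a rhombus with angles $2\pi/n$ and $\pi-2\pi/n$), not a triangle: that vertex meets two horizontal and two vertical edges. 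The triangle you describe is the link inside a single pyramid or a single stellated tetrahedron, and even there your angles are off: within one tetrahedron of the stellation the angle at the horizontal edge is $2\pi/n$ and each vertical-edge angle is $\pi/2-\pi/n$, so $n=4$ gives the isosceles right triangle you want, but $n=3$ and $n=6$ give $120$--$30$--$30$ and equilateral triangles respectively. There are no $30$--$60$--$90$ triangles anywhere; for $n=3$ the relevant pieces after the $3$--$2$ move are regular ideal tetrahedra whose vertex links are equilateral triangles outright. The clean statement, and what the paper actually uses, is simply that the manifold is glued from regular ideal tetrahedra, regular ideal octahedra, and (when there are bigons) the tetrahedra stellating the regular octagonal and dodecagonal bipyramids, whose ideal vertex links are equilateral triangles, squares, and the corresponding stellation triangles; the edge angle sums already verified in Lemmas~\ref{Lem:NoBigonsAngles} and~\ref{Lem:BigonsAngles} show these close up around every crossing-arc edge.
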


\begin{proof}
The second item follows from the fact that $(T^2\times I)-L$ is obtained by gluing regular tetrahedra and octahedra, and the tetrahedra obtained by stellating bipyramids over regular octagons and dodecagons in the case of bigons. The first follows from the fact that the pattern of such polygons meeting the cusps corresponding to $T^2\times\{\pm 1\}$ comes from the Euclidean tiling.
\end{proof}

\section{Commensurability and arithmeticity of semi-regular links}
\label{sec:arithmetic}

The proof of Theorem~\ref{Thm:BALvol} enables us to compute the
invariant trace fields of semi-regular links without bigons.  In
Theorem~\ref{Thm:kM} below, we relate the geometry of the tiling to
the commensurability, arithmeticity and invariant trace fields of the
corresponding links.  Milnor \cite{milnor} conjectured that, except
for certain well-known relations, values of the Lobachevsky function
at rational multiples of $\pi$ are rationally independent.  In
particular, its values at $\pi/3$ and $\pi/4$ are conjectured to be
rationally independent.  Assuming this, we show that there exist
infinitely many semi-regular links that are pairwise incommensurable,
but have the same invariant trace field.

\begin{theorem}\label{Thm:kM}
For a semi-regular link $\L$ with no bigons, with alternating quotient
link $L$, let $M=(T^2\times I)-L$ and let $k(M)$ denote its invariant
trace field.
\begin{enumerate}
\item\label{Itm:Squares} If the fundamental domain of $T_{\L}$
  contains only squares, then $k(M)= \Q(i)$, and $M$ is commensurable
  to the Whitehead link complement.  Hence, $M$ is arithmetic.  In
  this case, $\L$ is the unique semi-regular link called the square
  weave $\W$ below.
\item\label{Itm:TriHex} If the fundamental domain of $T_{\L}$ contains
  only triangles and hexagons, then $k(M)=\Q(i\sqrt{3})$, and $M$ is
  commensurable to the figure-8 knot complement.  Hence, $M$ is
  arithmetic.  In this case, $\L$ is one of infinitely many
  semi-regular links. Examples are shown in Figures~\ref{Fig:trihex}
  and \ref{Fig:arithmetic}~(left).
\item\label{Itm:Mix} If the fundamental domain of $T_{\L}$ contains at
  least one hexagon and one square, then $k(M)= \Q(i,\,\sqrt{3})$.
  Hence $M$ is not arithmetic.  Assuming $\vtet$ and $\voct$ are
  rationally independent, there are infinitely many commensurability
  classes of semi-regular links with this invariant trace field
  (e.g.\ see Figure~\ref{Fig:arithmetic}~(right)).
  \end{enumerate}
\end{theorem}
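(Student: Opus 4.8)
The plan is to exploit the explicit geometric decompositions furnished by \refthm{BALvol}. For part \eqref{Itm:Squares}, the hypothesis forces $T_{\L}$ to be the square tiling $4.4.4.4$, so $\L$ is the square weave $\W$, and by \refthm{BALvol}\eqref{Itm:HypStruct} its complement decomposes into regular ideal octahedra. The regular ideal octahedron is exactly the building block of the Whitehead link complement, so one can either cite the known fact that any manifold tiled by regular ideal octahedra is commensurable with the Whitehead link (whose invariant trace field is $\Q(i)$ and which is arithmetic, since $\Q(i)$ has one complex place and the octahedral group is a maximal arithmetic Kleinian group), or construct an explicit common cover. Either way, $k(M)=\Q(i)$ and arithmeticity follows from the Bianchi group structure over $\Q(i)$.

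For part \eqref{Itm:TriHex}, the hypothesis that only triangles and hexagons appear means that by \refthm{BALvol}\eqref{Itm:4ValVol} the complement is built entirely out of regular ideal tetrahedra. The regular ideal tetrahedron has invariant trace field $\Q(i\sqrt{3}) = \Q(\sqrt{-3})$, and any orientable cusped hyperbolic $3$--manifold tiled by regular ideal tetrahedra is commensurable with the figure-$8$ knot complement (this is the standard fact that such manifolds lie in the Bianchi commensurability class of $\mathrm{PSL}_2(\mathcal{O}_3)$); arithmeticity is then automatic. That there are infinitely many such links is immediate from \refthm{Realization}: there are infinitely many edge-to-edge tilings of the torus by triangles and hexagons satisfying the perfect matching condition (e.g.\ take larger and larger fundamental domains), and these yield infinitely many distinct links. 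One should remark that these need not be pairwise incommensurable — indeed they are all commensurable — which is the point of the contrast with part \eqref{Itm:Mix}.

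For part \eqref{Itm:Mix}, the presence of at least one hexagon and at least one square means the complement decomposes into both regular ideal tetrahedra (contributing $\Q(\sqrt{-3})$) and regular ideal octahedra (contributing $\Q(i)$). The invariant trace field is generated by the shape fields of all tetrahedra in \emph{any} triangulation, and stellating the octahedra produces ideal tetrahedra whose shape parameters lie in $\Q(i)$, while the tetrahedra from hexagons and triangles have shape parameter in $\Q(\sqrt{-3})$; hence $k(M) \supseteq \Q(i,\sqrt{3})$, and one checks the reverse inclusion since all shapes in the stellated triangulation already lie in $\Q(i,\sqrt{3})$. This field has two complex places, so by the arithmeticity criterion (a arithmetic Kleinian group has invariant trace field with exactly one complex place) $M$ is non-arithmetic. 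For the incommensurability statement: by \refthm{BALvol}\eqref{Itm:4ValVol}, $\vol(M) = 10H\vtet + S\voct$ with $H \geq 1$, $S \geq 1$; commensurable manifolds have volumes with rational ratio, so assuming $\vtet$ and $\voct$ are rationally independent, the pairs $(H,S)$ realized by different tilings give volumes whose pairwise ratios are irrational unless the pairs are proportional, and by choosing tilings with, say, $H=1$ and $S$ taking infinitely many values (again via \refthm{Realization}, arranging one hexagon and $S$ squares in an ever-larger fundamental domain) one gets infinitely many pairwise incommensurable examples.

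The main obstacle I expect is \emph{verifying the commensurability classifications cleanly} in parts \eqref{Itm:Squares} and \eqref{Itm:TriHex}: knowing the invariant trace field is $\Q(i)$ or $\Q(\sqrt{-3})$ and that the manifold is arithmetic does pin down the commensurability class (all arithmetic Kleinian groups with a given invariant trace field of one complex place and trivial quaternion algebra are commensurable, and these fields force the trivial algebra), but writing this carefully requires invoking the classification of arithmetic Kleinian groups rather than just the trace field. The cleanest route is to combine (i) the computation of $k(M)$ from the explicit regular-ideal-polyhedron decomposition, (ii) the arithmeticity criterion of Maclachlan--Reid, and (iii) the fact that for the Bianchi groups $\mathrm{PSL}_2(\mathcal O_1)$ and $\mathrm{PSL}_2(\mathcal O_3)$ the commensurability class is determined by the field alone; then identify the Whitehead link and figure-$8$ knot as specific arithmetic representatives.
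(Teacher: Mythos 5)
Your proposal is correct in substance, and for the commensurability claims in parts \eqref{Itm:Squares} and \eqref{Itm:TriHex} it takes a genuinely different route from the paper. The paper first computes $k(M)$ exactly as you do, from the tetrahedral shape parameters $e^{i\pi/3}$ and $e^{i\pi/2}$ via \cite{NR92}; but for commensurability it verifies arithmeticity of a single representative (the square weave, resp.\ the triaxial link) by a direct computation in Snap, invokes the Maclachlan--Reid result that a cusped arithmetic manifold with the invariant trace field of the Whitehead link (resp.\ figure-8 knot) complement is commensurable with it, and --- for part \eqref{Itm:TriHex} --- inserts a separate combinatorial argument (decomposing every triangle--hexagon tiling into four parallelograms related by reflections and $\pi$--rotations) to show all such tilings yield commensurable manifolds before the arithmeticity transfer applies. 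Your route instead observes that the complete structure of \refthm{BALvol} tiles the complement by regular ideal octahedra (four around each edge) or regular ideal tetrahedra (six around each edge), so the developing map carries the decomposition into the $\{3,4,4\}$ or $\{3,3,6\}$ honeycomb and the holonomy group sits inside the honeycomb's symmetry group, which is commensurable with the relevant Bianchi group. This is cleaner: it is computation-free, it establishes arithmeticity and commensurability simultaneously, and it subsumes the paper's parallelogram argument since all triangle--hexagon links land in one honeycomb symmetry group at once. Two small points to tighten: your claim that $k(M)\supseteq\Q(i,\sqrt3)$ in part \eqref{Itm:Mix} needs the full Neumann--Reid \emph{equality} of invariant trace field and shape field for cusped manifolds (containment of shapes in a field only gives the upper bound), which is what the paper cites; and in part \eqref{Itm:TriHex} ``larger and larger fundamental domains'' of one tiling do not produce distinct biperiodic links $\L$ --- you want genuinely distinct biperiodic tilings, which the paper obtains by stacking different sequences of $3.6.3.6$ and $3.3.6.6$ strips following Gr\"unbaum--Shephard. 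Part \eqref{Itm:Mix} otherwise matches the paper's argument essentially verbatim.
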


\begin{figure}
  \centering
\includegraphics[height=1 in]{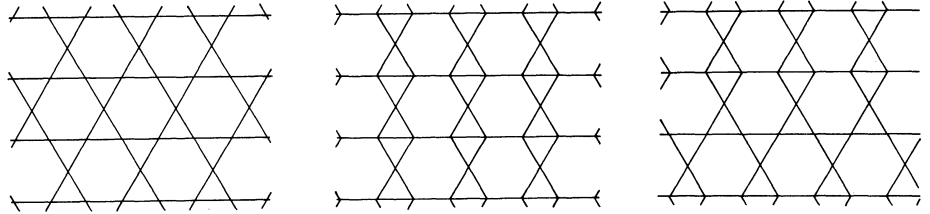}
\caption{Figure 4 from \cite{GS} showing three of the infinitely many
  semi-regular links with triangles and hexagons}
  \label{Fig:trihex}
\end{figure}

\begin{figure}
\begin{center}
  \includegraphics[height=1.5in]{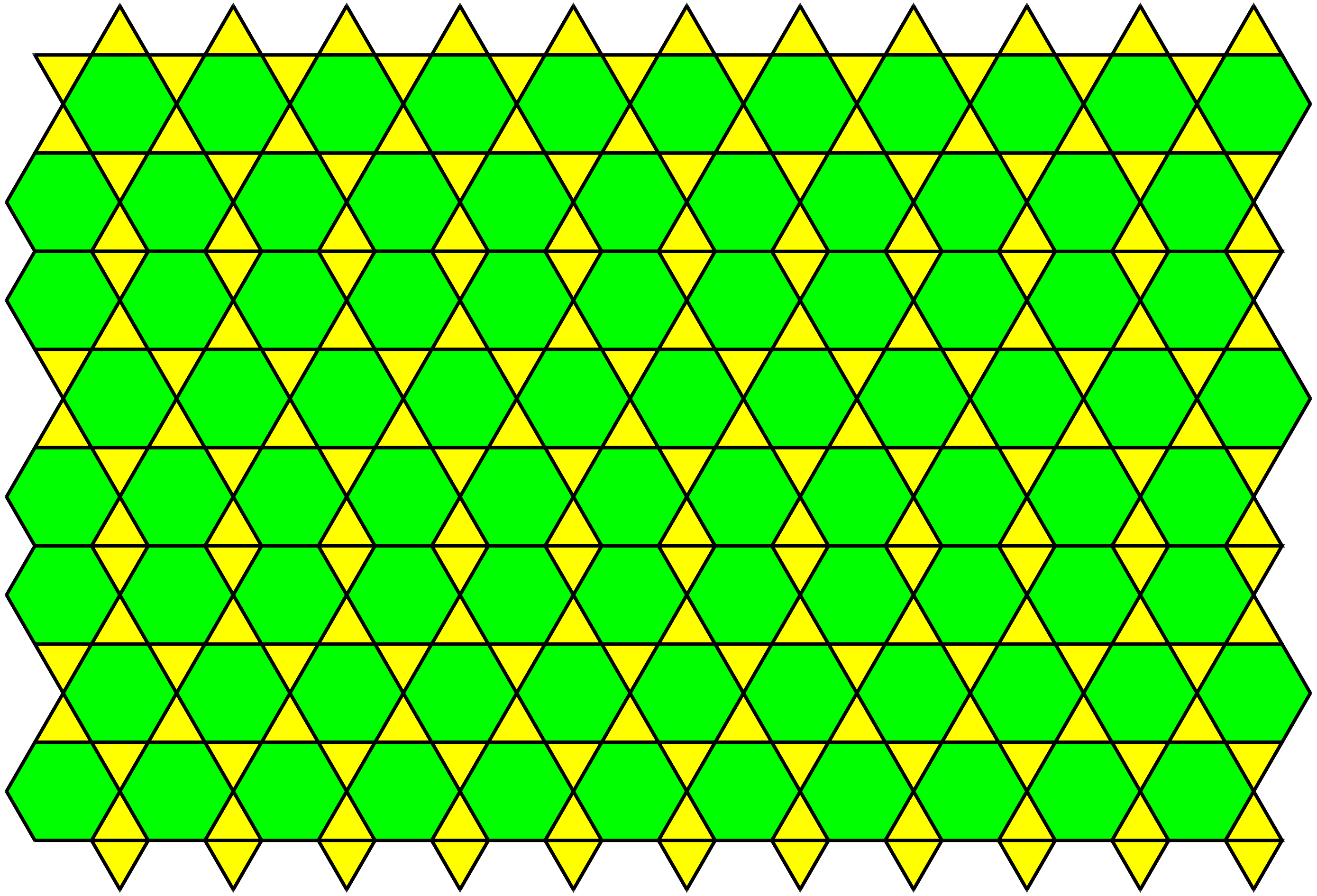} \qquad
  \includegraphics[height=1.5in]{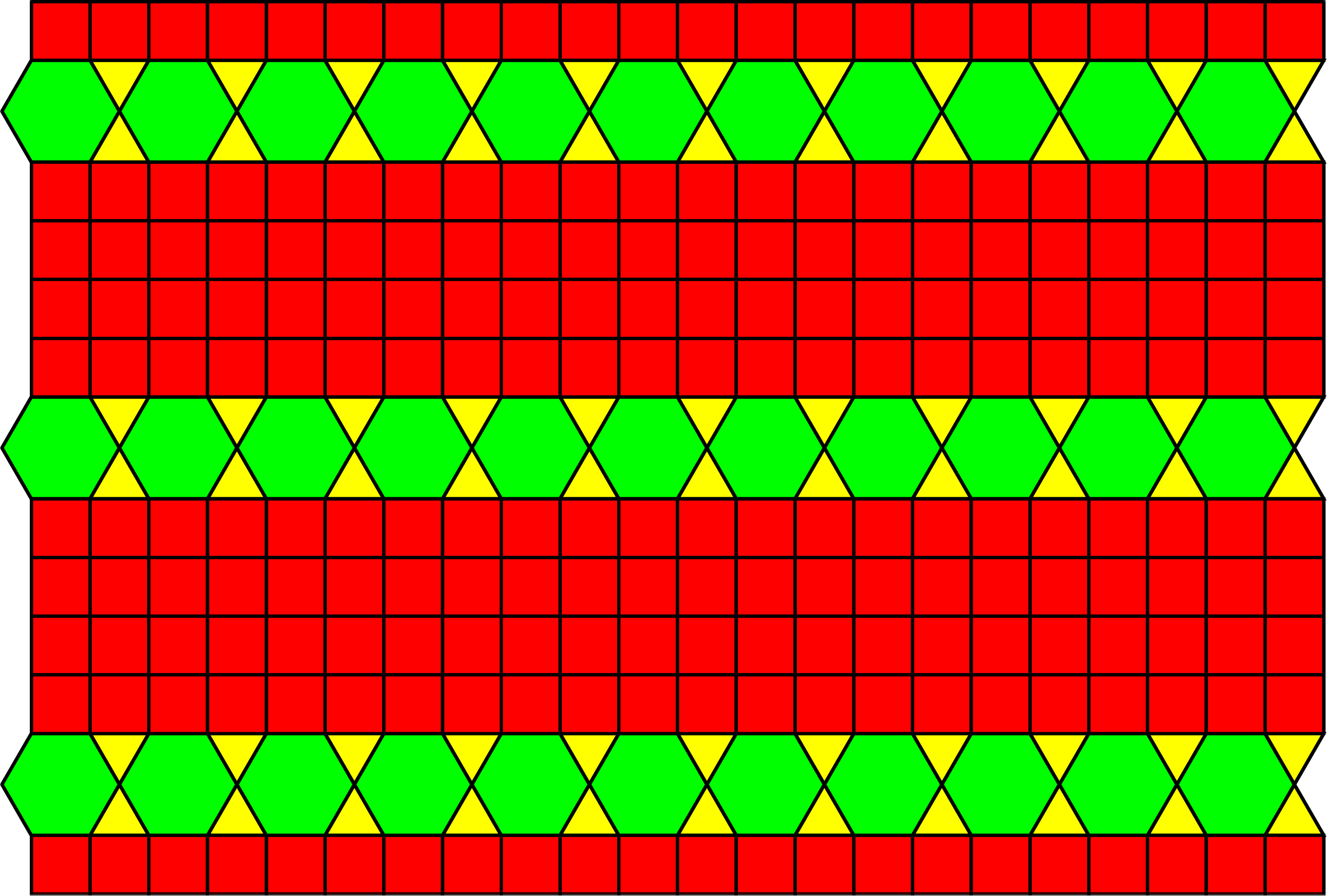}
  \caption{Left: Tiling for one of the infinitely many semi-regular
    links commensurable with the figure-8 knot.  Right: Tiling for
    $\L_2$, one of a pairwise incommensurable family of semi-regular
    links $\L_i$ with the same invariant trace field.  Figures
    modified from \cite{semi-regular-wiki}.}
\label{Fig:arithmetic}
\end{center}
\end{figure}

\begin{proof}
We first compute the invariant trace fields in all the three
cases. Another way of looking at the proof of
Theorem~\ref{Thm:BALvol}~(1) is that the edge gluing equations for
$\T'$ have solutions with tetrahedral parameters $e^{i\pi /3}$ and/or
$e^{i\pi/2}$.  Since the invariant trace field for cusped manifolds is
generated by the tetrahedral parameters \cite{NR92}, the invariant
trace fields are as given above.

We now address the commensurability of these links.  In case
\eqref{Itm:Squares}, a direct computation in Snap \cite{snap} verifies
that one such $M$ is arithmetic, namely the quotient of the square
weave. Any link with fundamental domain consisting only of squares
must be commensurable to this one.  Since $M$ is cusped, has finite
volume, is arithmetic, and has the same invariant trace field as the
Whitehead link complement, $M$ is commensurable to the Whitehead link
complement (see e.g.~\cite[Theorem~8.2.3]{MR2003}).

For case \eqref{Itm:TriHex}, since $T_{\L}$ contains only triangles
and hexagons, it uses tiles only of the type $3.6.3.6$ or $3.3.6.6$ by
Lemma~\ref{lemma:4valent}. As observed in Section 1 of \cite{GS}, by
stacking horizontal strips made up of only one type of tile, we can
obtain infinitely many biperiodic tilings; examples are shown in
Figures~\ref{Fig:trihex} and \ref{Fig:arithmetic}~(left).

Moreover, in any biperiodic tiling with just triangles and hexagons,
such a horizontal strip always exists, and translation along the strip
is one of the directions of the biperiodic action.  Each strip of the
tiles $3.6.3.6$ or $3.3.6.6$ consists of strips of four types of
parallelograms, each of which consists of half of the hexagon and a
triangle, as shown in \reffig{trihex-comm}. These four
parallelograms are related by reflections and $\pi$--rotations.
Since we are using only tiles of the type $3.6.3.6$ or $3.3.6.6$, we
can construct the biperiodic tiling using just these four
parallelograms. Because they are all related by reflections and
$\pi$--rotations, this implies all such tilings are commensurable.

\begin{figure}
  \centering
\begin{tikzpicture}
 \node at (3,0)
 { \includegraphics[width=1.85in]{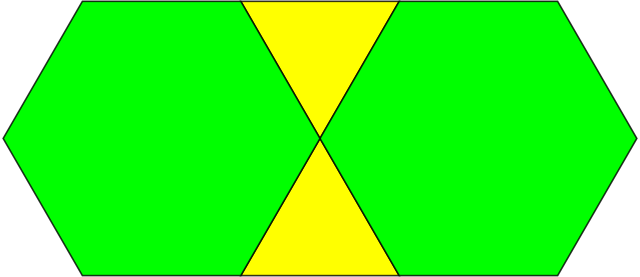}};
\node at (-4,0)
{ \includegraphics[height=1.2 in]{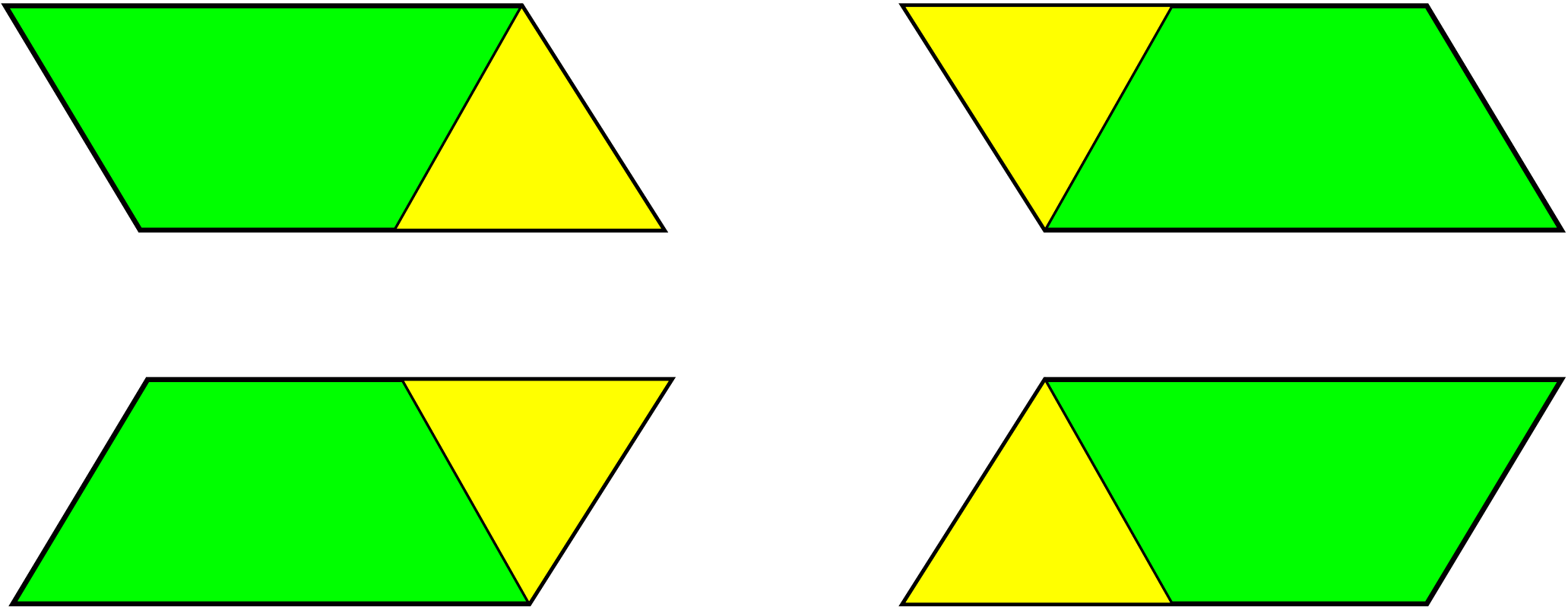}};
\draw[blue, thick] (.65,0)--(3,0)--(3.6,1)--(1.25,1)--(.65,0);
\end{tikzpicture}
\caption{Left: Four parallelograms which comprise strips of the tiles
  $3.6.3.6$ and $3.3.6.6$. Right: One of the parallelograms fitting on
  the tile $3.6.3.6$. }
  \label{Fig:trihex-comm}
\end{figure}

A direct computation in Snap \cite{snap} verifies that one of these
links is arithmetic: namely the complement of the quotient of the
triaxial link $M$ (see Figure~\ref{Fig:triaxial}). Since $M$ is
cusped, has finite volume, is arithmetic, and has the same invariant
trace field as the figure-8 knot complement, $M$ is commensurable to
the figure-8 knot complement \cite{MR2003}.
Hence the commensurability claim follows for all biperiodic tilings
containing only triangles and hexagons.

For case \eqref{Itm:Mix}, let $\L_1$ and $\L_2$ be any two
semi-regular links with no bigons, and at least one hexagon and one
square per fundamental domain.  By Theorem~\ref{Thm:BALvol}~(2),
$\vol(L_1)=p_1\,\vtet+q_1\,\voct$ and
$\vol(L_2)=p_2\,\vtet+q_2\,\voct$ for certain positive integers $p_1,
q_1, p_2, q_2$.

If $\L_1$ and $\L_2$ are commensurable, then there exist integers
$A$ and $B$ such that
\[
A(p_1\,\vtet+q_1\,\voct)=B(p_2\,\vtet+q_2\,\voct) \implies (A\,
p_1-B\, p_2)\vtet = (B\, q_2-A\, q_1)\voct.
\]
Assuming $\vtet$ and $\voct$ are rationally independent, this equation
implies that
\[
\frac{A}{B}=\frac{p_2}{p_1} \quad \text{and} \quad
\frac{A}{B}=\frac{q_2}{q_1} \implies p_1\, q_2 - q_1\, p_2 = 0.
\]
By \refthm{Realization}, there exist infinitely many semi-regular
links $\L_i$ with no bigons, such that for any pair
$\L_{i_1},\,\L_{i_2}$,
\[ p_{i_1}\, q_{i_2} - q_{i_1}\, p_{i_2} \neq 0.\]
Hence these links $\L_i$ are pairwise incommensurable.
\end{proof}

Figure~\ref{Fig:arithmetic}~(right) shows the tiling for $\L_2$, which
is part of a family $\L_j$ with one hexagon and $4j$ squares per
fundamental domain.  All the $\L_j$'s have the same invariant trace
field.

\section{The triaxial link}
\label{sec:triaxial}

The triaxial link $\L$ is shown in Figure~\ref{Fig:triaxial} with its
projection, the trihexagonal tiling.  It has long been used for
weaving, and appears to have been considered mathematically by Gauss,
who drew it in his 1794 notebook; see \cite{Przytycki}.

\begin{figure}
  \centering 
  \includegraphics{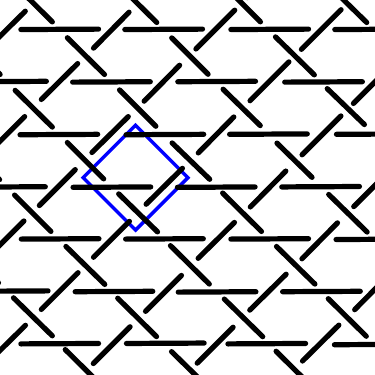}\quad
  \includegraphics{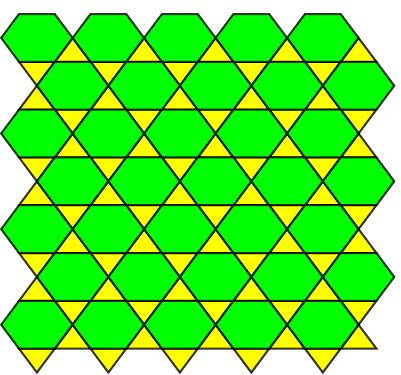}\quad
  \includegraphics[height=1.5in]{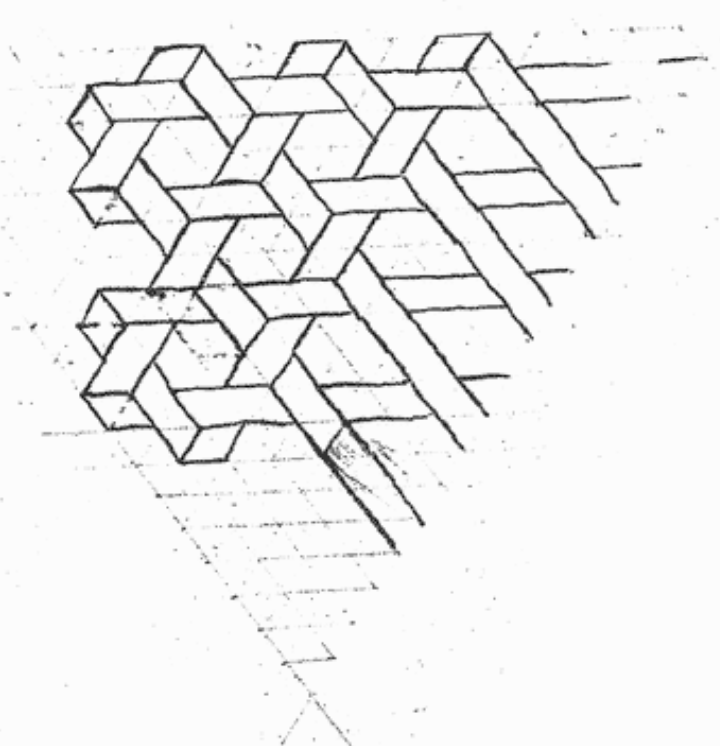}
  \caption{Left to right: Triaxial link $\L$ with a fundamental domain
    (blue square), trihexagonal tiling $T_{\L}$, and a figure from
    Gauss' 1794 notebook \cite{Przytycki}.}
  \label{Fig:triaxial}
\end{figure}

Let $L$ be the alternating quotient of the triaxial link in $T^2\times
I$.  This can be described as a link $K$ in $S^3$ by drawing a
fundamental domain on a Heegaard torus in $S^3$, then adding the Hopf
link given by the cores of the two Heegaard tori, as in
Figure~\ref{Fig:5-chain} (left). After isotopy, the diagram of $K$
appears as in Figure~\ref{Fig:5-chain} (center). This link complement
is isometric to the complement of the minimally twisted 5-chain link,
shown in Figure~\ref{Fig:5-chain} (right), although the links are not
isotopic: $K$ is the link $L12n2232$, and the minimally twisted
5-chain link is $L10n113$ in the Hoste-Thistlethwaite census of links
up to 14 crossings \cite{Hoste-enumeration}.  Among its many
interesting properties, $S^3-K$ is conjectured to be the $5$--cusped
manifold with the smallest hyperbolic volume, and most of the
hyperbolic manifolds in the cusped census can be obtained as its Dehn
fillings~\cite{MPR}.

\begin{figure}
  \centering
  \includegraphics{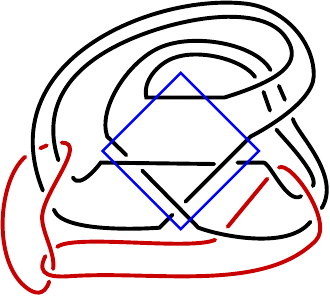} \hspace{0.5in}
  \includegraphics[height=1.1in]{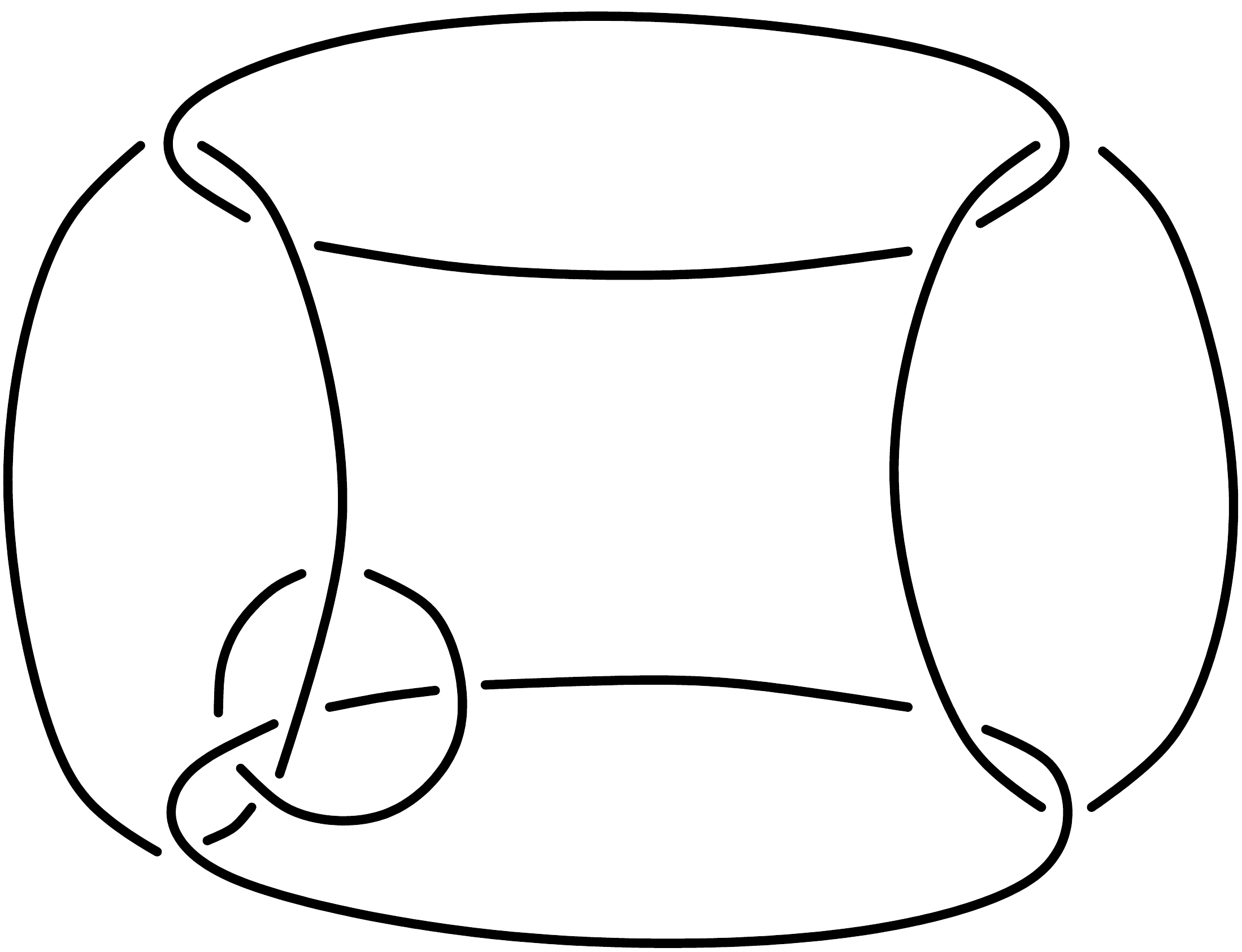} \hspace{0.5in}
  \includegraphics[height=1.2in]{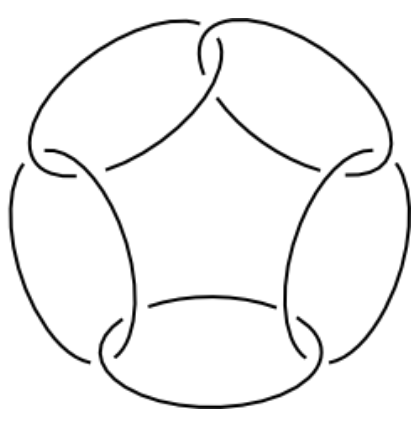} 
  \caption{$K$ (left and center) and minimally twisted $5$--chain link
    (right).}
  \label{Fig:5-chain}
\end{figure}

For a semi-regular biperiodic alternating link,
Theorem~\ref{Thm:BALvol} provides a decomposition of the link
complement into ideal hyperbolic torihedra. We say a torihedron is
\emph{right-angled} if it admits a hyperbolic structure in which all
dihedral angles on edges equal $\pi/2$. In this section we prove that
only two semi-regular biperiodic alternating links admit a
decomposition into right-angled torihedra: the square weave $\W$
studied in \cite{ckp:gmax}, and the triaxial link $\L$.

\begin{theorem}\label{Thm:square_triaxial}
The square weave $\W$ and the triaxial link $\L$ are the only
semi-regular links such that right-angled torihedra give the complete
hyperbolic structure. Thus, the links $\W$ and $\L$ have totally
geodesic checkerboard surfaces.
\end{theorem}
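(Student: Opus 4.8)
The plan is to identify the two torihedra of $L$ as the upper and lower halves of the regular ideal bipyramid decomposition of \refthm{BALvol}, compute their dihedral angles directly from the tiling $T_L$, and then translate \emph{right-angled} into an arithmetic condition on the polygons meeting along each edge of $T_L$. By \refthm{BALvol}~\refitm{HypStruct}, the complete structure on $M=(T^2\times I)-L$ is assembled from regular ideal bipyramids $B_F$, one over each face $F$ of $T_L$, with the two cone points at $\pm\infty$. Cutting each $B_F$ along $F$ into an upper pyramid $P_F^{+}$ (containing $+\infty$) and a lower pyramid $P_F^{-}$ recovers exactly the torihedral decomposition of \reflem{BigonCollapse}: $M_1=\bigcup_F P_F^{+}$ and $M_2=\bigcup_F P_F^{-}$, glued along the faces of $T_L$. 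Since the complete structure is unique, any realization of the torihedra by right-angled ideal polyhedra that glues up must agree with this one, so it suffices to decide when $M_1$ (equivalently $M_2$) is right-angled in the metric it inherits here.

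The faces of $T_L$ are the boundary faces of $M_1$, and exactly two of them meet along each edge of $T_L$ (a crossing arc of $M$), so these edges carry the dihedral angles. Let $e$ be an edge of $T_L$ separating an $n$--gon $F$ from an $m$--gon $G$. Near $e$, $M_1$ is $P_F^{+}\cup P_G^{+}$, glued along the ideal triangle on $e$ and $+\infty$. The angle structure on $\T'$ built in \reflem{NoBigonsAngles} and \reflem{BigonsAngles} --- which realizes the complete structure, by \reflem{VolMax} and the work of Casson and Rivin \cite{rivin} --- assigns dihedral angle $2\pi/n$ to each base edge of the regular ideal $n$--bipyramid $B_F$ (equivalently $\pi/3,\pi/2,\pi/6$ per base edge for $n=3,4,6$, matching the regular tetrahedron, the regular octahedron, and the hexagonal bipyramid made of six regular tetrahedra; the case $n=3$ is handled by the 3-2 move). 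By the reflection symmetry of $B_F$ across $F$, the pyramid $P_F^{+}$ contributes exactly $\pi/n$ to the dihedral angle of $M_1$ at $e$, and $P_G^{+}$ contributes $\pi/m$; hence that dihedral angle equals $\pi/n+\pi/m$.

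Thus the torihedra are right-angled if and only if $\tfrac1n+\tfrac1m=\tfrac12$ for every edge of $T_L$ bounded by an $n$--gon and an $m$--gon, and over integers $n,m\ge3$ the only solutions are $\{n,m\}=\{3,6\}$ and $\{4,4\}$. A $3$--valent vertex of $T_L$ meets three regular faces of sizes $a,b,c$ with $\tfrac1a+\tfrac1b+\tfrac1c=\tfrac12$ (the Euclidean vertex-angle equation), so no two of them satisfy $\tfrac1a+\tfrac1b=\tfrac12$; hence $T_L$ has no $3$--valent vertices, $L$ has no bigons, and $T_L=G(L)$. Since $\tfrac14+\tfrac1m=\tfrac12$ forces $m=4$ while $\tfrac13+\tfrac1m=\tfrac12$ forces $m=6$, a square borders only squares and a triangle or hexagon borders only triangles and hexagons; by connectedness of the tiling, either every face is a square, so $T_L$ is the square tiling $4.4.4.4$, or every face is a triangle or a hexagon with every edge of type triangle--hexagon, so $T_L$ is the trihexagonal tiling $3.6.3.6$. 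By \refthm{kM}~\refitm{Squares}, in the first case $L$ is the quotient of the square weave $\W$, and in the second case $L$ is the quotient of the triaxial link $\L$ (Figure~\ref{Fig:triaxial}). Conversely, for these two tilings every edge yields $\pi/n+\pi/m=\pi/2$, so the torihedra are right-angled and, by the first paragraph, carry the complete structure. For the totally geodesic statement I would then argue locally at a crossing arc $e$: by \refthm{Torihedra} the four faces of $T_L$ around $e$ are colored white, shaded, white, shaded, and the four dihedral wedges of $M$ at $e$ are each $\pi/2$; so the two white faces at $e$ subtend total angle $\pi$ on each side of the white checkerboard surface, which is therefore flat across $e$, and likewise for the shaded surface --- so both checkerboard surfaces, being unions of totally geodesic ideal polygons meeting flatly along the crossing arcs, are totally geodesic.

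The step I expect to be the main obstacle is the second one, namely matching the torihedral decomposition of \reflem{BigonCollapse} with the bipyramid decomposition of \refthm{BALvol}: these are genuinely different partitions of $M$ --- the bipyramids straddle $T^2\times\{0\}$ whereas the torihedra are cut along it --- so the identification $M_1=\bigcup_F P_F^{+}$ and the resulting dihedral-angle bookkeeping need care, especially where bigons have been collapsed, so that a single edge of $T_L$ records a whole twist region rather than one crossing. Once that is settled, what remains is the elementary arithmetic of $\tfrac1n+\tfrac1m=\tfrac12$ combined with the classification of vertex types in Lemma~\ref{lemma:4valent}.
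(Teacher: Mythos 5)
Your proposal is correct and follows essentially the same route as the paper: split the regular ideal bipyramids of Theorem~\ref{Thm:BALvol} into pyramids contributing $\pi/n$ per face, so the torihedral dihedral angle at an edge between an $n$--gon and an $m$--gon is $\pi/n+\pi/m$, forcing $\{n,m\}=\{4,4\}$ or $\{3,6\}$, hence the tilings $4.4.4.4$ and $3.6.3.6$, with total geodesy following from the pleating angle $\pi/2+\pi/2=\pi$. The only (immaterial) difference is that you exclude bigons via the Euclidean identity $\tfrac1a+\tfrac1b+\tfrac1c=\tfrac12$ at a $3$--valent vertex, whereas the paper notes that a collapsed bigon yields a degree-six edge whose angle sum would be $3\pi$.
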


Theorem~\ref{Thm:square_triaxial} implies that no semi-regular
biperiodic alternating links have right-angled torihedra besides the
square weave and the triaxial link. It is still unknown whether there
are biperiodic alternating links that are not semi-regular with a
right-angled torihedral decomposition.

\begin{question}
Besides the square weave $\W$ and the triaxial link $\L$, do there
exist any other right-angled biperiodic alternating links?
\end{question}

\begin{proof}[Proof of Theorem~\ref{Thm:square_triaxial}]
First, we claim that if $\L$ has bigons then the torihedra cannot be
right-angled. For if there is a bigon, \reflem{BigonCollapse} implies
that there is an edge of degree six. If the torihedra were
right-angled, the angle sum on this edge would be $(\pi/2)*6 = 3\pi$,
which is impossible. Thus $\L$ has no bigons, and all vertices of
$T_{\L}$ are 4-valent.

By \refthm{BALvol}, there is a decomposition of any semi-regular
alternating link into regular ideal bipyramids on the faces of
$T_{\L}$. In this case, by Lemma~\ref{lemma:4valent}, the only faces
that occur are triangles, squares, and hexagons.

We obtain the torihedral decomposition from the bipyramid
decomposition of \refthm{BALvol} by first splitting each bipyramid
into two pyramids, and then gluing vertical faces. Splitting into
pyramids cuts in half the angle at the corresponding horizontal
edge. Recall that a horizontal edge of a regular ideal bipyramid over
an $n$--gon has angle $2\pi/n$ (see the proof of
\reflem{NoBigonsAngles}). Thus, splitting along a square gives angle
$\pi/4$, splitting along a hexagon gives angle $\pi/6$. We already
have two pyramids over triangular faces, which are regular ideal
tetrahedra with angle $\pi/3$.

Now, when we glue along vertical faces, the angle coming from one side
of the shared face is added to the angle coming from the other to give
the new angle on the horizontal edge. There are six cases:
\begin{enumerate}
\item A hexagon is adjacent to a hexagon. Then the angle on the
  adjacent horizontal edge is $\pi/6+\pi/6 = \pi/3$.
\item A hexagon is adjacent to a triangle. The angle is $\pi/6+\pi/3 =
  \pi/2$.
\item A hexagon is adjacent to a square. The angle is $\pi/6 + \pi/4 =
  5\pi/12$.
\item A triangle is adjacent to a triangle. The angle is $\pi/3+\pi/3
  = 2\pi/3$.
\item A triangle is adjacent to a square. The angle is $\pi/3 + \pi/4
  = 7\pi/12$.
\item A square is adjacent to a square. The angle is
  $\pi/4+\pi/4=\pi/2$.
\end{enumerate}

Figure~\ref{Fig:angles-alltypes} shows the angles on horizontal edges
of the torihedra for all five vertex types from
Lemma~\ref{lemma:4valent}. Note that the only cases that yield right
angles are the edges where two adjacent squares meet and the edges
where hexagons are adjacent to triangles. In order for the entire
torihedron to be right-angled, these are the only face adjacencies
possible. Then the only possible vertex types from
Lemma~\ref{lemma:4valent} are $3.6.3.6$ and $4.4.4.4$. These
semi-regular links are exactly the triaxial link and the square weave,
respectively.

Finally, note that if gluing geodesic ideal torihedra gives a complete hyperbolic structure, then the checkerboard surfaces inherit a pleating. That is, each surface is obtained by attaching totally geodesic polygonal faces, coming from the geodesic faces of the torihedra, joined along their edges at angles determined by the dihedral angles of the torihedra. 
The amount of bending, or pleating angle, is determined by the dihedral angles of the torihedra glued at that edge. In the case that the torihedra are right angled, then the pleating angle at each ideal edge will be $\pi/2+\pi/2=\pi$, or in other words the surface will be straight, not bent, at each edge. It follows that the surface is totally geodesic.
\end{proof}

\begin{figure}[h]
  \centering
  \import{figures/}{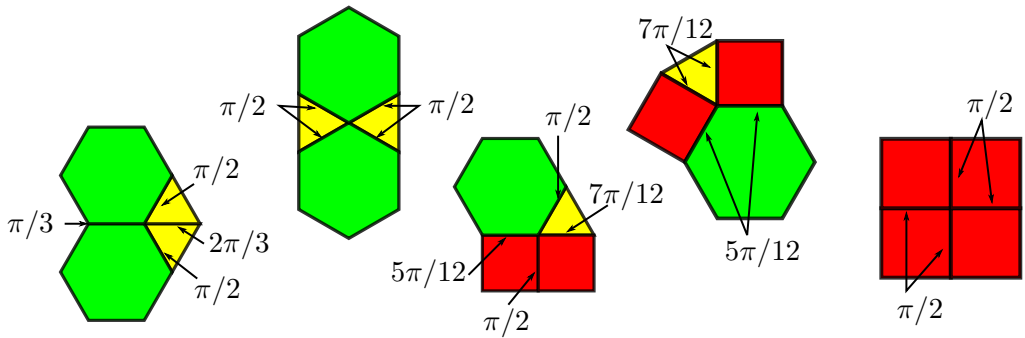}
  \caption{Angles on horizontal edges for the five vertex types.}
  \label{Fig:angles-alltypes}
\end{figure}


\section{Proof of the Volume Density Conjecture for the triaxial link}
\label{sec:asymptotic}

In \cite{ck:det_mp, ckp:density, ckp:gmax}, we considered biperiodic
alternating links as limits of sequences of finite hyperbolic links.
We focused on the asymptotic behavior of two basic invariants, one
geometric and one diagrammatic, for a hyperbolic link $K$: The
\emph{volume density} of $K$ is defined as $\vol(K)/c(K)$, and the
\emph{determinant density} of $K$ is defined as
$2\pi\log\det(K)/c(K)$, where $c(K)$ denotes crossing number.  The
volume density is known to be bounded by the volume of the regular
ideal octahedron, $\voct \approx 3.66386$, and the same upper bound is
conjectured for the determinant density.

In \cite{ck:det_mp}, we defined the following notion of convergence of
links, and proved that for any sequence of alternating links $K_n$
that converge to a biperiodic alternating link $\L$ in this sense, the
determinant densities of $K_n$ converge to a type of determinant
density of $\L$.

\begin{definition}[\cite{ck:det_mp, ckp:gmax}]\label{def:folner_converge}
We will say that a sequence of alternating links $K_n$ {\em F{\o}lner
  converges almost everywhere} to the biperiodic alternating link
$\L$, denoted by $K_n\toF\L$, if the respective projection graphs
$\{G(K_n)\}$ and $G(\L)$ satisfy the following: There are subgraphs
$G_n\subset G(K_n)$ such that
\begin{enumerate}
\setlength\itemsep{0.5em}
\item[($i$)] $ G_n\subset G_{n+1}$, and $\bigcup G_n=G(\L)$,
\item[($ii$)] $\lim\limits_{n\to\infty}|\partial G_n|/|G_n|=0$, where
  $|\cdot|$ denotes number of vertices, and $\partial G_n\subset
  G(\L)$ consists of the vertices of $G_n$ that share an edge in
  $G(\L)$ with a vertex not in $G_n$,
\item[($iii$)] $ G_n\subset G(\L)\cap(n\Lambda)$, where $n\Lambda$
  represents $n^2$ copies of the $\Lambda$-fundamental domain for the
  lattice $\Lambda$ such that $L=\L/\Lambda$,
\item[($iv$)] $ \lim\limits_{n\to\infty} |G_n|/ c(K_n) = 1$.
\end{enumerate}
\end{definition}

\begin{definition}[Definition 4.4 \cite{ckp:gmax}]\label{def:cycle_of_tangles}
A diagram has \emph{no cycle of tangles} if whenever a disk embedded
in the torus meets the diagram transversely in exactly four edges,
then the disk contains a single twist region; i.e.\ a sequence of
bigons or exactly one crossing.
\end{definition}

As above, let $\W$ be the infinite square weave.  Using the
right-angled hyperbolic structure of $\W$ in an essential way, in
\cite{ckp:gmax} we proved:

\begin{theorem}\cite{ckp:gmax}\label{Thm:gmax}
Let $K_n$ be any alternating hyperbolic link diagrams with no cycles
of tangles such that $K_n\toF\W$.  Then for $K_n$, the volume and
determinant densities satisfy:
\[ \lim_{n\to\infty}\frac{\vol(K_n)}{c(K_n)}= \lim_{n\to\infty}\frac{2\pi\log\det(K_n)}{c(K_n)}=\voct. \]
\end{theorem}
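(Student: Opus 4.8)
\emph{Overall plan.} The plan is to prove the two equalities of Theorem~\ref{Thm:gmax} by independent arguments --- a geometric one for the volume density and a combinatorial one for the determinant density --- and in each case to observe that the matching \emph{upper} bound holds for every link, so that the content is in the \emph{lower} bound. For the volume, the upper bound $\vol(K_n)\le \voct\, c(K_n)$ is the standard one coming from the octahedral decomposition of a link complement from a diagram: a connected diagram with $c(K_n)$ crossings realizes $S^3\setminus K_n$ as a union of $c(K_n)$ ideal octahedra, one per crossing, and a straightening argument (D.~Thurston) shows the hyperbolic volume is at most the sum of the volumes of regular ideal octahedra. Together with condition $(iv)$ of Definition~\ref{def:folner_converge}, namely $|G_n|/c(K_n)\to1$, this gives $\limsup_n \vol(K_n)/c(K_n)\le\voct$.

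\emph{The volume lower bound --- the crux.} Here I would use in an essential way that $\W$ is right-angled: by \refthm{square_triaxial}, cutting the complete structure on $\R^3\setminus\W$ along a checkerboard surface yields right-angled ideal polyhedra with no discarded $I$-bundle or Seifert-fibered pieces, so the complete structure is a union of regular ideal octahedra, one per crossing, and the volume density of $\W$ equals $\voct$ per crossing (consistent with \refthm{BALvol}(2)). Now cut $S^3\setminus K_n$ along a checkerboard surface $\Sigma_n$; the no-cycle-of-tangles hypothesis (Definition~\ref{def:cycle_of_tangles}) forces the diagram of $K_n$ to be prime and twist-reduced, so $\Sigma_n$ is essential and the guts of the cut manifold is well defined. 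An Agol--Storm--Thurston lower volume bound then gives $\vol(K_n)\ge\voct\cdot|\chi_-(\mathrm{guts})|$, normalized to one ideal crossing block per unit. Since $K_n\toF\W$, every crossing of $K_n$ lying in the interior of $G_n$ --- at bounded combinatorial distance from no vertex of $\partial G_n$ --- is surrounded exactly as in $\W$, so its crossing block survives into the guts and contributes a full octahedron's worth of volume; there are at least $|G_n|-O(|\partial G_n|)$ such crossings. Hence $\vol(K_n)\ge \voct\,(|G_n|-O(|\partial G_n|))$, and dividing by $c(K_n)\sim|G_n|$ and invoking the F{\o}lner condition $|\partial G_n|/|G_n|\to0$ gives $\liminf_n\vol(K_n)/c(K_n)\ge\voct$. \emph{The main obstacle} is obtaining this guts estimate with the \emph{sharp} constant $\voct$: the classical lower bounds of Lackenby lose a definite factor to $I$-bundle and annulus pieces that are discarded from the guts, and one must show that ``no cycle of tangles'' together with F{\o}lner convergence to the \emph{right-angled} weave forces the guts to account for essentially every interior crossing with coefficient exactly $\voct$, matching the octahedral upper bound. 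It is precisely the rigidity of the right-angled octahedral structure on $\R^3\setminus\W$ (\refthm{square_triaxial}) that rules out any such loss in the model, and the real work is in transferring this rigidity to $S^3\setminus K_n$.

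\emph{The determinant density.} This part is purely diagrammatic. For an alternating diagram, $\det(K_n)$ equals the number of spanning trees $\tau(H_n)$ of a Tait (checkerboard) graph $H_n$ of $K_n$; the no-cycle-of-tangles hypothesis ensures $H_n$ is a genuine checkerboard graph (in particular $\det(K_n)\neq0$) with $|V(H_n)|\sim c(K_n)/2$, since the Tait graph of $\W$ is the square lattice $\ZZ^2$ and $K_n\toF\W$. Thus the $H_n$ form, with F{\o}lner boundary, an exhaustion of $\ZZ^2$, and by R.~Lyons's continuity theorem for tree entropy along F{\o}lner exhaustions of an amenable graph, $\tfrac{1}{|V(H_n)|}\log\tau(H_n)$ converges to the tree entropy of $\ZZ^2$, which by the matrix--tree theorem and Fourier analysis on the torus equals $\tfrac{1}{(2\pi)^2}\int_{[0,2\pi]^2}\log(4-2\cos x-2\cos y)\,dx\,dy=\tfrac{4G}{\pi}$, with $G$ Catalan's constant. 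Using $|V(H_n)|\sim c(K_n)/2$ and the identity $\voct=4G$ (equivalently $\voct=8\Lambda(\pi/4)$ for the Lobachevsky function $\Lambda$), we obtain $2\pi\log\det(K_n)/c(K_n)\to 2\pi\cdot\tfrac12\cdot\tfrac{4G}{\pi}=4G=\voct$. Combined with the volume statement, this proves Theorem~\ref{Thm:gmax} modulo the sharp guts estimate flagged above as the main obstacle.
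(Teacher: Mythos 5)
Your determinant argument is essentially the right one and matches the substance of what the cited source does: the paper routes it through the Mahler measure of the characteristic polynomial of the toroidal dimer model (\refthm{det_mp} specialized to $\W$), which is equivalent to your spanning-tree-entropy computation for $\ZZ^2$ via the matrix--tree theorem, and the identity $\voct=4G=8\Lambda(\pi/4)$ closes the loop exactly as you say. The volume upper bound via the octahedral decomposition is also fine (and does not even need condition $(iv)$).

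The genuine gap is in the volume lower bound, and it is not merely a technicality you have deferred: the mechanism you propose would not produce the sharp constant. An Agol--Storm--Thurston guts estimate applied to a checkerboard surface $\Sigma_n$ bounds $\vol(K_n)$ below by a multiple of $|\chi(\mathrm{guts})|$, and for an alternating diagram without bigons one has $\chi(S_w)+\chi(S_b)=2-c(K_n)$; so even in the ideal situation where nothing is lost to $I$-bundle or Seifert pieces, cutting along a single checkerboard surface yields at best about $\voct\cdot c(K_n)/2$, i.e.\ half the claimed bound. There is no sense in which ``each interior crossing block survives into the guts and contributes a full octahedron'': the guts bound is an Euler-characteristic count, not a per-crossing volume count, and it cannot see the extra factor of two. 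The proof in \cite{ckp:gmax} (whose skeleton is visible in this paper's proof of \refthm{triaxial}) instead cuts along \emph{both} checkerboard surfaces to obtain the two checkerboard polyhedra, and proves [Theorem~4.13 of \cite{ckp:gmax}] that $\vol(S^3-K_n)\geq 2\vol(P_n)$, where $P_n$ is the checkerboard polyhedron realized as a right-angled ideal hyperbolic polyhedron via an orthogonal circle pattern (this step does use guts, but combined with the geometry of right-angled polyhedra, not a naive crossing count). The sharp constant then comes from a separate rigidity/convergence statement [Lemma~5.3 of \cite{ckp:gmax}]: as $K_n\toF\W$, the orthogonal circle patterns of $P_n$ converge, by He's rigidity theorem for the infinite orthogonal square-grid pattern of $\W$, to the pattern of \refthm{square_triaxial}, so $\vol(P_n)/c(K_n)\to\voct/2$ and hence $2\vol(P_n)/c(K_n)\to\voct$. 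This two-step structure --- right-angled polyhedral lower bound plus disk-pattern rigidity --- is exactly the part your sketch is missing, and it is the part the present paper reuses verbatim (with the triaxial circle pattern and the constant $10\vtet/3$) to prove \refthm{triaxial}.
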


The volume density of a biperiodic alternating link is defined as
\mbox{$\vol((T^2\times I)-L)/c(L)$}, where $c(L)$ is the crossing
number of the reduced alternating projection of $L$ on the torus,
which is minimal.  Hence, as $K_n\toF\W$, the volume densities of
$K_n$ converge to the volume density of $\W$, which is $\voct$; see
\cite{ckp:gmax}.

For determinant density, there is a toroidal invariant of $\W$ that
appears as the limit of the determinant density, namely the Mahler
measure of the two-variable characteristic polynomial of the toroidal
dimer model on an associated biperiodic graph, which measures the
entropy of the dimer model.  In \cite{ck:det_mp}, this diagrammatic
result for $\W$ was extended to {\em any} biperiodic alternating link
$\L$:

\begin{theorem}\cite{ck:det_mp}\label{Thm:det_mp}
Let $\L$ be any biperiodic alternating link, with alternating quotient
link $L$.  Let $p(z,w)$ be the characteristic polynomial of the
associated toroidal dimer model.  Then
\[ K_n\toF \L \quad \Longrightarrow \quad {\lim_{n\to\infty}\frac{\log\det(K_n)}{c(K_n)} = \frac{m(p(z,w))}{c(L)}}. \]
\end{theorem}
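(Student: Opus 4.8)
The plan is to rewrite $\det(K_n)$ as a spanning-tree count, transport this to a dimer partition function via Temperley's bijection, recognize the exponential growth rate of that partition function on the $\Z^2$--periodic Temperley graph as the Mahler measure $m(p)$, and then use the F{\o}lner hypothesis to pass from the periodic rate to the limit of the finite densities. We may assume the $K_n$ have reduced alternating diagrams, since nugatory crossings affect neither $\det$ nor $c$ and do not arise in a F{\o}lner exhaustion of the reduced biperiodic diagram of $\L$. First I would invoke the classical identity (Crowell; Bott--Mayberry): for a connected reduced alternating diagram $K$ with Tait (checkerboard) graph $\Gamma(K)$, one has $\det(K)=\tau(\Gamma(K))$, the number of spanning trees of $\Gamma(K)$ (equivalently of the dual checkerboard graph), and $\Gamma(K)$ has $c(K)$ edges. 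So it suffices to show $\frac{1}{c(K_n)}\log\tau(\Gamma(K_n))\to m(p)/c(L)$, where $\Gamma(\L)$ is the $\Z^2$--periodic checkerboard graph whose torus quotient has $c(L)$ edges.

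Next I would apply Temperley's bijection, which identifies rooted spanning trees of a planar graph $\Gamma$ with perfect matchings of an auxiliary bipartite planar graph $\Gamma^{\dagger}$ assembled from $\Gamma$, its planar dual, and the medial graph. For the periodic graph $\Gamma(\L)$ the Temperley graph $\Gamma(\L)^{\dagger}$ is again $\Z^2$--periodic, and the characteristic polynomial $p(z,w)$ of the associated toroidal dimer model is, up to multiplication by units, $\det\mathbb{K}(z,w)$, the determinant of the periodic Kasteleyn operator on a fundamental domain with Bloch--Floquet parameters $z,w$ on the unit torus. This converts the problem into a growth-rate statement for a periodic bipartite dimer model.

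Then I would invoke Kasteleyn theory for $\Z^2$--periodic planar bipartite graphs (Kenyon--Okounkov--Sheffield; Cohn--Kenyon--Propp): the number of dimer coverings of a box of $N\times N$ fundamental domains is $\exp\bigl(N^2\,m(p)+o(N^2)\bigr)$, where
\[ m(p)=\frac{1}{(2\pi)^2}\int_0^{2\pi}\!\!\int_0^{2\pi}\log\bigl|p(e^{i\theta},e^{i\phi})\bigr|\,d\theta\,d\phi; \]
equivalently, the spanning-tree entropy per fundamental domain of $\Gamma(\L)$ equals $m(p)$. One must also check $m(p)<\infty$: the zero locus of $p$ on the unit torus is empty or a finite set of points of finite vanishing order, so the integral converges, and this is where the structure of the dimer characteristic polynomial is used.

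Finally I would upgrade from toroidal boxes to the arbitrary F{\o}lner exhaustions allowed by $\toF$. After passing to checkerboard graphs, conditions $(i)$--$(iii)$ say the $G_n$ form a nested exhaustion of $\Gamma(\L)$ inside dilated fundamental domains with $|\partial G_n|/|G_n|\to0$, and $(iv)$ gives $|G_n|/c(K_n)\to1$. Because link projection graphs have bounded degree, deletion--contraction (Rayleigh) monotonicity sandwiches $\log\tau(\Gamma(K_n))$ between quantities that differ from $\log\tau$ of a sub-torus of the same density by $O\bigl(|\partial G_n|+c(K_n)-|G_n|\bigr)=o(c(K_n))$ by $(ii)$ and $(iv)$, while for the sub-exhaustion $G_n$ of the amenable periodic graph $\Gamma(\L)$ the normalized log spanning-tree count converges to the periodic entropy by the Benjamini--Schramm / amenable convergence theory of Lyons. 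Assembling the four steps yields $\frac{\log\det(K_n)}{c(K_n)}=\frac{\log\tau(\Gamma(K_n))}{c(K_n)}\to\frac{m(p)}{c(L)}$. The hard part is this last step: showing that arbitrary F{\o}lner subgraphs $G_n$ — not merely rectangular sub-tori — together with the crossings of $K_n$ not recorded in $G_n$, contribute only sub-exponentially, so every boundary and excess term is genuinely $o(c(K_n))$. This is exactly where amenability of $\Z^2$ and the bounded-degree structure of $G(\L)$ must be used in an essential way.
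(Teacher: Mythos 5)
This theorem is quoted from \cite{ck:det_mp} and is not proved in the present paper, so there is no internal proof to compare against. Your outline nevertheless matches the strategy of that reference: $\det(K_n)$ is identified with the number of perfect matchings of the (balanced) overlaid Tait graph --- precisely the Temperley graph you build from the checkerboard graph and its dual --- the toroidal asymptotic $\frac{1}{N^2}\log Z_N\to m(p)$ is the Kenyon--Okounkov--Sheffield input, and the upgrade from toroidal boxes to general F{\o}lner exhaustions is a boundary/deletion--contraction estimate of exactly the kind you sketch, using $|\partial G_n|/|G_n|\to 0$ and $|G_n|/c(K_n)\to 1$.
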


The following conjectures are motivated by Theorems~\ref{Thm:gmax} and
\ref{Thm:det_mp}:

\begin{conjecture}[Volume Density Conjecture]
\label{conj:vol_density}
Let $\L$ be any biperiodic alternating link, with alternating quotient
link $L$.  Let $K_n$ be alternating hyperbolic links such that
\mbox{$\displaystyle K_n\toF \L$}.  Then
\[
\lim_{n\to\infty}\frac{\vol(K_n)}{c(K_n)} = \frac{\vol((T^2\times I)-L)}{c(L)}
\]
\end{conjecture}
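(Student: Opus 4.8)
The plan is to bound $\lim \vol(K_n)/c(K_n)$ by the target density $v_\infty := \vol((T^2\times I)-L)/c(L)$ by proving that the finite complements converge geometrically to the biperiodic complement. Recall from \refthm{Torihedra}, \refthm{BiperiodicDecomp} and \reflem{Stellation} that $\RR^3-\L$ is the $\ZZ^2$--cover of $(T^2\times I)-L$, carries an explicit ideal decomposition, and has exactly $c(L)$ crossing arcs and volume $\vol((T^2\times I)-L)$ per fundamental domain, so that the volume density of the cover is precisely $v_\infty$; this identifies the geometric object to which the $S^3-K_n$ should limit. The governing principle is that hyperbolic volume behaves continuously under geometric convergence, so the whole problem reduces to showing that, based at a crossing lying deep inside $G_n$, the pointed manifolds $S^3-K_n$ converge geometrically to $\RR^3-\L$.

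For the a priori control needed to run a Gromov precompactness argument, I would use that each $S^3-K_n$ is hyperbolic by Menasco \cite{menasco}, and that the bipyramid decomposition (collapse bigons as in \reflem{BigonCollapse}, cone the resulting faces to $\pm\infty$, and stellate as in \reflem{Stellation}) together with the Adams estimate \cite[Theorem~2.1]{Adams:BipyramidVolume} bounds the volume contributed by each diagram face by that of the corresponding regular ideal bipyramid. This yields a uniform bound $\vol(K_n)\le \voct\, c(K_n)$ and, more importantly, uniform control on the local geometry of the decomposition away from $\partial G_n$, where by Definition~\ref{def:folner_converge}~(i) the diagram of $K_n$ agrees combinatorially with that of $\L$.

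Granting the pointed convergence, I would identify the limit by rigidity: the geometric limit is a complete hyperbolic $3$--manifold into which arbitrarily large combinatorial blocks of the decomposition of $\RR^3-\L$ embed, and by the hyperbolicity of $(T^2\times I)-L$ (Howie--Purcell \cite{HowiePurcell}) together with Mostow--Prasad rigidity this limit must be $\RR^3-\L$ itself. It then follows that, for any $\epsilon>0$ and $n$ large, the volume of $S^3-K_n$ over the block indexed by $G_n$ is within $\epsilon\,|G_n|$ of $(|G_n|/c(L))\,\vol((T^2\times I)-L)$; combining this with the F\o lner conditions $|\partial G_n|/|G_n|\to 0$ and $|G_n|/c(K_n)\to 1$ gives $\lim_n \vol(K_n)/c(K_n)=v_\infty$, as required.

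The hard part will be the geometric convergence itself---specifically, a lower bound on the injectivity radius at the basepoints that is uniform in $n$, ruling out collapsing and the development of short geodesics, and preventing volume from leaking into thin parts near $\partial G_n$. For the square weave (\refthm{gmax}) and the triaxial link this is controlled by the rigid, totally geodesic, right-angled structure of \refthm{square_triaxial}, which pins down the local geometry and supplies a sharp Agol--Storm--Thurston guts lower bound matching the bipyramid upper bound. For a general biperiodic alternating link neither the non-collapsing nor a sharp guts estimate is automatic: the bipyramid upper bound overshoots $v_\infty$ precisely when $L$ is not semi-regular, and Lackenby-type twist-number lower bounds are not sharp. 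Closing this gap is exactly the open content of the conjecture, which we are able to resolve here only for the triaxial link.
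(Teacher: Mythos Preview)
This statement is a \emph{conjecture} in the paper, not a theorem: the paper does not prove it in general and explicitly leaves it open. The only cases resolved are the square weave (Theorem~\ref{Thm:gmax}, from \cite{ckp:gmax}) and the triaxial link (Theorem~\ref{Thm:triaxial}), both of which depend essentially on the right-angled torihedral structure of Theorem~\ref{Thm:square_triaxial}. So there is no ``paper's own proof'' of Conjecture~\ref{conj:vol_density} to compare against, and your final paragraph is correct that what you have written is not a proof.

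Your proposal is candid about this: you outline a geometric-convergence strategy and then correctly identify the gap---no uniform injectivity-radius lower bound at the basepoints, and no matching sharp lower volume estimate for general $\L$. That diagnosis is accurate. It is worth noting, though, that even in the two settled cases the paper does \emph{not} proceed via pointed Gromov--Hausdorff convergence of $S^3-K_n$ to $\RR^3-\L$. For the triaxial link, the proof of Theorem~\ref{Thm:triaxial} instead transplants the right-angled circle pattern of Figure~\ref{Fig:triaxial_circles} into the machinery of \cite[Theorem~4.13 and Lemma~5.3]{ckp:gmax}: the lower bound on $\vol(K_n)$ comes from twice the volume of the right-angled checkerboard polyhedron $P_n$, and the disk-pattern argument of \cite[Lemma~5.3]{ckp:gmax} (with $\voct$ replaced by $10\vtet/3$) shows this already matches the target density in the limit. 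There is no appeal to rigidity of a geometric limit or to precompactness. Your route, even granting non-collapsing, would still require a sharp lower bound to pin $\liminf \vol(K_n)/c(K_n)$ at $v_\infty$, and it is exactly this sharpness that the right-angled structure supplies in the known cases and that is unavailable for arbitrary $\L$.
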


\begin{conjecture}[Toroidal Vol-Det Conjecture]
Let $\L$ be any biperiodic alternating link, with alternating quotient
link $L$.  Let $K_n$ be alternating hyperbolic links such that
\mbox{$\displaystyle K_n\toF \L$}.  Then
\[ \vol((T^2\times I)-L)  \leq 2\pi\, m(p(z,w)).\]
\end{conjecture}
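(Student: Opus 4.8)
The plan is to obtain the Toroidal Vol-Det inequality as a limiting (``thermodynamic'') version of the conjectural Vol-Det inequality $\vol(K)\le 2\pi\log\det(K)$ for alternating links $K\subset S^3$, using the approximating sequence $K_n$ as a bridge. First I would observe that, after dividing by $c(L)$, the asserted inequality says exactly that the volume density $\vol((T^2\times I)-L)/c(L)$ of $L$ is at most its (limiting) determinant density $2\pi\,m(p(z,w))/c(L)$ --- the biperiodic analogue of the $S^3$ Vol-Det inequality. Then, fixing a sequence $K_n\toF\L$ as in the hypothesis (Definition~\ref{def:folner_converge}), I would apply the finite Vol-Det inequality to each $K_n$, divide by $c(K_n)$, and let $n\to\infty$: by the Volume Density Conjecture (Conjecture~\ref{conj:vol_density}) the left-hand side converges to $\vol((T^2\times I)-L)/c(L)$, and by \refthm{det_mp} the right-hand side converges to $2\pi\,m(p(z,w))/c(L)$. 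Clearing the common factor $c(L)$ gives the claim. Schematically,
\[
\frac{\vol((T^2\times I)-L)}{c(L)}=\lim_{n\to\infty}\frac{\vol(K_n)}{c(K_n)}\le\lim_{n\to\infty}\frac{2\pi\log\det(K_n)}{c(K_n)}=\frac{2\pi\,m(p(z,w))}{c(L)}.
\]

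The main obstacle --- and the reason this is posed as a conjecture --- is that two of the three ingredients are themselves unproved in general: the finite Vol-Det inequality for alternating links is open, and the Volume Density Conjecture is precisely what the rest of the paper can establish only for special links (the square weave, via \refthm{gmax}, and the triaxial link, in Section~\ref{sec:asymptotic}). So an unconditional argument for a given $\L$ would need both a proof of the Volume Density Conjecture for that $\L$ and a proof of the finite Vol-Det inequality for the specific diagrams $K_n$ arising in the approximation (or some substitute for the latter).

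For a link like the triaxial link the situation is far more concrete, and this is the route I would actually take here: $\vol((T^2\times I)-L)$ is known explicitly from \refthm{BALvol} as an integer multiple of $\vtet$, and the characteristic polynomial $p(z,w)$ of the associated toroidal dimer model can be written down by hand, so the Toroidal Vol-Det inequality reduces to a single explicit comparison between a hyperbolic volume and a logarithmic Mahler measure, which can be checked numerically and then certified by elementary estimates on $m(p)$. The hard part of the general conjecture, which I expect to remain open, is that the inequality is asymptotically sharp --- it is an equality for the square weave --- so it admits no proof by crude volume-versus-crossing bounds and genuinely seems to require control of both the Volume Density Conjecture and the $S^3$ Vol-Det inequality.
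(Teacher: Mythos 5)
The statement you were asked to prove is stated in the paper as a \emph{conjecture}: the paper offers no proof of it, explicitly defers its study to forthcoming work, and only records that it holds (with equality) in two special cases --- the square weave, via \refthm{gmax}, and the triaxial link, via \refthm{triaxial}. So there is no proof in the paper to compare yours against, and your recognition that a general argument is out of reach with the available tools is correct.

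Your conditional derivation is sound and is exactly the heuristic that motivates the conjecture: if $\vol(K_n)\le 2\pi\log\det(K_n)$ held for each $K_n$ (the open Vol-Det inequality in $S^3$), then dividing by $c(K_n)$, passing to the limit, and invoking Conjecture~\ref{conj:vol_density} on the left and \refthm{det_mp} on the right yields the toroidal inequality after clearing $c(L)$; weak inequalities survive limits, so there is no gap in that schematic. Two small refinements: for the triaxial link you do not need numerics or estimates on $m(p)$ --- the paper's \refthm{triaxial} combines the exact volume $10\,\vtet$ per fundamental domain from \refthm{BALvol} with the exact identity $2\pi\, m(p(z,w)) = 10\,\vtet$ from \cite[Example 4.2]{ck:det_mp}, giving equality outright; and the same exactness holds for the square weave, where both densities equal $\voct$. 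Your closing observation that equality in these cases rules out any proof by crude volume-versus-crossing estimates is exactly the right reason the statement remains a conjecture.
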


The Toroidal Vol-Det Conjecture is studied in more detail in
forthcoming work \cite{ckl:bal_mm}.

Theorem~\ref{Thm:gmax} proves both conjectures with equality for the
square weave $\W$.  In Theorem~\ref{Thm:triaxial}, we establish both
conjectures with equality for the triaxial link as well.  By
Theorem~\ref{Thm:square_triaxial}, these are the only semi-regular
links such that right-angled torihedra give the complete hyperbolic
structure.

\begin{theorem}\label{Thm:triaxial}
Let $\L$ be the triaxial link.  Let $K_n$ be any alternating
hyperbolic link diagrams with no cycles of tangles such that
$K_n\toF\L$.  Then
\[ \lim_{n\to\infty} \frac{\vol(K_n)}{c(K_n)} =
\lim_{n\to\infty} \frac{2\pi\log\det(K_n)}{c(K_n)} =
\frac{\vol((T^2\times I)-L)}{c(L)} = \frac{10\,\vtet}{3}. \]
\end{theorem}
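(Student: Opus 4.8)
The plan is to prove the three displayed equalities from right to left. The rightmost one is immediate from \refthm{BALvol}: the triaxial link has $T_L$ equal to the trihexagonal tiling, so by Lemma~\ref{lemma:4valent}($ii$) a fundamental domain contains $H$ hexagons, $2H$ triangles and no squares; an Euler‑characteristic count ($f=3H$, $e=6H$, $v=3H$) gives $c(L)=3H$, while \refthm{BALvol}(2) gives $\vol((T^2\times I)-L)=10H\,\vtet$, so the ratio is $10\vtet/3$ (and $c(L)=3$ for the minimal fundamental domain).

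For the determinant density I would invoke \refthm{det_mp}: since $K_n\toF\L$, the limit $\lim\log\det(K_n)/c(K_n)$ equals $m(p(z,w))/c(L)$, where $p$ is the characteristic polynomial of the toroidal dimer model attached to $\L$. It then remains to evaluate $m(p(z,w))$. As in the square‑weave case this equals the per‑vertex spanning‑tree entropy of the Tait (checkerboard) graph of the alternating trihexagonal diagram, which up to planar duality is the triangular lattice; its value is $5\vtet/\pi$ — this is the classical evaluation of the triangular‑lattice tree entropy in terms of $L(\chi_{-3},2)$, combined with $\vtet=\tfrac{3\sqrt3}{4}L(\chi_{-3},2)$. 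Hence $2\pi\,m(p(z,w))/c(L)=\tfrac{2\pi}{3}\cdot\tfrac{5\vtet}{\pi}=10\vtet/3$, as required.

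The substantive content is the volume density, which I would obtain from matching bounds, adapting the argument of \cite{ckp:gmax} for the square weave — the triaxial link being the natural second case precisely because, by \refthm{square_triaxial}, it too carries a right‑angled (totally geodesic checkerboard) structure. For the upper bound I would use the stellated bipyramid triangulation of $S^3-K_n$ (\reflem{Stellation}, extended to links in $S^3$ in \cite{Adams:bipyramids}) together with the bipyramid volume bound $\vol(S^3-K_n)\le\sum_{F}\vol(B_{|F|})$, where $F$ runs over faces of the collapsed diagram and $B_m$ is the maximal‑volume (regular) ideal $m$‑bipyramid (\cite{Adams:BipyramidVolume}, as in \reflem{VolMax}). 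Every face of $K_n$ lying well inside the region where the diagram agrees with the trihexagonal tiling is a triangle or hexagon, contributing at most $2\vtet$ or $6\vtet$ respectively; counting crossings against such faces, the numbers of hexagons and triangles are $\sim c(K_n)/3$ and $\sim 2c(K_n)/3$, while the remaining faces have total size $o(c(K_n))$ — forced by $|G_n|/c(K_n)\to1$ and the F\o lner condition of Definition~\ref{def:folner_converge} — and $\vol(B_m)=O(m)$. This yields $\vol(K_n)\le(10\vtet/3+o(1))\,c(K_n)$.

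For the lower bound — the crux — I would construct on the (3--2 modified) bipyramid triangulation $\T'$ of $S^3-K_n$ an angle structure that restricts to the regular structure of \reflem{NoBigonsAngles} on every tetrahedron lying over the trihexagonal region; then all interior edge equations hold automatically, so only the $O(|\partial G_n|)$ edges meeting $\partial G_n$ remain to be balanced. Since any angle structure has volume at most the hyperbolic volume (a standard consequence of the Schl\"afli formula in the Casson--Rivin theory, see \cite{rivin, futer-gueritaud:angles}), $\vol(K_n)\ge(6H_n+2T_n)\,\vtet=(10\vtet/3-o(1))\,c(K_n)$, which with the upper bound completes the volume density and hence the theorem. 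The main obstacle is making this construction rigorous near $\partial G_n$: one must show the mixed assignment (regular on the interior tetrahedra, to be determined on the few boundary tetrahedra) can be completed to a genuine angle structure, with all dihedral angles in $(0,\pi)$ and all edge sums $2\pi$. This is exactly the step where the no‑cycle‑of‑tangles hypothesis — and, in the guise of \refthm{square_triaxial}, the right‑angled structure of $\L$ that makes the boundary edges behave as for the square weave — are used, and I expect it to require the same kind of careful local analysis of the non‑trihexagonal faces as in \cite{ckp:gmax}.
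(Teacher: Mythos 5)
Your first two equalities match the paper: the value $10\vtet/3$ for the volume density of $L$ is \refthm{BALvol}, and the determinant density is \refthm{det_mp} together with the evaluation $m(p(z,w))=5\vtet/\pi$, which the paper simply cites as Example~4.2 of \cite{ck:det_mp} and which your triangular-lattice tree-entropy computation correctly reproduces. Your bipyramid upper bound on $\vol(K_n)$ is also sound, and is consistent with the mechanism behind the upper bound in part~(4) of \refthm{hyperbolic}.

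The genuine gap is in your lower bound, in two places. First, the stellated bipyramid triangulation of $S^3-K_n$ is \emph{not} an ideal triangulation: the two coning apices $\pm\infty$ are material points of $S^3-K_n$, not cusps, so the inequality ``volume of an angle structure $\le$ hyperbolic volume'' from Casson--Rivin/Futer--Gu\'eritaud does not apply to your $\T'$ as set up --- and that inequality is the engine of your entire lower bound. (In the paper this issue does not arise because the bipyramid triangulation is only used for $(T^2\times I)-L$, where $\pm\infty$ are genuinely ideal.) Second, even with an ideal triangulation in hand, freezing the regular angles on every tetrahedron over the trihexagonal region and then ``balancing'' the $O(|\partial G_n|)$ remaining edges is a global feasibility problem, not a local one: an edge incident to both interior and boundary tetrahedra already receives a fixed contribution from the interior side, and nothing prevents that contribution from reaching or exceeding $2\pi$, or from leaving less room than the remaining tetrahedra need, in which case no extension with angles in $(0,\pi)$ exists. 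You would at least have to taper the interior angles near $\partial G_n$ and control the volume lost in doing so; that is a substantial argument you have not supplied, and it is not what the no-cycle-of-tangles hypothesis is used for.

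The paper takes a different route that avoids both problems: by \refthm{square_triaxial} the torihedra of $\L$ are right-angled, giving an explicit orthogonal circle pattern; the lower bound $\vol(K_n)\ge 2\vol(P_n)$, where $P_n$ is the checkerboard polyhedron of $K_n$ realized with all right angles, is quoted from Theorem~4.13 of \cite{ckp:gmax} (a guts-of-essential-surfaces argument, not an angle-structure argument); and the analogue of Lemma~5.3 of \cite{ckp:gmax}, with the square-weave circle pattern replaced by the triaxial one and $\voct$ by $10\vtet/3$, shows $2\vol(P_n)/c(K_n)\to 10\vtet/3$ via convergence and rigidity of circle patterns, which absorbs the boundary effects by compactness rather than by an explicit angle assignment. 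To repair your proof you should either pass to that right-angled polyhedron bound, or produce a genuinely ideal triangulation of $S^3-K_n$ and prove the extension of the angle structure across $\partial G_n$ actually exists.
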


\begin{proof}
By Theorem~\ref{Thm:BALvol},
\[ \frac{\vol((T^2\times I)-L)}{c(L)}=\frac{10\,\vtet}{3}.\]
By \cite[Example 4.2]{ck:det_mp},
\[ \lim_{n\to\infty}\frac{2\pi\log\det(K_n)}{c(K_n)} = \frac{2\pi\, m(p(z,w))}{c(L)} =\frac{10\,\vtet}{3}.\]
It remains to prove Conjecture~\ref{conj:vol_density} for the triaxial
link $\L$.  Namely,
\[ K_n \toF \L \implies {\lim_{n\to\infty}\frac{\vol(K_n)}{c(K_n)} = \frac{\vol((T^2\times I)-L)}{c(L)}}. \]

We adapt the proof of \cite[Theorem 1.4]{ckp:gmax}, which proves
Conjecture~\ref{conj:vol_density} for $\W$.

By Theorem~\ref{Thm:square_triaxial}, the checkerboard surfaces of
$\L$ are totally geodesic, with the faces of the torihedra lifting to
totally geodesic hyperplanes in the universal cover $\HH^3$ of
$\RR^3-\L$, meeting at right angles. This forces these hyperplanes to
meet $\partial \HH^3 = \widehat\CC$ in the circle pattern shown in
\reffig{triaxial_circles}.

\begin{figure}
  \centering 
  \includegraphics{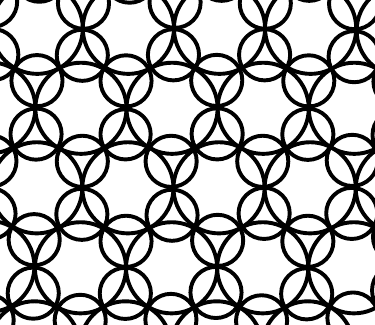}
  \caption{Circle pattern for $P(\L)$ for the triaxial link $\L$.  }
  \label{Fig:triaxial_circles}
\end{figure}

Now proceed as in \cite{ckp:gmax}. If $K_n\subset S^3$ is a sequence
of alternating hyperbolic links such that $K_n\toF\L$, then by
\cite[Theorem~4.13]{ckp:gmax} the volumes of $K_n$ are bounded below
by twice the volume of the polyhedron obtained from the checkerboard
polyhedron of $K_n$ by assigning right angles to all the edges. Denote
this by $P_n$.

Now we repeat the proof of \cite[Lemma~5.3]{ckp:gmax}, replacing the
disk pattern coming from $\W$ with that of \reffig{triaxial_circles}
coming from $\L$. In that proof, each instance of $\voct$ (the volume
density of $\W$) must be replaced by $(10\vtet/3)$, the volume density
of $\L$.

Finally, step through the proof of Theorem~1.4 in
\cite[section~5]{ckp:gmax}. Again, first check that the hypotheses are
satisfied for the lemma analogous to \cite[Lemma~5.3]{ckp:gmax} (with
$\voct$ replaced by $10\vtet/3$). Note for condition (2), the argument
is the same but the constant $4$ is replaced by $6$ (this does not
affect the limit). For (1), $|\partial B(x,\ell)| = 6\ell$. Otherwise,
the argument is identical. Then this lemma and
\cite[Theorem~4.13]{ckp:gmax} imply
\[ \lim_{n\to \infty} \frac{\vol(K_n)}{c(K_n)} = \frac{10\vtet}{3}.\qedhere \]
\end{proof}


\section{Hyperbolicity for alternating links in $T^2\times I$}
\label{sec:hyperbolic}

In this section, we generalize the results on hyperbolicity and
triangulations to wider classes of links on $T^2\times I$. We prove
that if a link $L$ in $T^2\times I$ admits a certain kind of
alternating diagram on the torus, then $(T^2\times I)-L$ is
hyperbolic. Moreover, for a (possibly incomplete) hyperbolic structure
on the complement, we find a triangulation that satisfies Thurston's
gluing equations.  We show that this triangulation is positively
oriented and unimodular.  This leads to lower and upper bounds on the
hyperbolic volume of $(T^2\times I)-L$, denoted by $\vol(L)$.

To describe the links, we need a few definitions. First, the
definition of connected sums of knots in $S^3$ can be extended to
knots in $T^2\times I$. We are concerned with knots with reduced
diagrams that are not connected sums, as in the following definition.

\begin{definition}\label{Def:WeaklyPrime}
A diagram is \emph{weakly prime} if whenever a disk embedded in the
diagram surface meets the diagram transversely in exactly two edges,
then the disk contains a simple edge of the diagram and no crossings.
\end{definition}

In addition, we will also need Definition~\ref{def:cycle_of_tangles} and the following:

\begin{definition}\label{def:unimodular}
A geodesic ideal triangulation satisfying the edge gluing equations is
called {\em unimodular} if every tetrahedron has an edge parameter $z$
that satisfies $|z|=1$.
\end{definition}

By Equation~(\ref{eqn:edge-param}) below, a tetrahedron is unimodular if and only if its vertex link triangles are isosceles.

\begin{definition}\label{def:volbp}
Let $B_n$ denote the hyperbolic regular ideal bipyramid whose link
polygons at the two coning vertices are regular $n$--gons. For a face
$f$ of $G(L)$, let $|f|$ denote the degree of the face. Let $L$ be a
link in $T^2\times I$ with an alternating diagram on $T^2\times\{0\}$,
and let $T_L$ be the toroidal graph defined in
Lemma~\ref{Lem:BigonCollapse}.  Define the \emph{bipyramid volume} of
$L$ as
\[ \volbp(L) = \sum_{f \in \{\text{faces of} \ T_L\}}\vol(B_{|f|}) . \]
\end{definition}
Note that for all semi-regular links, Theorem \ref{Thm:BALvol} implies
$\vol(L)= \volbp(L)$.

\begin{definition}\label{def:volp}
Let $L$ be a link in $T^2\times I$ with an alternating diagram on
$T^2\times \{0\}$, and let $P(L)$ be the torihedron as in Theorem
\ref{Thm:Torihedra}. Assume $P(L)$ admits an ideal right-angled
hyperbolic structure.
Define the \emph{right-angled volume} of $L$ as
\[ \volp(L) = 2 \vol(P(L)). \]
\end{definition}
Note that for the square weave and triaxial link,
Theorem~\ref{Thm:square_triaxial} implies $\vol(L)= \volp(L)$.

Suppose $K$ is a link in $S^3$ with a prime, alternating,
twist-reduced diagram with no cycle of tangles, and with bigons
removed.  In \cite[Theorem~4.13]{ckp:gmax}, we proved that the two
checkerboard polyhedra coming from $K$, when given an ideal hyperbolic
structure with all right angles, have volume providing a lower bound
for $\vol(S^3-K)$.

In Theorem~\ref{Thm:hyperbolic} below, we extend
\cite[Theorem~4.13]{ckp:gmax} to any link in $T^2\times I$ satisfying
similar conditions.  The methods involved in proving
Theorem~\ref{Thm:hyperbolic} are different from those used in
\cite{ckp:gmax}, which relied on volume bounds via guts of
3--manifolds cut along essential surfaces.  Here, the proof of
Theorem~\ref{Thm:hyperbolic} involves circle packings, geometric
structures on triangulations, and the convexity of volume.

\begin{theorem}\label{Thm:hyperbolic}
Let $L$ be a link in $T^2\times I$ with a weakly prime alternating
diagram on $T^2\times \{0\}$ with no bigons.  If $L$ has no cycle of
tangles, then
\begin{enumerate}
\item\label{Itm:Hyp} $(T^2\times I)-L$ is hyperbolic.
\item\label{Itm:Trian} $(T^2\times I)-L$ admits a (possibly
  incomplete) hyperbolic structure obtained from an ideal, positively
  oriented, unimodular triangulation.
\item\label{Itm:RightAngled} Under the structure of
  item~\eqref{Itm:Trian}, the torihedron $P(L)$ is right-angled.
\item\label{Itm:Vol} $\volp(L) \leq \vol(L) \leq \volbp(L)$.
\end{enumerate}
\end{theorem}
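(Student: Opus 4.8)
The plan is to build the argument around the stellated-bipyramid triangulation $\T$ of \reflem{Stellation}, upgrade it to a geometric triangulation by feeding it into a circle-packing machine, and then read off each of the four conclusions from properties of that triangulation. First I would observe that the toroidal graph $T_L = G(L)$ (no bigons, so these coincide) is a $4$-valent cellular graph on $T^2$, so by the Andreev--Thurston circle packing theorem (or Rivin's characterization of ideal polyhedra via angle structures) there is an essentially unique circle packing on the torus whose tangency/nerve structure realizes $T_L$; equivalently, there is a hyperbolic cone structure on the bipyramid over each face. The key point is that for a right-angled torihedron we need each $n$-gon face of $T_L$ to cone to a bipyramid $B_n$ with horizontal dihedral angle $2\pi/n$ and vertical angles $\pi/n$, and the compatibility of these around every vertex of $T_L$ is exactly the condition that the packing exists — this is where weak primeness and the no-cycle-of-tangles hypothesis get used, to rule out the degenerate configurations (a disk meeting the diagram in two or four edges) that would obstruct the existence or force a non-geometric solution, exactly as in \cite{ckp:gmax, HowiePurcell}.

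Granting the circle packing, I would then: (a) stellate each bipyramid $B_{|f|}$ into $|f|$ tetrahedra as in \reflem{Stellation}, assigning the dihedral angles $2\pi/n$ on horizontal and stellating edges and $\pi/n$ on vertical edges (these lie in $(0,\pi)$ precisely because $n\ge 3$, using that there are no bigons), and check that Thurston's gluing equations hold — the angle-sum computations are the same local computations as in \reflem{NoBigonsAngles}, with equation~\eqref{eq:4valent-anglesum} replaced by the appropriate vertex relation coming from the circle packing rather than the Euclidean one. This gives a positively oriented solution to the gluing equations, hence a (possibly incomplete) hyperbolic structure, proving \eqref{Itm:Hyp} and the first half of \eqref{Itm:Trian}. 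For unimodularity: each tetrahedron in $\T$ is a "stellation wedge" of a bipyramid, which by symmetry of the bipyramid has two equal dihedral angles, so its vertex-link triangles are isosceles; by the remark following \refdef{unimodular} (Equation~(\ref{eqn:edge-param})) this is equivalent to having an edge parameter of modulus one, giving \eqref{Itm:Trian} in full. For \eqref{Itm:RightAngled}: assembling the upper tetrahedra over all faces reconstitutes the upper torihedron $P(L)$, and splitting each $B_n$ into its two pyramids halves the horizontal angle to $\pi/n$, so when two faces are glued along a vertical face the resulting dihedral angle on that edge of $P(L)$ is a sum $\pi/n_1 + \pi/n_2$; the no-cycle-of-tangles and weak-primeness conditions force the circle packing to be the right-angled one, i.e.\ every such sum equals $\pi/2$ — this is the content of the right-angled structure on $P(L)$ inherited from the packing, and it is essentially the toroidal analogue of the classical fact that the checkerboard polyhedra of an alternating link are combinatorially right-angled.

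Finally, for the volume bounds \eqref{Itm:Vol}. The upper bound $\vol(L) \le \volbp(L)$: the stellated-bipyramid angle assignment above is an angle structure on $\T$, and by Casson--Rivin the actual complete hyperbolic volume is the \emph{maximum} of the volume functional over all angle structures; but by \cite[Theorem~2.1]{Adams:BipyramidVolume} the volume of any angle structure on a single $n$-bipyramid is at most $\vol(B_n)$, so summing over faces the maximum is at most $\sum_f \vol(B_{|f|}) = \volbp(L)$, exactly as in \reflem{VolMax} — the difference from the semi-regular case is only that now we take the maximum rather than exhibiting equality, since $T_L$ need not be a Euclidean tiling. The lower bound $\volp(L) \le \vol(L)$: this is the toroidal transcription of \cite[Theorem~4.13]{ckp:gmax}. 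Here I would cut $(T^2\times I)-L$ along the two checkerboard surfaces to recover the two torihedra, give each the right-angled ideal structure from \eqref{Itm:RightAngled}, and argue — via the standard decomposition-into-ideal-pieces and convexity-of-volume estimate, or via a Cauchy--Schwarz/straightening argument on the angle structures — that doubling $\vol(P(L))$ underestimates the volume of the glued-up manifold. The main obstacle, and the place where the argument is genuinely nontrivial rather than bookkeeping, is establishing that the circle packing exists and is the right-angled one: one must show that weak primeness together with the absence of a cycle of tangles rules out exactly the obstructing annular/disk configurations in $T_L$, so that Andreev--Thurston (or the Casson--Rivin angle-structure existence criterion) applies and produces the geometric, right-angled solution rather than a collapsed or non-positively-oriented one. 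Everything else — the gluing-equation check, unimodularity from isosceles vertex links, and the two volume inequalities — then follows by adapting the semi-regular arguments of Sections~\ref{sec:semiregular} and the $S^3$ arguments of \cite{ckp:gmax}.
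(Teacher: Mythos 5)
Your overall architecture (circle pattern on the torus $\Rightarrow$ right-angled torihedron $\Rightarrow$ geometric stellated triangulation $\Rightarrow$ volume bounds) matches the paper's, and your treatment of the upper bound in \refitm{Vol} is essentially correct. But there are genuine gaps at three places. First, the circle-pattern input: what is needed is not a tangency packing whose nerve is $T_L$, but an orthogonally intersecting circle pattern whose circles \emph{circumscribe the faces} of $G(L)$; the existence criterion is Bobenko--Springborn's theorem \cite{bs2004} for circle patterns on the torus with prescribed exterior intersection angles $\theta\equiv\pi/2$, and the whole point of the hypotheses is that weak primeness rules out curves meeting the diagram in two edges (parity rules out three), while ``no cycle of tangles'' handles the equality case $n=4$. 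You identify this as the crux but do not supply the verification. Moreover, the right-angledness of $P(L)$ in \refitm{RightAngled} comes directly from the prescribed orthogonal intersection angles, \emph{not} from sums $\pi/n_1+\pi/n_2=\pi/2$: that identity holds only for the square weave and the triaxial link (this is exactly the content of \refthm{square_triaxial}), so the mechanism you describe would prove the theorem only for those two links.

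Second, your angle assignment of $2\pi/n$ on stellating edges and $\pi/n$ on vertical edges is wrong in general: the faces of $T_L$ need not be regular polygons, so the tetrahedra are not congruent wedges of a regular bipyramid (and even in the regular case the vertical angle is $(n-2)\pi/(2n)$, not $\pi/n$). The shapes must instead be read off the actual circle pattern: taking the circumcenter of each face as the stellating vertex makes every triangle of the central triangulation isosceles, which gives both positive orientation and unimodularity via Equation~(\ref{eqn:edge-param}); and the horizontal-edge gluing equation is not ``the same local computation as \reflem{NoBigonsAngles}'' but requires the observation that the four link triangles at a horizontal edge assemble into two right-angled kites, whose non-congruent vertex angles sum to $\pi$. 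Third, the lower bound $\volp(L)\le\vol(L)$ is not established by your sketch; the paper's argument is that the incomplete structure furnishes a ${\rm PSL}(2,\CC)$-representation of volume $2\vol(P(L))$, and Francaviglia's theorem \cite{francaviglia2} bounds the volume of the complete structure below by the volume of any representation. Without some such input, ``doubling $\vol(P(L))$ underestimates the glued-up volume'' is an assertion, not a proof.
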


Note that by Theorems \ref{Thm:BALvol} and \ref{Thm:square_triaxial},
both inequalities in \eqref{Itm:Vol} become equalities for the square
weave and the triaxial link: all three give the volume of the complete
structure. Thus both upper and lower bounds of
Theorem~\ref{Thm:hyperbolic}, \eqref{Itm:Vol} are sharp.  In general,
the bounds in \eqref{Itm:Vol} are volumes of incomplete ideal
hyperbolic structures on $(T^2\times I)-L$.

\begin{remark}
Theorem~\ref{Thm:hyperbolic} should be compared to the results of
\cite{HowiePurcell}. That paper also implies that the links in
\refthm{hyperbolic} are hyperbolic, and gives a lower bound on volume
in terms of the number of twist regions of the diagram. Since the
diagram here has no bigons, this also amounts to a lower volume bound
in terms of the crossing number. However, the results of
\refthm{hyperbolic} are stronger in this case because they give an
explicit hyperbolic structure, although that structure is likely
incomplete. Moreover, the volume bound of \refthm{hyperbolic} is known
to be sharp for the square weave and triaxial link.
\end{remark}

The proof of \refthm{hyperbolic} will proceed in the order
\eqref{Itm:RightAngled}, \eqref{Itm:Trian}, \eqref{Itm:Hyp},
\eqref{Itm:Vol}. The proof relies on a fundamental result about the
existence of certain circle patterns on the torus, due to Bobenko and
Springborn \cite{bs2004}.
The following special case of \cite[Theorem 4]{bs2004} applies.

\begin{theorem}[\cite{bs2004}]\label{Thm:bobenko-springborn}
Suppose $G$ is a 4-valent graph on the torus $T^2$, and $\theta\in
(0,2\pi)^E$ is a function on edges of $G$ that sums to $2\pi$ around
each vertex. Let $G^*$ denote the dual graph of $G$. Then there exists
a circle pattern on $T^2$ with circles circumscribing faces of $G$
(after isotopy of $G$) and having exterior intersection angles
$\theta$, if and only if the following condition is satisfied:
\begin{itemize}
\item[] Suppose we cut the torus along a subset of edges of
  $G^*$, obtaining one or more pieces. For any piece that is a disk,
  the sum of $\theta$ over the edges in its boundary must be at least
  $2\pi$, with equality if and only if the piece consists of only one
  face of $G^*$ (only one vertex of $G$).
\end{itemize}
The circle pattern on the torus is uniquely determined up to
similarity.
\end{theorem}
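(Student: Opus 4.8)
The plan is to prove this by a variational principle, following the method of Colin de Verdi\`ere as developed by Bobenko and Springborn for circle patterns. Since the ambient surface is the flat torus, the relevant patterns are Euclidean, and the global similarity ambiguity in the conclusion will correspond to an additive ambiguity in the natural coordinates. First I would parametrize candidate patterns by the logarithmic radii $u_f=\log r_f$, one real variable for each face $f$ of $G$ (equivalently, each circumscribed circle). For a pair of adjacent faces $f,f'$ meeting along an edge $e$ with prescribed exterior intersection angle $\theta_e$, elementary plane geometry of the \emph{kite} spanned by the two centers and the two circle intersection points expresses the two kite half-angles as explicit functions of $u_f-u_{f'}$ and $\theta_e$. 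A circle pattern realizing $\theta$ is then exactly an assignment of radii for which, around every vertex of $G$, the incident kite angles close up to $2\pi$; these are the Euclidean analogue of Thurston's cone-angle equations.

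Second, I would introduce a functional $E(u)$ whose gradient records the angle defects: $\partial E/\partial u_f$ equals $2\pi$ minus the total kite angle subtended at the circle $f$. One assembles $E$ as a sum of one Milnor/Lobachevsky-type term per edge plus linear terms, engineered so that its critical points are precisely the closed-up patterns. The central analytic fact is that $E$ is convex, and strictly convex transverse to the one-dimensional direction $u\mapsto u+\mathrm{const}$ of global rescaling. I would establish this by computing the Hessian and recognizing it as a weighted graph-Laplacian quadratic form, with nonnegative edge weights coming from derivatives of the kite angles, whose only null direction is the constant vector $\mathbf{1}$. Strict convexity modulo scaling then immediately yields the asserted uniqueness up to similarity.

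Third, and this is the crux, I would reduce existence of the pattern to existence of an interior minimizer of $E$ on the quotient of the log-radius space by the line of constant vectors, and identify the combinatorial feasibility condition as precisely the statement that $E$ is coercive on this quotient. The point is to compute the asymptotic behavior of $E$ along a ray $u+t\,\mathbf{1}_A$, $t\to+\infty$, where $\mathbf{1}_A$ is the indicator of a set $A$ of faces. Along such a ray the circles of $A$ inflate relative to the rest, the relevant kite angles degenerate, and the recession slope of $E$ becomes a linear expression whose coefficient is governed by summing $\theta_e$ over those edges of $G$ dual to the cut separating $A$ from its complement. Requiring this slope to be positive in every nonconstant direction is exactly the stated inequality that the $\theta$-sum over the boundary dual edges of each disk piece exceed $2\pi$, with the borderline value $2\pi$ isolating the single-face pieces that generate only the harmless scaling direction. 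Thus the cut condition holds if and only if every minimizing sequence stays bounded on the quotient, producing an interior minimizer and hence a pattern.

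Finally, the ``only if'' direction I would dispatch directly: given an honest circle pattern, cut the torus along a subset of edges of $G^*$ and apply a Gauss--Bonnet/angle-count to any resulting disk piece; the geodesic boundary contributions force the boundary $\theta$-sum to be at least $2\pi$, with equality exactly when the piece is a single face. I expect the main obstacle to be the coercivity analysis of the third step: matching the recession function of $E$ to the dual-graph cut sums, and controlling the equality case so that the only degenerate asymptotic directions are the global-scaling ones, requires delicate combinatorial bookkeeping of which kite angles vanish in each limiting direction and of when the degenerating region is actually a disk. By contrast, the convexity and uniqueness steps are robust once the functional $E$ is correctly set up.
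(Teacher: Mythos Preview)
Your proposal outlines a genuine proof strategy, but note that the paper does not itself prove this theorem: it is quoted verbatim as a special case of \cite[Theorem~4]{bs2004} and then applied as a black box in the proof of Theorem~\ref{Thm:hyperbolic}. So there is no ``paper's own proof'' to compare against here.

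That said, your sketch is essentially the argument Bobenko and Springborn give in the cited reference: parametrize by logarithmic radii, build a convex functional from dilogarithm/Lobachevsky-type terms whose critical points encode the angle-closure conditions, identify strict convexity modulo the constant direction to get uniqueness up to similarity, and then characterize coercivity (hence existence of an interior minimizer) in terms of boundary $\theta$-sums over dual-graph cuts. Your identification of the coercivity bookkeeping as the delicate step is accurate. The one place you should be careful is that the stated condition quantifies over \emph{all} disk pieces obtained from \emph{any} subset of dual edges, not just over indicator directions $\mathbf{1}_A$ for connected $A$; the recession analysis has to handle arbitrary limiting directions in the quotient, and reducing these to the combinatorial cut condition requires a little more than a single $\mathbf{1}_A$ ray. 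But this is a refinement of an argument you clearly already have in mind, not a gap in the approach.
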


\begin{proof}[Proof of \refthm{hyperbolic}]
For $L$ as stated, let $G(L)$ be its projection graph on
$T^2\times\{0\}$, and let $\theta(e)=\pi/2$ for every edge $e$ in
$G(L)$.  For this choice of angles, we now verify the condition of
Theorem~\ref{Thm:bobenko-springborn}.  This will prove the existence
of an orthogonally intersecting circle pattern that is combinatorially
equivalent to $G(L)$.

Let $C$ be a simple closed curve in $T^2\times \{0\}$ that intersects
$n$ edges of $G(L)$ transversely, and bounds a disk in
$T^2\times\{0\}$.  Since $G(L)$ is weakly prime, $n\geq 3$. If $n\geq
5$, the angle sum is greater than $2\pi$. Hence, it remains only to
check the cases $n=3$ and $n=4$.

The curve $C$ bounds a disk $D$. Let $V_I$ denote the number of
vertices of $G(L)$ that lie in $D$, and let $E_I$ denote the number of
edges of $G(L)$ inside $D$ disjoint from $C$. Because $G(L)$ is
4-valent, $n+2E_I = 4V_I$ . It follows that $n$ is even, ruling out
$n=3$.

So suppose $n=4$. In this case, the angle sum equals $2\pi$. Hence,
the condition of Theorem~\ref{Thm:bobenko-springborn} is satisfied if
and only if the disk bounded by $C$ contains exactly one crossing.
Since $G(L)$ has no bigons, this condition is equivalent to the
condition that $L$ has no cycle of tangles, which holds by
hypothesis. Thus \refthm{bobenko-springborn} applies.

Therefore, there exists an orthogonal circle pattern on the torus with
circles circumscribing the faces of $G(L)$.  Lifting this circle
pattern to the universal cover of the torus defines an orthogonal
biperiodic circle pattern on the plane. Considered as the plane at
infinity for ${\mathbb H}^3$, this circle pattern defines a
right-angled biperiodic ideal hyperbolic polyhedron in $\HH^3$. The
torihedron $P(L)$ is its toroidal quotient, which is realized as an
ideal right-angled hyperbolic torihedron, as required for
Theorem~\ref{Thm:hyperbolic} part \eqref{Itm:RightAngled}.

\begin{figure}
\begin{center}
\begin{tabular}{ccc}
  \includegraphics{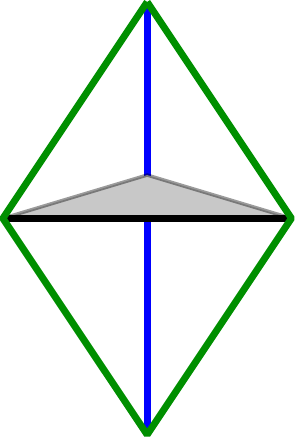} &
  \includegraphics{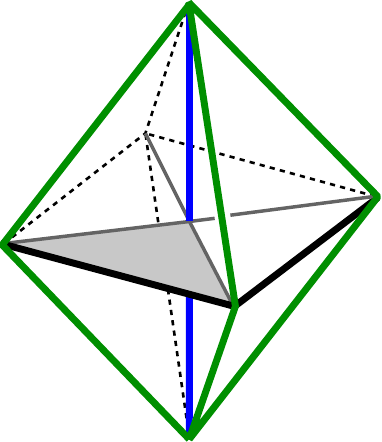} &
  \includegraphics{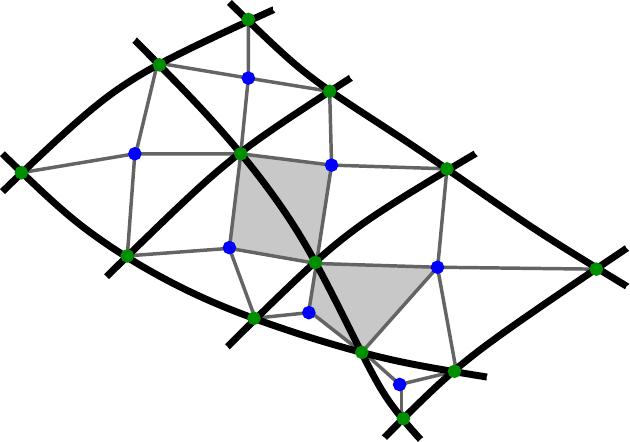}\\
  & & \\
  (a) & (b) & (c) \\
\end{tabular}
\end{center}
\caption{(a) A tetrahedron in $\T$ with stellating (blue), vertical (green), and horizontal (black) edges.
  A link triangle for an ideal vertex at $\infty$ is shaded.
  (b) Tetrahedra are glued at the stellating edge, centrally triangulating faces of $G(L)$ on $T^2\times\{0\}$.
  (c) Graphs $G(L)$ (black) and $G^c_L$ (gray and black).
  Shaded triangles indicate the four tetrahedra glued along one horizontal edge.}
\label{Fig:quad-graph}
\end{figure}

Now, let $\T$ be the stellated bipyramid triangulation. The
right-angled structure can be used to assign shape parameters in $\CC$
to the tetrahedra of $\T$.  In particular, let $G^c_L$ be the central
triangulation of $G(L)$, given by adding a central vertex in each face
and edges in each face to triangulate the faces; see
\reffig{quad-graph}.  Since the condition of
Theorem~\ref{Thm:bobenko-springborn} holds, each face of $G(L)$ is
inscribed in a geometric circle, such that the center of the circle
lies in the interior of the face.  Thus, we may take the center of the
circle to be the central vertex of $G_L^c$.  Therefore, every triangle
of $G_L^c$ is isosceles, with the two equal edges coming from $G^c_L -
E(G(L))$.  When the isosceles triangles are paired, they form
right-angled kites, which is the quad graph of the orthogonal circle
pattern.

Each triangle of $G^c_L$ corresponds to a unique tetrahedron in $\T$.
We assign edge parameters to a tetrahedron of $\T$ as follows. Recall
that for an ideal hyperbolic tetrahedron, a \emph{vertex link
  triangle} is the boundary of a horoball neighborhood of an ideal
vertex. We view the triangles of $G^c_L$ as vertex link triangles for
$\T$. If the angles of the link triangle are $\alpha$, $\beta$,
$\gamma$ in clockwise order, then the edge parameter of the edge with
dihedral angle $\alpha$ is given by
\begin{equation}\label{eqn:edge-param}
z(\alpha)=e^{i\alpha} \sin(\gamma)/\sin(\beta).
\end{equation}
This assigns edge parameters to tetrahedra in $\T$. 

Since every angle of every triangle is in $(0,\pi)$, every edge
parameter has positive imaginary part; i.e.\ all the tetrahedra are
positively oriented. Moreover, because every triangular face of
$G^c_L$ is isosceles, with the congruent angles adjacent to the edges
of $G(L)$, equation (\ref{eqn:edge-param}) implies that the edge
parameter at the stellating edge of every tetrahedron in $\T$ equals
$e^{i\alpha}$ for some $\alpha$. Since every tetrahedron has an edge
parameter which lies on the unit circle, the ideal triangulation is
unimodular.

To show the ideal tetrahedra with these edge parameters give a
(possibly incomplete) hyperbolic structure on $(T^2\times I)-L$, we
will show that the edge gluing equations are satisfied for each of the
three types of edges (stellating, vertical and horizontal).

First, note that the angle sum around a stellating edge is $2\pi$
because of the planar embedding of $G^c_L$. If the angles are
$\alpha_1, \dots, \alpha_k$, then the corresponding edge gluing
equation is $e^{i\alpha_1}\dots e^{i\alpha_k} = 1$. Hence, the edge
gluing equations are satisfied at the stellating edges.

At a vertical edge, the planar embedding again implies that the angle
sum is $2\pi$, satisfying the rotational part of the edge gluing
equation. Because the triangles of the graph $G^c_L$ around a vertex
of $G(L)$ are exactly the vertex link triangles of the ideal
tetrahedra adjacent to the vertical edge, the tetrahedra glue without
translation along the edge, hence rotational and translational parts
of the equation are both satisfied.

Since $L$ has no bigons, the horizontal edges of $\T$ are identified
in pairs that share a common vertex on the torihedron $P(L)$ (see
Figure \ref{Fig:4valent-polyhedron}).  Hence, there are four
tetrahedra whose horizontal edges are identified to one edge in $\T$.
For each tetrahedron in $\T$, each horizontal edge is opposite to the
stellating edge, so it has the same edge parameter $e^{i\alpha}$ and
thus is unimodular.

For the four tetrahedra whose horizontal edges are identified, their
link triangles glue along the horizontal edges to form two
right-angled kites, whose diagonals are these edges. See
Figure~\ref{Fig:quad-graph}.  The dihedral angle at the horizontal
edge equals the corresponding vertex angle of the kite.  Since every
kite is right-angled, its non-congruent vertex angles sum to $\pi$.
So for each horizontal edge, the angle contribution from these four
tetrahedra is $2\pi$.

Now, because the horizontal edge parameters are unimodular, these edge
parameters multiply to $1$.  Therefore, the edge parameters at the
horizontal edges satisfy the edge gluing equations. This completes the
proofs of Theorem~\ref{Thm:hyperbolic} parts~\eqref{Itm:Trian}
and~\eqref{Itm:RightAngled}.

A positively oriented geometric triangulation implies
part~\eqref{Itm:Hyp}; see for example~\cite{choi}.

To prove the lower bound, we use the lower volume bound given by the
volume of a representation.  Because $(T^2\times I)-L$ admits a
hyperbolic structure, it admits a representation of the fundamental
group into ${\rm PSL}(2,\CC)$.  The volume of this representation is
$2\;\vol(P(L))$.  Francaviglia showed that the volume of such a
representation gives a lower bound on the volume of the complete
structure~\cite{francaviglia2}. The lower bound in
part~\eqref{Itm:Vol} follows.

Finally, the tetrahedra in $\T$ combine along every stellating edge to
give a hyperbolic ideal bipyramid over a face of $T_L$.  The upper
bound in part \eqref{Itm:Vol} follows from the fact that the maximal
volume of a complete ideal $n$--bipyramid is obtained uniquely by the
regular $n$--bipyramid \cite[Theorem~2.1]{Adams:BipyramidVolume}.

This completes the proof of Theorem~\ref{Thm:hyperbolic}.
\end{proof}

\bibliographystyle{amsplain} 
\bibliography{references}
\end{document}